\theoremstyle{plain}
    \newtheorem{theorem}{Theorem}[section]
    \newtheorem{prop}[theorem]{Proposition}
    \newtheorem{lemma}[theorem]{Lemma}
    \newtheorem{corollary}[theorem]{Corollary}
\newtheorem{subsec}[theorem]{}
    \newtheorem*{thma}{Theorem A}
    \newtheorem*{thmb}{Theorem B}
\theoremstyle{definition}
\newtheorem{defn}[theorem]{Definition}
\newtheorem{construction}[theorem]{Construction}
\newtheorem{example}[theorem]{Example}
    \newtheorem{examples}[theorem]{Examples}
   \theoremstyle{remark}
        \newtheorem{remark}[theorem]{Remark}
\renewcommand{\thefigure}{\arabic{section}.\arabic{theorem}}
\begin{document}
%
%
\newcommand{\hsp}{\hspace*{7 mm}}
\newcommand{\hs}{\hspace*{3 mm}}
\newcommand{\hsk}{\hspace*{-5 mm}}
\newcommand{\hsm}{\hspace*{3 mm}}
\newcommand{\hsn}{\hspace*{3 mm}}
\newcommand{\vsn}{\vspace*{2 mm}}
\newcommand{\vsneg}{\vspace{-1 mm}}
\newcommand{\vs}{\vspace{7 mm}}
\newcommand{\vsm}{\vspace{4 mm}}
\newenvironment{myeq}[1][]
{\stepcounter{theorem}\begin{equation}\tag{\thetheorem}{#1}}
{\end{equation}}
\newcommand{\mydiagram}[2][]
{\stepcounter{theorem}\begin{equation}
     \tag{\thetheorem}{#1}\vcenter{\xymatrix{#2}}\end{equation}}
\newcommand{\mysdiag}[2][]
{\stepcounter{theorem}\begin{equation}
     \tag{\thetheorem}{#1}\vcenter{\xymatrix@C=0.2em@R=0.8em{#2}}\end{equation}}
\newcommand{\mytdiag}[2][]
{\stepcounter{theorem}\begin{equation}
     \tag{\thetheorem}{#1}\vcenter{\xymatrix@C=0.8em@R=0.8em{#2}}\end{equation}}
\newcommand{\myvdiag}[2][]
{\stepcounter{theorem}\begin{equation}
     \tag{\thetheorem}{#1}\vcenter{\xymatrix@C=1.3em@R=0.8em{#2}}\end{equation}}
\newcommand{\myudiag}[2][]
{\stepcounter{theorem}\begin{equation}
     \tag{\thetheorem}{#1}\vcenter{\xymatrix@C=3.8em@R=0.8em{#2}}\end{equation}}
\newcommand{\myzdiag}[2][]
{\stepcounter{theorem}\begin{equation}
     \tag{\thetheorem}{#1}\vcenter{\xymatrix@C=2.3em@R=0.8em{#2}}\end{equation}}
\newcommand{\mytikz}[2][]
{\stepcounter{theorem}\begin{equation}    \tag{\thetheorem}{#1}\vcenter{\begin{center}\begin{tikzpicture}{#2}\end{tikzpicture}\end{center}}\end{equation}}
\renewcommand{\thefigure}{\arabic{section}.\arabic{theorem}}
\newenvironment{mysubsection}[2][]
{\begin{subsec}\begin{upshape}\begin{bfseries}{#2.}
\end{bfseries}{#1}}
{\end{upshape}\end{subsec}}
\newcommand{\sect}{\setcounter{theorem}{0}\section}
%
%
\newcommand{\cB}{\mathcal{B}}
\newcommand{\cC}{\mathcal{C}}
\newcommand{\mC}{\mathscr{C}}
\newcommand{\cDD}{\mathcal{D}}
\newcommand{\mD}{\mathscr{D}}
\newcommand{\cF}{F}
\newcommand{\cG}{G}
\newcommand{\ccH}{H}
\newcommand{\cK}{\mathcal{K}}
\newcommand{\ccL}{M}
\newcommand{\cM}{\mathcal{M}}
\newcommand{\mM}{\mathscr{M}}
\newcommand{\cP}{\mathcal{P}}
\newcommand{\mP}{\mathbf{P}}
\newcommand{\mT}{\mathcal{T}}
\newcommand{\bmT}{\bar{\mT}}
\newcommand{\mX}{\mathscr{X}}
\newcommand{\mY}{\mathscr{Y}}
\newcommand{\bHom}{\mathbf{Hom}}
\newcommand{\bL}{\mathbf{L}}
\newcommand{\bR}{\mathbf{R}}
\newcommand{\bX}{\mathbf{X}}
\newcommand{\bY}{\mathbf{Y}}
\newcommand{\ppp}[1]{[#1]}
\newcommand{\lra}[1]{\langle{#1}\rangle}
\newcommand{\hra}{\hookrightarrow}
%
%
\newcommand{\bk}{\mathbf{k}}
\newcommand{\pbk}{\ppp{\bk}}
\newcommand{\bl}{\mathbf{j}}
\newcommand{\pbl}{\ppp{\bl}}
\newcommand{\bm}{\mathbf{m}}
\newcommand{\pbm}{\ppp{\bm}}
\newcommand{\bn}{\mathbf{n}}
\newcommand{\pbn}{\ppp{\bn}}
\newcommand{\bnp}{\bn+\bo}
\newcommand{\br}{\mathbf{r}}
\newcommand{\bo}{\mathbf{1}}
\newcommand{\bbo}{\lra{\bo}}
\newcommand{\pbo}{\ppp{\bo}}
\newcommand{\btw}{\mathbf{2}}
\newcommand{\bth}{\mathbf{3}}
%
%
\newcommand{\xd}{x\sb{\bullet}}
\newcommand{\xu}{x\sp{\bullet}}
\newcommand{\zd}{z\sb{\bullet}}
\newcommand{\zu}{z\sp{\bullet}}
\newcommand{\wzu}{\widetilde{z}\sp{\bullet}}
%
%
\newcommand{\NN}{\mathbb{N}}
\newcommand{\RR}{\mathbb{R}}
\newcommand{\bottom}{\operatorname{bottom}}
\newcommand{\colim}{\operatorname{colim}}
\newcommand{\homm}{\operatorname{hom}}
\newcommand{\Hom}{\operatorname{Hom}}
\newcommand{\Id}{\operatorname{Id}}
\newcommand{\Image}{\operatorname{Im}}
\newcommand{\Iso}{\operatorname{Iso}}
\newcommand{\Ker}{\operatorname{Ker}}
\newcommand{\Map}{\operatorname{Map}}
\newcommand{\Ob}{\operatorname{Ob}}
\newcommand{\op}{\sp{\operatorname{op}}}
\newcommand{\Tot}{\operatorname{Tot}}
\newcommand{\tr}{\operatorname{tr}}
\newcommand{\sk}{\operatorname{sk}}
\newcommand{\Ho}{\operatorname{Ho}}
\newcommand{\Cat}{\mbox{\sf Cat}}
\newcommand{\sCat}{\mbox{\sf sCat}}
\newcommand{\DK}{\mbox{\sf DK}}
\newcommand{\Graph}{\mbox{\sf Graph}}
\newcommand{\SeCat}{\mbox{\sf SeCat}}
\newcommand{\Set}{\mbox{\sf Set}}
\newcommand{\sSet}{\mbox{\sf sSet}}
\newcommand{\stSet}{\mbox{\sf s}^{2}\mbox{\sf Set}}
\newcommand{\sSa}{\sSet\sb{\ast}}
\newcommand{\Sing}{\mbox{\sf Sing}}
\newcommand{\Kan}{\mbox{\sf Kan}}
\newcommand{\fd}{\mathfrak{d}}
\newcommand{\fB}{\mathfrak{B}}
\newcommand{\fC}{\mathfrak{C}}
\newcommand{\fU}{\mathfrak{U}}
\newcommand{\bDel}{\bar{\Delta}}
\newcommand{\pDel}{\sb{+}\Delta}
\newcommand{\pbDel}{\sb{+}\bDel}

%
\title{Spectral Sequences in $(\infty, 1)$-categories}
%
%
\author[D.~Blanc]{David Blanc}
\address{Department of Mathematics\\ University of Haifa\\ 3498838 Haifa\\ Israel}
\email{blanc@math.haifa.ac.il}
\author[N.J.~Meadows]{Nicholas J.\ Meadows}
\address{Department of Mathematics\\ University of Haifa\\ 3498838 Haifa\\ Israel}
\email{njmead81118@gmail.com}
\date{\today}
\subjclass[2020]{Primary: 55T05; \ secondary: 18N60, 55U40}
\keywords{Spectral sequence, $\infty$-category, differential, higher order operations, simplicial object}

\begin{abstract}
  We explain how to set up the homotopy spectral sequence of a (co)simplicial
  object in an $\infty$-category, with an emphasis on how to construct the
  differentials in a model-invariant manner.
\end{abstract}

\maketitle

\setcounter{section}{0}

%
%
\sect{Introduction}
\label{cint}

Spectral sequences, first discovered by Leray in the 1940's, are a basic
computational tool of algebraic topology, homological algebra, group theory, and many related fields. In Massey's description using exact couples (see \cite{MasE}),
they appear to be purely algebraic devices, but in fact there is always a
homotopical (or at least homological) aspect to the differentials.

The exact couple is ordinarily derived from a tower of (co)fibrations in a model category $\cM$ (e.g., a filtered chain complex), and if $\cM$ is stable (see \cite[Ch.\ 7]{HovM}), the whole spectral sequence, including the differentials, can be described in terms of the triangulated structure (see \cite{CFranH,ShipA} and compare \cite{JCohDS}).  Thus setting up a spectral sequence in a stable $\infty$-category is straightforward: all that is needed
is a sequence of morphisms and the existence of suitable (co)limits
(see \cite[Section 1.3]{Lurie2}) \ - \ although even in this case, describing the differentials explicitly requires some work (see \S \ref{sdsic}).

In this paper we study unstable spectral sequences, and specifically the homotopy spectral sequence of a simplicial or cosimplicial object in a general $\infty$-category, as originally defined in \cite{BFrieH} (or \cite{QuiS}) and \cite{BK}, respectively.

The classical unstable Adams spectral sequences of \cite{BCKQRSclM,BKanS}, the generalized version of \cite{BCMillU}, the Eilenberg-Moore spectral sequence (see \cite{RectS}), and Anderson's generalization of the latter in \cite{AndG}, are among the many examples of the cosimplicial version. Segal's homology spectral sequence (see \cite{SegC}), the Serre spectral sequence (see \cite{DreS}), and the van Kampen spectral sequence (see \cite{StoV}), are examples of the simplicial version. Note also that each differential in the stable Adams spectral sequence, for a finite complex $\bY$, agrees with the unstable version for $\Sigma^{N}\bY$ if $N$ is large enough, so our results apply there too.

Even though both types of spectral sequences are derived from an appropriate tower of fibrations, in the unstable case this is not useful in analyzing the differentials in terms of the original (co)simplicial object. For example, one can use a cosimplicial resolution of $\bY$ to identify the differentials in the unstable Adams spectral sequence for $\bY$ as certain higher cohomology operations, though this is not apparent if one considers the associated $\Tot$ tower (see \cite{BBSenH}, and compare \cite{BJTurHH} for a simplicial analogue).

Our overall purpose is to explain how the differentials in the two
kinds of unstable homotopy spectral sequences can be described
explicitly in terms of maps induced by the universal property of
certain (homotopy) colimits. This presents the differentials as
homotopy invariants \ -- \ of the (co)simplicial object $\xd$ (or $\xu$),
and of the $\infty$-category $X$ in which it lies. These in turn
can be characterized in a manner independent of the particular
model of $\infty$-categories we use.

Note that we think of the differentials here as `relations', in the sense of
\cite[\S 3]{BKaSQ} \ -- \ that is, as a multi-valued function defined on representatives in the $E^{1}$-page. We observe that from this point of view there are also versions of the theory that work without assuming (abelian) group structure (cf.\ \cite{BousH,BKanS,BFrieH}), but we shall not consider them here.

To understand our approach to the problem, assume we are given a simplicial object
$\xd$ in a pointed $\infty$-category $X$ with enough (co)limits, and a
cogroup object $y$ in $X$. If $X$ is a simplicially enriched category, say, then
$\xd$ need only be a homotopy coherent diagram \ - \ that is, a simplicial functor out of the Dwyer-Kan cofibrant replacement $\DK(\Delta\op)$ for the usual indexing category
$\Delta\op$ (see \S \ref{rmk1.7}).

We then construct the homotopy spectral sequence for $\Map_{X}(y, \xd)$ by means of the spiral tower of fibrations, as in \cite{Dwyer-Kan-Stover} (see \S \ref{csssec} below), with
$$
E^{2}\sb{n,p}~=~\pi_{n}\pi_{p}\Map_{X}(y, \xd)~\Longrightarrow~
\pi_{p+n}\Map_{X}(y, \colim \xd)~.
$$

The differentials in the spectral sequence can be interpreted as obstructions to
lifting a certain diagram from the homotopy category of $X$:

\begin{thma}
  Let $\lra{f}\in E^{1}\sb{n,p}=\pi_{0}\Map_{X}(\Sigma^{p}y,x_{n})$ in the spectral sequence above  be represented by $f:\Sigma^{p}y \rightarrow x_{n}$. Then $\lra{f}$ survives to $E^{r}\sb{n,p}$
for $r\geq 2$ if and only if we can find a homotopy coherent diagram $D:\DK(J\sb{r-1})\rightarrow X$ of the form
\mydiagram[\label{diagnk}]{
\Sigma^{p}y  \ar@/^1pc/[rr]^{0}_{\vdots}  \ar@/_1pc/[rr]_{0} \ar[dd]  && 0  \ar@/^1pc/[rr]_{\vdots}  \ar@/_1pc/[rr] \ar[dd] && 0 \ar[dd]   & \cdots \cdots & 0 \ar[dd] \\
 && && && \\
x_{n} \ar@/^1pc/[rr]^{d_{0}}_{\vdots}  \ar@/_1pc/[rr]_{d_{n}} && x_{n-1}  \ar@/^1pc/[rr]^{d_{0}}_{\vdots}  \ar@/_1pc/[rr]_{d_{n-1}} && x_{n-2} & \cdots \cdots & x_{n-r+1}
}
\end{thma}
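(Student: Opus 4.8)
The plan is to set up the proof as a standard obstruction-theoretic argument identifying the differentials in the spiral spectral sequence with partially-defined operations, following the template of \cite{Dwyer-Kan-Stover} and \cite{BBSenH} but carried out $\infty$-categorically. First I would recall the construction of the spiral tower of fibrations associated to $\xd$, and note that the $r$-th differential $d^{r}$ on $\langle f\rangle \in E^{1}_{n,p}$ is defined, in the usual exact-couple manner, by lifting $f$ successively through the tower: one represents $\langle f\rangle$ by a map into the $n$-th stage, checks that its image in each of the intermediate $E^{1}$-pages vanishes (these are exactly the conditions that $\langle f\rangle$ survives to $E^{2}, \dots, E^{r}$), and at each stage chooses a nullhomotopy, which produces a lift to the next Tot-stage. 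The key translation step is to reinterpret each such nullhomotopy, via the spiral exact sequence relating $\pi_{\ast}\pi_{\ast}\Map(y,\xd)$ to the homotopy of the partial totalizations, as the datum of an extension of the diagram $D$ by one more column. Concretely, the face maps $d_{0}, \dots, d_{n-k}: x_{n-k} \to x_{n-k-1}$ are part of the structure of $\xd$ as a homotopy coherent functor on $\DK(\Delta\op)$, and the indexing category $J_{r-1}$ is precisely the full subcategory of $\Delta\op$ (or a cofibrant model thereof) capturing the $r$ columns $x_{n}, x_{n-1}, \dots, x_{n-r+1}$ together with the extra object $\Sigma^{p}y$ mapping in; the homotopies $0 \Rightarrow 0$ in the top row of \eqref{diagnk} are the higher coherence data recording the successive nullhomotopies.

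Next I would prove the two implications separately. For the "only if" direction: assuming $\langle f\rangle$ survives to $E^{r}_{n,p}$, I would inductively construct the diagram $D$ one column at a time. The survival of $\langle f\rangle$ to $E^{k+1}$ means precisely that the partially-built diagram $D_{k}: \DK(J_{k-1}) \to X$ can be extended over $J_{k}$: the obstruction to this extension lives in the group computed by the $E^{1}$-term $\pi_{0}\Map_{X}(\Sigma^{p}y, x_{n-k})$, and it is identified — using the spiral exact sequence and the compatibility of the spiral tower with homotopy colimits — with the image of $\langle f\rangle$ under $d^{1} = \sum (-1)^{i} d_{i}$, which is precisely the condition controlling passage from $E^{k}$ to $E^{k+1}$. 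The homotopy coherence is handled by working in the model of $\infty$-categories fixed in the earlier sections and invoking the lifting properties of $\DK(-)$-cofibrant objects. For the "if" direction: given such a $D$, I would run the argument in reverse, extracting from the coherence data of $D$ a compatible system of nullhomotopies that assemble into a lift of $f$ through $r-1$ stages of the spiral tower, which by the definition of the exact-couple differentials shows $d^{1}\langle f\rangle = \dots = d^{r-1}\langle f\rangle = 0$, i.e. $\langle f\rangle$ survives to $E^{r}$.

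The main obstacle I anticipate is the bookkeeping in the translation between the spiral tower and the diagram $\eqref{diagnk}$ — specifically, checking that the indexing category $J_{r-1}$ and the particular shape of homotopies in the top row correctly encode the iterated nullhomotopies, and that the identification of obstruction groups with $E^{1}$-terms is natural with respect to the differentials. This requires care because the spiral exact sequence introduces a shift (the appearance of $\Sigma^{p}y$ rather than $y$ reflects the loop/suspension bookkeeping in the spiral long exact sequence), and one must verify that the degree conventions match: the $r-1$ columns of $\xd$ appearing in $D$ correspond to the $r-1$ stages of the tower one must lift through to reach $E^{r}$. A secondary, more technical point is model-invariance: since the statement is about homotopy coherent diagrams, I would either work in a fixed convenient model (e.g. simplicially enriched categories with $\DK(-)$ cofibrant replacement, as flagged in \S\ref{rmk1.7}) and then argue that the resulting notion of "survives with witness $D$" is preserved by the standard equivalences between models, or phrase everything quasi-categorically from the outset; I expect the former to be cleaner given the setup of the paper.
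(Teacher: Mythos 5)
Your overall strategy --- induct column by column, translating survival to each page of the spectral sequence into an extension of the diagram over one more column --- is indeed the shape of the paper's argument (Theorem \ref{thm3.5} is deduced from Theorem \ref{thm4.7}, which is proved by exactly such an induction). But as written the proposal has two genuine gaps. First, you identify the obstruction to extending $D_{k}$ over $J_{k}$ as ``the image of $\lra{f}$ under $d^{1}=\sum(-1)^{i}d_{i}$'', living in $\pi_{0}\Map_{X}(\Sigma^{p}y,x_{n-k})$. That is only correct for $k=1$. The obstruction controlling passage from $E^{k}$ to $E^{k+1}$ is the $k$-th differential $\fd^{k}\lra{f}$, which is represented by a map $\partial\mP^{k}\to\Map_{\mX}(\Sigma^{p}y,x_{n-k})$ out of the boundary of a $k$-permutahedron (Theorem \ref{thm4.7}), hence by a class in a \emph{higher} homotopy group of $\Map_{\mX}(\Sigma^{p}y,x_{n-k})$, not in $\pi_{0}$; it is a genuinely higher-order operation built not from $f$ alone but from the entire system of nullhomotopies $G_{j}$ and coherence homotopies accumulated in $D_{k}$. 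With your identification the inductive step would reduce every page to a condition on $d^{1}$, and the theorem would fail for $r\geq 3$.

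Second, your ``key translation step'' --- that extending $D$ by one column amounts to choosing a single nullhomotopy --- is asserted rather than proved, and it is precisely the technical content of the paper. Extending $D$ over $\DK(J_{r})$ requires extending it over the whole simplicial $\bHom$-set $\DK(\pbDel\op_{n-r,n}\times\bbo)((\pbn,0),(\ppp{\bn-\br},1))$, which is a disjoint union of triangulated $r$-permutahedra, one for \emph{each} composite $\theta$ of face maps (Proposition \ref{lema.3}), not a single cell. A priori this produces one obstruction class $[\mT_{\theta}]$ per component. The paper's Lemma \ref{lemb.1} shows that all components other than the one indexed by $d_{0}d_{1}\cdots d_{r}$ extend automatically, so that the single class $[\mT_{d_{0}\cdots d_{r}}]$ is the complete obstruction, and Corollary \ref{corb.2} (via reduction to the Reedy fibrant case of \S \ref{con4.2} and \S \ref{exam4.3}) identifies that class with the exact-couple differential $\fd^{r}\lra{f}$. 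Your proposal needs both inputs --- the permutahedral description of the mapping spaces of $\DK(\pbDel\op)$ and the automatic vanishing of the auxiliary obstructions --- for the ``if and only if'' to close; without them neither direction of the inductive step is established.
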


See Theorem \ref{thm3.5}.

Here $J\sb{r-1}=\pbDel\op_{n-r+1, n}
\times\bbo$ is the indexing category for the diagram
\eqref{diagnk} (see \S \ref{snac} below). Theorem A follows from:

\begin{thmb}
Suppose that we have a diagram $D:\DK(J\sb{r-1}) \rightarrow X$ as in \eqref{diagnk}. Then the differential $\fd^{r}\lra{f}$ is represented by
an element $\alpha \in \pi_{p+r}\Map_{X}(y, x_{n-r})$, constructed by the universal property of a certain colimit derived from the diagram $D$. In particular, $\alpha=0$ if and only if $D$ extends to a diagram $\DK(J\sb{r}) \rightarrow X$ as above.
\end{thmb}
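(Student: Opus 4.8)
The plan is to unwind the spiral tower of fibrations of \S\ref{csssec} (following \cite{Dwyer-Kan-Stover}) that underlies the spectral sequence for $\Map_X(y,\xd)$, to recognize its lifting data as the coherent diagram $D$, and then to identify the connecting map of the tower with a map produced by the universal property of a homotopy colimit built from $D$. First I would recall the precise shape of the spiral tower $\{W_m\}$: each $W_m$ is assembled from the $m$-skeleton of $\xd$, the map $W_m\to W_{m-1}$ is a fibration, and — since $\xd$ takes values in a pointed $\infty$-category — its fibre is identified, up to the spiral shift, with a mapping space out of $x_m$ relative to its $m$-th latching object, hence, after applying $\Map_X(y,-)$ and using the cogroup structure on $y$, with a shifted copy of $\Map_X(y,x_m)$. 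In these terms a representative $f\colon\Sigma^p y\to x_n$ of $\langle f\rangle\in E^1_{n,p}$ gives a class in $\pi_p$ of the fibre over stage $n$; surviving to $E^r$ means this class lifts, compatibly, up through stage $n-r+1$; and $\fd^r\langle f\rangle$ is the image of such a lift under the connecting map into the homotopy of the fibre over stage $n-r$, which the spiral shift places in $\pi_{p+r}\Map_X(y,x_{n-r})$.

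Second, I would show — this is in effect the proof of Theorem A — that a compatible system of such lifts through stage $n-r+1$ is exactly the data of a homotopy coherent diagram $D\colon\DK(J_{r-1})\to X$ of the form \eqref{diagnk}: the all-zero top row, together with the coherence homotopies carried by $\DK(J_{r-1})$, encodes the tower of nullhomotopies underlying the lift (at the first level nullhomotopies $d_i\circ f\simeq 0$, then a secondary coherence among these when pushed down by the next layer of face maps, and so on through $r-1$ layers), and the indexing category $J_{r-1}=\pbDel\op_{n-r+1,n}\times\bbo$ is built precisely to carry it. Given $D$, I would form the homotopy colimit of the subdiagram consisting of its top row (the truncated semisimplicial object $\Sigma^p y,0,\dots,0$ with all structure maps zero); the cogroup structure on $y$ realizes this iterated coning-off as an iterated suspension and identifies the homotopy colimit with $\Sigma^{p+r}y$. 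The bottom row of $D$, the face maps $d_j\colon x_{n-r+1}\to x_{n-r}$ of $\xd$, and the coherence homotopies of $D$ then assemble, by the universal property of that homotopy colimit, into a canonical map $\alpha\colon\Sigma^{p+r}y\to x_{n-r}$, i.e.\ an element $\alpha\in\pi_{p+r}\Map_X(y,x_{n-r})$; a routine check shows that varying the coherence data within $D$ alters $\alpha$ only within the indeterminacy subgroup inherent to $\fd^r$ viewed as a relation.

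Third, I would prove that $\alpha$ represents $\fd^r\langle f\rangle$ by comparing this colimit construction, level by level, with the connecting map of the spiral tower: under the identification of each tower stage with a skeleton of $\xd$, the fibre sequences of the tower are the long exact sequences of the homotopy-cofibre sequences of the latching maps of $\xd$, so the tower boundary map is literally the map induced on mapping spaces by such a cofibre sequence, and matching these against the iterated cone of the previous step identifies the two elements. The final clause is then formal: a nullhomotopy of $\alpha\colon\Sigma^{p+r}y\to x_{n-r}$ is precisely the additional datum — a new vertical zero-map into the adjoined bottom-row object $x_{n-r}$, with compatible coherences — required to extend $D$ across the inclusion $\DK(J_{r-1})\hookrightarrow\DK(J_r)$, and conversely every such extension restricts to a nullhomotopy of $\alpha$; this is immediate from the universal property of the homotopy colimit once the identification of the preceding step is available. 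One can equivalently organize the whole argument inductively on $r$, the passage from $\DK(J_{r-1})$ to $\DK(J_r)$ being a single step up the spiral tower whose obstruction class is exactly $\alpha$.

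The main obstacle is the comparison in the third step: making rigorous the identification of the abstractly defined connecting map of the spiral tower with the concretely defined colimit map, with honest bookkeeping of the spiral degree shift and of basepoints, and with $\fd^r$ treated throughout as a relation carrying indeterminacy rather than a single map. A secondary difficulty, already in the second step, is the identification of the relevant homotopy colimit with an iterated suspension of $y$ — this is exactly the point at which the cogroup hypothesis on $y$ and the pointedness of $X$ enter essentially.
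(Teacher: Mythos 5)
Your overall strategy --- read off $\alpha$ from the universal property of a homotopy colimit built out of the zero-objects in \eqref{diagnk}, then match it against the boundary map of the tower defining the spectral sequence --- is the paper's strategy in outline, but two essential technical inputs are missing, and without them the argument does not close. First, the colimit you propose (the homotopy colimit of the top row of $D$, an iterated suspension of $\Sigma^{p}y$) is not by itself enough to \emph{assemble} the map $\alpha$: the data you must glue consists of $f$, the nullhomotopies $G_{j}$, the coherence homotopies internal to $\xd$, and the fill-ins from the previous stage, and one has to know the precise combinatorial shape in which these fit together before one can invoke any universal property. In the paper this shape is supplied by Proposition \ref{lema.3}: the components of the simplicial mapping spaces of $\DK(\pbDel\op)$ are triangulated permutahedra, the facets of $\partial\mP^{r}$ correspond to factorizations of $d_{0}\cdots d_{r}$ (Lemma \ref{lema.2}), and the construction of \S \ref{con4.5} places each piece of data on its facet. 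The colimit actually used in Corollary \ref{cor4.8} is therefore indexed by the face category $\cF_{\mP^{r}}$ of the permutahedron, not by the top row of $D$; your assertion that ``the coherence homotopies of $D$ then assemble, by the universal property of that homotopy colimit, into a canonical map $\alpha$'' is precisely the point that needs the permutahedral combinatorics, and is asserted rather than proved.

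Second, the ``$\alpha=0$ iff $D$ extends to $\DK(J_{r})$'' clause is not formal. Extending $D$ requires filling \emph{every} component of $\DK(J_{r})((\pbn,0),(\ppp{\bn-\br},1))$, i.e.\ one $r$-permutahedron for each $\theta\colon\ppp{\bnp}\to\ppp{\bn-\br}$, whereas $\alpha$ only records the component $\theta=d_{0}\cdots d_{r}$. The paper needs Lemma \ref{lemb.1} (proved by reduction to the Reedy fibrant case, where $G_{i}=0$ for $i>0$ and the $\mT_{\theta}$ for the other $\theta$ vanish or extend trivially) to show that this single class is the complete obstruction; your proposal has no analogue of this step. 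Finally, you explicitly defer the identification of $\alpha$ with $\fd^{r}\lra{f}$ (``the main obstacle''); in the paper this is Corollary \ref{corb.2}, done by induction on $r$ after reducing to the Reedy fibrant/strict case and comparing with the explicit lifting description of \S \ref{con4.2}--\S \ref{exam4.3}. As it stands, the proposal is a correct plan with the two load-bearing lemmas and the central comparison left unproved.
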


See Theorem \ref{thm4.7} and Corollary \ref{cor4.8}.

By Remark \ref{rmk4.9}, we thus obtain a model-independent (and in particular, homotopy invariant) description of the differentials in the homotopy spectral sequence of a
simplicial object in an $\infty$-category.

\begin{mysubsection}{Convergence}
In this paper we are concerned only with the description of the differentials in the
spectral sequence of a (co)simplicial object, and will not discuss its convergence.
It should be noted, however, that if $y$ is a compact object in an $\infty$-category $X$
in the sense of \cite[\S 5.3.4]{Lurie} (so that $\Map_{X}(y, -)$ commutes with filtered
colimits), the spectral sequence associated to $y$ and a given
simplicial object $\xd$ in $X$ (see \S \ref{con2.6}) can be identified with the usual
spectral sequence of a simplicial space by Theorem \ref{thm3.2} below, and hence always
converges strongly. 

On the other hand, the spectral sequence of a cosimplicial space does not always converge
(see \cite[IX,\S 4]{BK} or \cite[VI]{GJ2}, especially Lemma 2.20), so the spectral
sequence of Section \ref{cdhssco} below need not converge in general.
\end{mysubsection}

\begin{mysubsection}{Notation and conventions}
\label{snac}
The category of sets is denoted by $\Set$, that of small categories by $\Cat$, and that of directed graphs by $\Graph$.

The category of non-empty finite ordered sets is denoted by $\Delta$, while that of all
finite ordered sets is denoted by $\bDel$. We denote the ordered
set $(0<1<\dotsc<n)$ by $\pbn$ (with $\ppp{-\bo}$ denoting the empty set in $\bDel$, by analogy). At times it will be convenient to
denote the one-arrow category (isomorphic to $\pbo$) by $\bbo$.
A \emph{cosimplicial} object in a category $\cC$ is thus a functor $\xu\colon\Delta\to\cC$,
a \emph{simplicial} object in $\cC$ is a functor $\xd\colon\Delta\op\to\cC$, and an
\emph{augmented} simplicial object in $\cC$ is a functor $\bDel\op\to\cC$.
As usual, we write $x_{n}$ for $\xd(\pbn)$ and $x^{n}$ for $\xu(\pbn)$.

The full subcategory of $\bDel$ generated by the monomorphisms
is denoted by $\pbDel$, so a functor $\pbDel\op\to\cC$ is a \emph{restricted augmented}
simplicial object (without degeneracies). Similarly, given integers $0\leq k\leq n$,
we denote by $\bDel_{k, n}$ the full subcategory of $\bDel$ whose
objects $\pbm$ satisfy  $k \le m \le n$, and by $\pbDel_{n}$
(or $\pbDel_{-1,n}$) the full subcategory of $\pbDel$ with $m \le n$.

The category of simplicial sets is denoted by $\sSet$, and that of pointed
simplicial sets by $\sSa$; an object in either is \emph{fibrant} if it is a
Kan complex (see \cite[Chapter I]{GJ2}), and $\Kan$ denotes the full subcategory of fibrant objects in $\sSet$.

A small category enriched in $\sSet$ will be called a \emph{simplicial category} \ -- \
equivalently, this can be thought of as a simplicial object in $\Cat$ for which
the face and degeneracy functors are the identity on objects. The category of
small simplicial categories will be denoted by $\sCat$, and we shall denote them by
$\mX$, $\mY$, and so on. For $\mX\in\sCat$, we write $\mX(a, b)$ 
for the simplicial set of morphisms from $a$ to $b$ (called the simplicial \emph{$\bHom$-set}).
\end{mysubsection}

\begin{mysubsection}{Outline}
\label{sorg}
Section \ref{cic} provides some background on $\infty$-categories
and in particular their axiomatic characterization. Section
\ref{cssso} gives an abstract description of the spectral sequence
of a simplicial object in such an $\infty$-category, mimicking the
construction in a simplicially enriched category.
In preparation for our main result, Section \ref{cdvho} analyzes the
differentials in the spectral sequence of a (strict) simplicial space,
and Section \ref{cdkrd} describes the Dwyer-Kan resolution of the restricted simplex
category $\pbDel$.
Section \ref{cdsscso} is the technical heart of the paper, describing the
differentials in the spectral sequence of a coherent simplicial object in
a simplicial category. In Section \ref{cqcic} we provide some additional
background on the quasi-category model of $\infty$-categories,
used in Section \ref{cdssqc} to explain the form taken by
differentials for the spectral sequence of a simplicial object in
a quasi-category.  Finally, in Section \ref{cdhssco} we briefly
address the dual case of the homotopy spectral sequence of a
cosimplicial object in an $\infty$-category.

\end{mysubsection}

\begin{mysubsection}{Acknowledgements}
  The research of first author was partially supported by Israel Science Foundation
  grant 770/16, and that of the second author by a Zuckerman Postdoctoral Fellowship.
\end{mysubsection}

%
%
\sect{Background on $\infty$-Categories}
\label{cic}

We begin with a rough axiomatization of the concept of $\infty$-categories, which covers most of the existing models such as simplicial categories (\cite{Bergner1}), quasi-categories (\cite{Lurie}), Segal categories (\cite{SimpsonBook}) and complete Segal spaces (\cite{Rezk}).

\begin{defn}\label{def1.1}
A \emph{model of} $\infty$-\emph{category theory} consists of the following data:

\begin{enumerate}
\renewcommand{\labelenumi}{(\arabic{enumi})}
\item A category $\cC$ and a full subcategory $\cC_{0}$ containing the initial object. The objects of $\cC_{0}$ are called $\infty$-\emph{categories}, and the morphisms are called \emph{functors of $\infty$-categories}.
\item A pair of adjoint functors $\cP\colon\cC_{0} \rightleftarrows \Cat\colon\cB$, with $\cB$ fully faithful. The left adjoint is the \emph{homotopy category} functor and the right adjoint is the \emph{nerve}.
\item A functor $\Ob\colon\cC\rightarrow \Set$, that assigns an $\infty$-category its set of objects.
\item If $X$ is an $\infty$-category, for each $x, y \in \Ob(X)$, we have a Kan complex of maps $\Map_{X}(x, y)$, with $\cP(x, y) = \pi_{0}\Map_{X}(x, y)$.
\item For each $x, y, z \in\Ob(X)$, we have a composition operation
$$
c_{x, y, z}\colon\Map_{X}(x, y) \times \Map_{X}(y, z) \rightarrow \Map_{X}(x, z),
$$
well defined and associative up to homotopy.
\item For each $x\in \Ob(X)$ we have an \emph{identity morphism}
  $\ast \xrightarrow{I_{x}} \Map_{X}(x, x) $, such that for each $y \in \Ob(X)$, both maps
$(\Map_{X}(y, x)\times\ast\xrightarrow{c_{y, x, x}\circ (\Id \times I_{x})} \Map_{X}(y, x))$
and $(\ast \times \Map_{X}(x, y)) \xrightarrow{c_{x, x, y} \circ ( I_{x} \times\Id)} \Map_{X}(x, y)$ are homotopic to the identity.
\item For each functor $F\colon X \rightarrow Y$, and $x, y \in X$, we have an induced map
$$
\Map_{X}(x, y) \rightarrow \Map_{Y}(F(x), F(y))
$$
which respects the  composition operation up to homotopy.
\end{enumerate}
\end{defn}

\begin{defn}\label{def1.2}
We call the vertices of $\Map_{X}(x, y)$  the \emph{morphisms} (or \emph{maps}) between $x$ and $y$. A $1$-simplex $a\in\Map_{X}(x, y)\sb{1}$ is called a \emph{homotopy} between $d_{0}(a)$ and $d_{1}(b)$, and we say that $d_{0}(a) \sim d_{1}(a)$ are \emph{homotopic}.

 A $2$-simplex $b$ of $\Map_{X}(x, y)$ is thought of as a homotopy between homotopies $d_{2}(b) \circ d_{0}(b) \sim d_{1}(b)$. More generally, the higher dimensional simplices can be thought of as higher homotopies relating the homotopies expressed by its faces one level below.
\end{defn}

\begin{defn}\label{def1.3}
We say that a functor $F\colon X \rightarrow Y$ of $\infty$-categories is an \emph{equivalence} if it induces weak equivalences of mapping spaces and  it induces an equivalence of homotopy categories $\cP(X) \rightarrow \cP(Y)$.
\end{defn}

\begin{mysubsection}{Simplicial categories}\label{exam1.5}
  The first model of $\infty$-category theory is the category of $\cC=\sCat$ of (small) simplicial categories. Here $\Ob\colon\cC\to\Set$ is the usual object functor, $\Map_{\mX}(a, b)=\mX(a, b)$ (see \S \ref{snac}), with the usual composition. Note that by definition of the morphisms in $\sCat$, a functor $F\colon\mX \rightarrow\mY$ induces a map of simplicial sets $\mX(a, b) \rightarrow\mY(a, b)$.

The nerve functor regards an ordinary category as a simplicial category with discrete simplicial $\bHom$-sets. The homotopy category $\cP(\mX)$ has object set $\Ob(\mX)$ and $\cP(\mX)(a, b):=\pi_{0}\Map_{\mX}(a, b)$.

Bergner showed that there is a model structure on the category of simplicial categories in which the weak equivalences are the \emph{Dwyer-Kan equivalences}: that is, maps
$f\colon\mX \rightarrow \mY$ inducing weak equivalences $\mX(a, b) \rightarrow \mY(a, b)$
in $\sSet$ for each $a,b\in\Ob\mX$, with $\cP(\mX) \rightarrow \cP(\mY)$ essentially surjective.

The fibrations are maps that induce Kan fibrations on simplicial $\bHom$-sets and induce isofibrations $\cP(\mX) \rightarrow \cP(\mY)$ (see \cite{Bergner1}),
and $\cC_{0}$ is the full subcategory of fibrant objects (that is, simplicial categories with fibrant simplicial $\bHom$-sets). The weak equivalences correspond to equivalences of $\infty$-categories in the sense of \S \ref{def1.3}. Thus, we can think of the above model structure as a model for $\infty$-category theory in the sense of \S \ref{def1.1}.
\end{mysubsection}

\begin{defn}\label{dfgf}
  The forgetful functor $U\colon\Cat\rightarrow \Graph$ from the category of small categories to the category of directed graphs has a left adjoint $F\colon\Graph\rightarrow \Cat$, the \emph{free category functor}. This pair of adjoint functors defines a comonad $FU\colon\Cat \rightarrow \Cat$ and hence a simplicial resolution
$\DK(C) \rightarrow C$ with $\DK(C)_{n} = (FU)^{n+1}C$. We call this the \emph{free resolution} of $C$.

If we think of a simplicially enriched category $\mX$ as a simplicial object in $\Cat$
(see \S \ref{snac}), we may define the \emph{Dwyer-Kan} resolution of $\mX$ by
$\DK(\mX)_{n} := \DK(\mX_{n})$. This serves as a functorial cofibrant replacement of $\mX$ in the Bergner model structure.
\end{defn}

\begin{remark}\label{rmk1.7}
The classical theory of homotopy coherence (see \cite{BVogHI} or \cite[Ch.\ IX]{GJ2}) was concerned with the extra data needed to lift a homotopy commutative diagram
  $I \rightarrow \Ho(\cM)$, in a simplicial model category $\cM$ to a strict diagram. It turns out that to obtain such a lift, it suffices to construct a simplicial functor
$\DK(I) \rightarrow \mM$, where $\mM$ is the simplicial enrichment associated to $\cM$. This can then be replaced by an equivalent strict diagram by \cite{Dwyer-Kan-Smith} (compare \cite[Theorem 4.49]{BVogHI}).
\end{remark}

\begin{defn}\label{def1.8}
Let $X$ be an $\infty$-category. A \emph{strict model} for $X$ is a fibrant simplicial category $\mX$
such that:
\begin{enumerate}
\item{$\Ob(X) = \Ob(\mX)$.}
\item{$\mX(x, y)$ is homotopy equivalent to $\Map_{X}(x, y)$ for each $x, y \in \Ob(X)$.}
\item{Composition in $\mX$ can be identified with composition in $X$. }
\end{enumerate}
\end{defn}

\begin{remark}\label{rmk1.9}
In \cite[\S 4]{Toen}, To{\"{e}}n gave an axiomatization of $\infty$-categories as the fibrant objects in a model category satisfying certain axioms. He then showed (see \cite[Theorem 5.1]{Toen}) that each model category $\cM$ satisfying these axioms admits a Quillen equivalence
$$
q_{M}\colon\cM \rightleftarrows \stSet\colon r_{M}
$$
to the complete Segal model structure of \cite{Rezk}.
It was shown in \cite{BERGNER3} that there is also a zig-zag of Quillen equivalences
$$
\sCat\leftrightarrows \SeCat_{f} \rightleftarrows \stSet
$$
where the middle category denotes the category of Segal precategories equipped with an appropriate model structure. The left adjoint of the first equivalence is given by a functor $F_{0}$ which regards a simplicial category as a bisimplicial set `strictly local' with respect to a certain map $\phi$ (see \cite[\S 8]{BERGNER3}). The left adjoint of the second Quillen equivalence is given by the functor $R\colon\stSet \rightarrow\SeCat_{f}$ which `discretizes' the degree zero part of a simplicial space (see \cite[\S 6]{BERGNER3}).

Thus, the derived functor $E=\bL F_{0} \circ \bR R \circ \bL q_{M} $ provides a reasonable definition of a functorial strict model of an $\infty$-category. Indeed, for the main existing models of $(\infty, 1)$-category theory \ -- \ in particular, for quasi-categories (see \S \ref{sqcic}) \ -- \  the object $EX$ will be a strict model of the $\infty$-category $X$ in the sense of \S \ref{def1.8}.
\end{remark}

\begin{defn}\label{def2.1}
Suppose that $X$ is an $\infty$-category. We call an object $x \in X$ \emph{initial}
if for each $y \in \Ob(X)$, the mapping space $\Map_{X}(x, y)$ is weakly contractible.
We have an evident dual notion of \emph{terminal} object. A \emph{zero object} is
one which is both initial and terminal.
A map of $\Map_{X}(x, y)_{0}$ which factors through a zero object is called a \emph{zero map}.

Thus $f\colon x \rightarrow y$ is a zero map if for each $z \in \Ob(X)$,
both pre-composition $(-) \circ f\colon\Map_{X}(y, z) \rightarrow \Map_{X}(x, z)$ and post-composition $f \circ (-)\colon\Map_{X}(z, x) \rightarrow \Map_{X}( x, y)$ with $f$
are nullhomotopic.

Moreover, any map homotopic (\S \ref{def1.2}) to a zero map is also such, and any two zero maps between $f, g$ are necessarily homotopic.
\end{defn}

We assume from here on that our chosen model of $\infty$-categories has a \emph{join functor } $(-)\ast(-) \colon\cC_{0} \times \cC_{0}$, such that
$\Ob(X\ast Y) = \Ob(Y) \cup \Ob(X)$ and
$$
\Map_{X\ast Y}(x, y) \sim
\left\{
    \begin{array}{ll}
        \Map_{X}(x, y)  & \mbox{if } x,y \in \Ob(X) \\
        \Map_{Y}(x, y)  & \mbox{if } x,y \in \Ob(Y) \\
        \emptyset & \mbox{if } x \in \Ob(Y), y \in \Ob(X) \\
        0 & \mbox{if } x \in \Ob(Y), y \in \Ob(X) \\
    \end{array}
\right.
$$

\begin{defn}\label{def2.2}
Suppose that $F\colon K \rightarrow X$ is a functor of $\infty$-categories. We write $X_{/F}$ for the \emph{slice} $\infty$-category, defined by the universal property that
$\homm(Z, X) =\homm_{F}(K\ast Z, X)$,
where $\hom_{F}(K\ast Z, X)$ is the set of all morphisms such that the composite
$K\ast i \rightarrow K\ast Z \rightarrow X$ is $F$ (here $i$ is the initial object in $\cC_{0}$).

We have an evident dual notion of \emph{coslice} $\infty$-category.
\end{defn}

\begin{defn}\label{def2.3}
A \emph{limit} of a functor $F$ of $\infty$-categories is a terminal object in $X_{/F}$. Dually, the \emph{colimit} of $F$ is initial in the coslice $\infty$-category.
\end{defn}

\begin{defn}\label{def2.4}
A \emph{simplicial object} in an $\infty$-category $X$ is a functor
$ \xd\colon\cB(\Delta\op) \rightarrow X$.  We have a natural tensoring of simplicial sets $K$ over $\xd$, given by the formula:
$$
\xd[K] = \underset{\longleftarrow}{\lim} (\cB(\Delta\op_{/K}) \xrightarrow{p} \cB(\Delta\op) \rightarrow X),
$$
where $\Delta_{/K}\op$ is the simplex category of $K$ (see \cite[I.2]{GJ2}), and $p$ is the projection.
\end{defn}

\begin{defn}
We will call an object $y$ of an $\infty$-category $X$ a \emph{cogroup object} if $\cP(X)(y, x)$ has a group structure, natural in $x \in \Ob(X)$.
\end{defn}

%
%
\sect{The spectral sequence of a simplicial object in an $\infty$-category}
\label{cssso}

As Dwyer, Kan and Stover showed in \cite{Dwyer-Kan-Stover}, the homotopy spectral
sequence of a simplicial space (see \cite[Theorem B.5]{BFrieH}), originally constructed by Quillen in \cite{QuiS} for a bisimplicial group, can be derived from a suitable tower of fibrations, which will allow a formal translation to the $\infty$-category setting.  We start by recalling their construction:

\begin{mysubsection}{The spiral spectral sequence of a simplicial space}
\label{csssec}
Suppose that $D_{\bullet}\colon\Delta\op \rightarrow \sSa$ is a pointed simplicial space.  For each $n\in\NN$, write $Z_{n}(D_{\bullet}):= D_{\bullet}[\Delta^{n}/ \partial \Delta^{n}], C_{n}(D_{\bullet}) := D_{\bullet}[\Delta^{n}/ \Lambda^{n}_{0}]$,
where $D_{\bullet}[K]$ denote the usual tensoring of simplicial spaces over a simplicial set $K$.

We have homotopy fibre sequences
\begin{myeq}\label{eqfibseq}
Z_{n}(D_{\bullet})~\xrightarrow{i}~C_{n}(D_{\bullet})~
  \xrightarrow{\bar{\partial}}~Z_{n-1}(D_{\bullet})~.
\end{myeq}
\end{mysubsection}

\begin{defn}\label{def3.1}
The \emph{spiral spectral sequence} associated to a simplicial space $D_{\bullet}$ is the spectral sequence associated to the exact couple
$$
\xymatrix
{
\oplus_{n \in \NN} \pi_{*}Z_{n}(D_{\bullet}) \ar[dr]_{i_{*}}  && \ar[ll]_{\delta} \oplus_{n \in \NN} \pi_{*}Z_{n}(D_{\bullet}) \\
& \oplus_{n \in\NN} \pi_{*}C_{n}(D_{\bullet}) \ar[ur]_{\bar{\partial}^{*}} &
}
$$
\end{defn}

\begin{theorem}\label{thm3.2} (see \cite[Section 8]{Dwyer-Kan-Stover})
From the $E_{2}$-term onward, the spiral spectral sequence a simplicial space can be identified with the homotopy spectral sequence of Bousfield-Friedlander. 
\end{theorem}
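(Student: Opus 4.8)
The plan is to exhibit an explicit isomorphism of spectral sequences from the $E_2$-page onward by matching the spiral exact couple with the exact couple that Bousfield--Friedlander derive from the $\Tot$-tower (or equivalently the skeletal filtration) of $D_\bullet$. First I would recall that for a pointed simplicial space $D_\bullet$, Bousfield--Friedlander set up their spectral sequence via the tower of fibrations obtained by applying $\operatorname{Tot}$ to the Reedy-fibrant replacement of the associated cosimplicial space $n\mapsto \operatorname{Map}(\Delta^n, D_\bullet)$, or dually via the skeletal filtration $\operatorname{sk}_0 D_\bullet \to \operatorname{sk}_1 D_\bullet \to \cdots$; in either case the $E_2$-term is $\pi_n\pi_p D_\bullet$ (homotopy of the Moore/normalized complex of the simplicial group $\pi_p D_\bullet$), converging to $\pi_{p+n}|D_\bullet|$. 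The key structural input is that the homotopy fibre sequences \eqref{eqfibseq}, $Z_n(D_\bullet)\xrightarrow{i} C_n(D_\bullet)\xrightarrow{\bar\partial} Z_{n-1}(D_\bullet)$, are exactly the layers that compute the associated graded of this filtration, because $Z_n$ is (up to homotopy) the $n$-th "cycles" space $D_\bullet[\Delta^n/\partial\Delta^n]$ and $C_n = D_\bullet[\Delta^n/\Lambda^n_0]$ is contractible onto $D_\bullet[\Delta^{n-1}]$-type data, so $\bar\partial$ plays the role of the differential $\sum(-1)^i d_i$ in the normalized chain complex.

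Next I would carry out the comparison of exact couples in stages. Step one: identify, for each $n$, the long exact sequence in $\pi_*$ of \eqref{eqfibseq} with the relevant portion of the long exact sequence of the pair $(\operatorname{sk}_n D_\bullet, \operatorname{sk}_{n-1} D_\bullet)$ — this uses the standard fact that $\operatorname{sk}_n/\operatorname{sk}_{n-1} \simeq D_n \wedge (\Delta^n/\partial\Delta^n)/(\text{degeneracies})$, i.e.\ the normalized $n$-th term smashed with an $n$-sphere, which matches $Z_n(D_\bullet)$ after accounting for the suspension. Step two: assemble these over all $n$ to get a map of exact couples, where $\delta$ in Definition \ref{def3.1} corresponds to the connecting map in the skeletal filtration and $i_*$, $\bar\partial^*$ to the structure maps. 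Step three: check that this map of exact couples is an isomorphism on the $D$-term and $E$-term from page $2$ on — it need not be an isomorphism on $E_1$ because the spiral $E_1 = \pi_* Z_n$ is the unnormalized object while Bousfield--Friedlander's $E_1$ is typically normalized, but passing to $E_2$ kills exactly the degenerate part (the "extra" homotopy coming from the spiral's characteristic shift), so the two $E_2$-pages agree and the induced map is then an isomorphism of spectral sequences by the usual comparison theorem for exact couples.

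The main obstacle is the bookkeeping around the \emph{spiral} nature of the exact couple: unlike an ordinary filtration spectral sequence, the spiral couple has the $\pi_* Z_n$ appearing with a degree shift (the "$\pi_n\pi_p$ vs $\pi_{n+p}$" reindexing), and the differential $\partial^r$ changes the internal degree in a way that only stabilizes at $E_2$. Concretely, one must verify that the discrepancy between $\pi_* Z_n(D_\bullet)$ and the honest homotopy groups of the skeletal layers is precisely the contribution that is annihilated when one passes from $d^1$-homology to $E_2$; this is where the Dwyer--Kan--Stover analysis (their Section 8) does the real work, and I would lean on it rather than reprove it. Once that identification of $E_2$-terms is in hand, the remaining steps are formal: any morphism of exact couples inducing an isomorphism on $E_2$ induces an isomorphism of the derived spectral sequences from $E_2$ onward, including on differentials and on the filtration of the abutment $\pi_*|D_\bullet|$, and the abutments are identified by the standard colimit/geometric-realization comparison.
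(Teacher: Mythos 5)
The paper gives no proof of Theorem \ref{thm3.2}: it is quoted directly from \cite[Section 8]{Dwyer-Kan-Stover}, so there is no internal argument to measure yours against. Your outline has the right general shape and you correctly locate the difficulty in the spiral degree shift, but Step one, taken at face value, would fail. The object $Z_n(D_\bullet)=D_\bullet[\Delta^n/\partial\Delta^n]$ is a \emph{limit}-type construction --- the homotopy fibre of $x_n\to M_n(D_\bullet)$, i.e.\ Moore cycles --- and likewise $C_n(D_\bullet)$ is Moore chains (cf.\ Lemma \ref{lem4.1}), whereas $\sk_n|D_\bullet|/\sk_{n-1}|D_\bullet|$ is a \emph{colimit}-type layer, the normalized $D_n$ smashed with $S^n$. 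The groups $\pi_p Z_n(D_\bullet)$ (the bigraded homotopy groups of Dwyer--Kan--Stover) and the homotopy groups of the skeletal layers are not isomorphic; they are related only through the spiral exact sequence, which is a long exact sequence with a degree shift, not a levelwise identification of long exact sequences. Consequently there is no map of exact couples of the kind you propose to assemble in Step two, and the comparison theorem you invoke in Step three has nothing to be applied to. (Your diagnosis in the second paragraph that the $E_1$-discrepancy is a normalized-versus-unnormalized issue is also not right: the Moore chains $C_n\pi_p$ already compute the correct $E_2$, and the genuine discrepancy is the spiral shift between cycle-based and layer-based homotopy groups.)

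You acknowledge this and explicitly defer the bridging of that gap to \cite[Section 8]{Dwyer-Kan-Stover}, so in the end your argument reduces to the same citation the paper makes, with the intermediate steps serving as heuristics rather than proof. That is a reasonable gloss on why the theorem should hold, but if you want an actual argument along these lines you would need to route both spectral sequences through the spiral exact sequence (or a common third spectral sequence) to identify the $E_2$-terms and differentials, rather than asserting a direct isomorphism of the underlying exact couples.
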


\begin{defn}\label{def2.7}
Suppose that $\mX$ is a fibrant simplicial category. A \emph{homotopical limit} of a map of simplicial categories $f\colon K \rightarrow \mX$ is an object $b$ of $\mX$ together with a collection of maps
$$
\{\eta_{c}\colon b \rightarrow f(c) \}_{c \in \Ob(K)}
$$
such that for each $a \in \Ob(\mX)$ the diagram
$$
\{ \Map_{\mX}(a, b) \rightarrow \Map_{\mX}(a, f(c)) \}_{c \in K}
$$
exhibits $\Map_{\mX}(a, b)$ as a homotopy limit of $\Map_{\mX}(a, -) \circ K$. We have an evident dual notion of \emph{homotopical colimit}. See \cite{DHKSmitM} for further
details.
\end{defn}

\begin{remark}\label{rmk2.8}
For any simplicial category $\mX$, let $\sSet^{\mX}$ denote the category of maps of simplicial categories (where $\sSet$ has its usual simplicial enrichment). As discussed in \cite[A.3.3.13]{Lurie}, the enriched Yoneda embedding identifies homotopical limits in $\mX$ (thought of as an $\infty$-category) with homotopy limits in $\sSet^{\mX}$ equipped with the injective model structure of \cite[A.2.8.2]{Lurie}.

Homotopy limits admit a description in terms of homotopy right Kan extensions, which we can reinterpret as `homotopically terminal' objects in the category of cones over a diagram. Thus, the notion of homotopical limits in a simplicial category, defined above for the Bergner model structure on $\sCat$, fits into our axiomatization of limits in $\infty$-categories in \S \ref{def2.3}.
\end{remark}

\begin{mysubsection}{The spectral sequence of a simplicial object in a simplicial category}
\label{con3.3}
Suppose that $\mX$ is a fibrant simplicial category with countable homotopical limits and colimits, $y$ a fixed homotopy cogroup object of $\mX$, and
$\xd\colon\DK(\Delta\op) \rightarrow\mX$ a pointed homotopy simplicial object
(in the sense of \S \ref{rmk1.7}), with the pointing given by zero maps as in \S  \ref{def2.1}. By \cite[Theorem 4.1]{Dwyer-Kan-Smith}, we can choose an equivalent strict pointed simplicial object $x'_{\bullet}$. We define the spectral sequence associated to $y$ and $\xd$ to be the spiral spectral sequence of \S \ref{def3.1} associated to the simplicial space \ $\pbn\mapsto\Map_{\mX}(y, x'_{n})$.
\end{mysubsection}

\begin{mysubsection}{Simplicial objects in an $\infty$-category}
\label{con2.6}
Let $X$ be an $\infty$-category with countable colimits and limits, satisfying the following two assumptions:
\begin{enumerate}
\item The tensoring takes homotopy pushouts of simplicial sets to pullbacks.
\item The mapping spaces take pullbacks to homotopy pullbacks in the Kan model structure.
\end{enumerate}

We can then mimic the construction of \S \ref{con3.3} for any simplicial object $\xd$ in $X$ and cogroup object $y \in \Ob(X)$, by defining the $n$-th \emph{Moore cycles} object for $\xd$ to be
$Z_{n}(\xd):=\xd[\Delta^{n}/ \partial \Delta^{n}]$,
and the $n$-th \emph{Moore chains} object for $\xd$ to be $C_{n}(\xd):= \xd[\Delta^{n} / \Lambda_{0}^{n} ]$. By our assumptions the cofibration sequence
$$
\Delta^{n-1}/ \partial \Delta^{n-1} \xrightarrow{d^{0}} \Delta^{n} / \Lambda_{0}^{n} \xrightarrow{i} \Delta^{n} / \partial \Delta^{n}
$$
gives rise to a fibre sequence of Kan complexes:
\begin{myeq}\label{eqhfibseq}
\Map_{X}(y, Z_{n}(\xd)) \xrightarrow{i_{n}} \Map_{X}(y, C_{n}(\xd))~
\xrightarrow{\bar{\partial}^{n}}~\Map_{X}(y, Z_{n-1}(\xd))
\end{myeq}
(compare \eqref{eqfibseq}).
Since $y$ is a cogroup object, we obtain from \eqref{eqhfibseq} an exact couple
of groups:
$$
\xymatrix
{
\oplus_{n \in \NN} \pi_{*}\Map_{X}(y, Z_{n}(\emph{x}_{\bullet}))  \ar[dr]_{i_{*}}  && \ar[ll]_{\delta} \oplus_{n \in\NN} \pi_{*}\Map_{X}(y, Z_{n}(\emph{x}_{\bullet})) \\
& \oplus_{n \in\NN} \pi_{*}\Map_{X}(y, C_{n}(\emph{x}_{\bullet})) \ar[ur]_{\bar{\partial}^{*}} &
}
$$
This gives rise to a spectral sequence
$$
E^{1}\sb{n,p} = \pi_{p}\Map_{X}(y, x_{n})~\implies~\pi_{p+n}\Map_{X}(y,\|\xd\|),
$$
which we call the \emph{homotopy spectral sequence} of $\xd$.
We denote the $r$-th differential in this spectral sequence by $\fd^{r}$.
\end{mysubsection}

\begin{remark}\label{rmk3.4}
When working with models of $\infty$-category theory other than simplicial categories, we can often choose a strict simplicial model $\mX$ of an $\infty$-category $X$, and then identify the spectral sequence of \S \ref{con2.6} with the spectral sequence of \S \ref{con3.3} in $\mX$.

In fact, in light of \S \ref{rmk1.9} we could simply \emph{a priori} define the spiral spectral sequence in terms of a strict model for an infinity category, using \S \ref{con3.3}. However, as we will see in the case of quasi-categories in Section \ref{cdssqc}, the framework of \S \ref{con2.6} gives us a way to describe the spectral sequence, and in particular its differentials, internally in any reasonable model of $\infty$-category theory.

Note that in the case the $\infty$-category in question is $\mX = \sSet^{\mathscr{Y}}$ for some simplicial category $\mathscr{Y}$, the two hypotheses of
\S \ref{con2.6} are clearly satisfied, so $\mX(y, -)$ commutes with (homotopy) fibre sequences, and the spectral sequence constructed in \S \ref{con3.3} may be identified with that just described.
\end{remark}

The following result is central for understanding this spectral sequence:

\begin{theorem}\label{thm3.5}
  In the situation of \S \ref{con3.3}, let $\lra{f}\in E^{1}\sb{n,p} = \pi_{0}\Map_{\mX}(\Sigma^{p}y,x_{n})$ be represented by $f\colon\Sigma^{p}y \rightarrow x_{n}$. Then $\lra{f}$ survives to $E^{r}\sb{n,p}$ for $r\geq 2$ if and only if we can find a homotopy coherent diagram $\DK(\pbDel\op_{n-r+1, n} \times\bbo) \rightarrow \mX$ as in \eqref{diagnk}.
\end{theorem}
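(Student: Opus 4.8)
### Proof strategy for Theorem \ref{thm3.5}

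The plan is to translate the condition ``$\lra{f}$ survives to $E^{r}$'' into the existence of the coherent diagram \eqref{diagnk} by unwinding the definition of the spiral exact couple of \S\ref{def3.1}, applied to the simplicial space $\pbn \mapsto \Map_{\mX}(y, x'_{n})$. Recall that in an exact couple derived from the fibre sequences \eqref{eqhfibseq}, a class $\lra{f} \in E^{1}\sb{n,p}$ survives to $E^{r}$ precisely when it admits ``$r$-fold lifts/nullhomotopies'' through the tower: concretely, $\fd^{1}\lra{f} = 0$, and inductively $\fd^{s}\lra{f} = 0$ for $s < r$, where each vanishing amounts to choosing a compatible system of nullhomotopies of the composites $\bar{\partial} \circ (\text{lift of } f)$ in the Moore-cycles/Moore-chains tower. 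The first step, then, is to write out the differentials $\fd^{r}$ of the spiral spectral sequence explicitly in terms of the maps $i$, $\bar{\partial}$, and the connecting map $\delta$, exactly as one does for a filtered space, and record that ``$\lra{f}$ survives to $E^{r}$'' is equivalent to a zig-zag of lifts and nullhomotopies of length $r-1$ in the maps $Z_{n}(x'_{\bullet}) \to C_{n}(x'_{\bullet}) \to Z_{n-1}(x'_{\bullet})$.

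The second step is to interpret such a zig-zag as a homotopy-coherent diagram. Here the key input is that $Z_{n}(x'_{\bullet}) = x'_{\bullet}[\Delta^{n}/\partial\Delta^{n}]$ and $C_{n}(x'_{\bullet}) = x'_{\bullet}[\Delta^{n}/\Lambda^{n}_{0}]$ are defined via the tensoring, which is a (homotopy) limit over the simplex categories of the relevant simplicial sets; by the assumptions in \S\ref{con2.6} (or automatically in the simplicial-category setting, by \S\ref{rmk3.4}), mapping $y$ into these takes pushouts of simplicial sets to pullbacks of Kan complexes. Therefore a nullhomotopy of $f$ composed into $Z_{n-1}$, say, is the same datum as an extension of $f$ over a relative cell, i.e.\ a map out of a larger truncated simplicial diagram; iterating, a length-$(r-1)$ zig-zag of such data assembles — via repeated use of the pushout-to-pullback property and the universal property of the tensoring — into a single homotopy-coherent functor on $\DK$ of the product indexing category $\pbDel\op_{n-r+1,n} \times \bbo$, where the ``$\bbo$-direction'' records the nullhomotopy $\Sigma^{p}y \to 0$ in the top row and the ``$\pbDel\op_{n-r+1,n}$-direction'' records the faces $d_{0}, \ldots, d_{j}$ of the simplicial object over the range of degrees $n-r+1 \le j \le n$ relevant to the $r$-fold lift. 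The passage from a commuting-up-to-chosen-homotopy diagram to a genuine $\DK$-coherent diagram is exactly the content of the classical homotopy-coherence machinery recalled in \S\ref{rmk1.7}, together with \cite{Dwyer-Kan-Smith}; the strict model $x'_{\bullet}$ of \S\ref{con3.3} is what lets us invoke it.

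The third step is the converse: given a coherent diagram $\DK(\pbDel\op_{n-r+1,n}\times\bbo) \to \mX$ of the form \eqref{diagnk}, restrict along the inclusions of the relevant sub-(simplex-)categories and apply $\Map_{\mX}(y,-)$ to extract, by the pushout-to-pullback property run in reverse, the system of lifts and nullhomotopies in the spiral tower that witnesses $\fd^{s}\lra{f} = 0$ for $s < r$; this is essentially step two read backwards, and is the more routine direction. Finally, one should check that the identification is independent of the choice of strict model $x'_{\bullet}$ and of the representative $f$ of $\lra{f}$, which follows since Dwyer--Kan equivalences induce isomorphisms of the spiral spectral sequences from $E^{2}$ on and the coherent-diagram condition is manifestly invariant under equivalence of $\mX$ (this is where the hypothesis $r \ge 2$ is used).

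The main obstacle I anticipate is the bookkeeping in step two: correctly matching the combinatorial shape $\pbDel\op_{n-r+1,n}\times\bbo$ — and in particular the role of the factor $\bbo$ and the truncation range $n-r+1 \le \bullet \le n$ — to the zig-zag of length $r-1$ produced by the exact couple, and then packaging the ad hoc homotopies into a single $\DK$-coherent functor without losing track of compatibilities. The analysis of the differentials for a strict simplicial space carried out in Section \ref{cdvho}, together with the description of $\DK(\pbDel)$ in Section \ref{cdkrd}, should carry the bulk of this load, so that here one mainly has to assemble those two ingredients.
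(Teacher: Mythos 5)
Your overall architecture is the one the paper follows: Theorem \ref{thm3.5} is deduced by induction on $r$ from the statement that, given a coherent diagram on $\DK(\pbDel\op_{n-r+1, n}\times\bbo)$, the obstruction to extending it by one more column is exactly $\fd^{r}\lra{f}$ (this is Theorem \ref{thm4.7}). But your step two has a genuine gap at precisely this point. Extending $D$ from $\DK(\pbDel\op_{n-r+1, n}\times\bbo)$ to $\DK(\pbDel\op_{n-r, n}\times\bbo)$ is not a single nullhomotopy: the only mapping space of the larger category that is not $(r-1)$-skeletal is $\DK(\pbDel\op_{n-r, n}\times\bbo)((\pbn,0),(\ppp{\bn-\br},1))$, which by Proposition \ref{lema.3} is a disjoint union of $r$-permutahedra $\mP^{r}$, one for each composite $\theta\colon\ppp{\bnp}\to\ppp{\bn-\br}$, and the extension requires filling \emph{all} of them. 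You give no reason why this whole family of filling problems is controlled by the single class $\fd^{r}\lra{f}$; moreover, the mechanism you name for the assembly (``repeated use of the pushout-to-pullback property and the universal property of the tensoring'') is what sets up the fibre sequences \eqref{eqhfibseq} and the exact couple, not what produces the coherent diagram. The missing ingredient is the content of Lemma \ref{lemb.1}: after reducing to the Reedy fibrant case and invoking Lemma \ref{lem4.1} ($\pi_{\ast}C_{n}(\xd)\cong C_{n}\pi_{\ast}(\xd)$), one shows that every component except the one indexed by $d_{0}\cdots d_{r}$ fills automatically, and Corollary \ref{corb.2} then identifies the one remaining obstruction with the differential. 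Without an argument of this kind the implication ``$\lra{f}$ survives to $E^{r}$ $\Rightarrow$ the diagram exists'' is not established.

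Two smaller points. The sections you propose to lean on (Sections \ref{cdvho} and \ref{cdkrd}) supply, respectively, the strict-case formulas for the differentials and the permutahedral description of $\DK(\pbDel\op)$, but the assembly argument itself \ -- \ the maps $\mT_{\theta}$ of \S \ref{con4.5}, Lemma \ref{lemb.1} and Corollary \ref{corb.2} \ -- \ is additional work contained in neither. And the hypothesis $r\geq 2$ is not used where you place it (invariance under Dwyer--Kan equivalence): it simply reflects that survival to $E^{1}$ is vacuous, so the induction begins with $\fd^{1}\lra{f}=0$, i.e.\ with the nullhomotopies $G_{j}$ of the composites $d_{j}\circ f$ that populate the first square of \eqref{diagnk}.
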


This follows directly from Theorem \ref{thm4.7} below, which provides a description of the
differentials in the spectral sequence, our other main goal.

\begin{remark}\label{altdia}
A homotopy coherent diagram as in \eqref{diagnk} is the same as a homotopy coherent
truncated restricted simplicial object lifting the diagram
\myzdiag[\label{diagnkeq}]{
  \Sigma^{p}y \ar@<3ex>[rr]^{d_{0} = f} \ar@<0.5ex>[rr]^{d_{1} = 0} \ar@<-2.5ex>[rr]_{d_{n+1} = 0}^{\vdots} && x_{n} \ar@<2ex>[rr]^{d_{0}} \ar@<-1ex>[rr]_{d_{n}}^{\vdots} && x_{n-1}
  \ar@<2ex>[rr]^{d_{0}} \ar@<-1ex>[rr]_{d_{n-1}}^{\vdots}&& \cdots
  \ar@<2ex>[rr]^{d_{0}} \ar@<-1ex>[rr]_{d_{n-r+1}}^{\vdots} && x_{n-r}
}
in $\cP(\mX)$.
\end{remark}

\begin{mysubsection}{The filtration in the spectral sequence}
\label{sfiltss}
We can regard the homotopical colimit $\|\xd\|$ as providing an augmentation of the simplicial object $\xd$. Thus, Theorem \ref{thm3.5} also applies to the case $r = n+1$, yielding a necessary and sufficient condition for an element which survives to the $E^{\infty}$-page of the spectral sequence to be represented in a given filtration. More precisely, Theorem \ref{thm4.7} provides a series of obstructions to an element at the $E^{1}$-page to appear in the appropriate filtration degree.

Specializing to the category of pointed spaces, the last of these obstructions allows one to study how homotopy groups of spaces \ -- \ in particular, spheres \ -- \ can be generated under certain `higher homotopy operations' from certain indecomposables (see \cite{BBSenHS}).
\end{mysubsection}

%
%
\sect{Differentials in the spectral sequence of a simplicial space}
\label{cdvho}

In order to study the differentials in the spectral sequence of Section \ref{cssso},
we first characterize them in the case of a strict simplicial object.
We start by recalling the following result (dual to \cite[X, Prop.~6.3]{BK}):

\begin{lemma}[\protect{\cite[Lemma 2.7]{StoV}}]\label{lem4.1}
If $\xd$ is a Reedy fibrant simplicial space, the natural map $\pi_{\ast}C_{n}(\xd) \rightarrow C_{n}\pi_{\ast}(\xd)$ is an isomorphism.
\end{lemma}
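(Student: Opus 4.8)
The plan is to exhibit $C_{n}(\xd)$ as a homotopy limit and then commute $\pi_{\ast}$ past it, using Reedy fibrancy to replace the homotopy limit by a strict one. Recall that by definition $C_{n}(\xd) = \xd[\Delta^{n}/\Lambda^{n}_{0}]$, the tensoring of the simplicial space $\xd$ over the quotient simplicial set $\Delta^{n}/\Lambda^{n}_{0}$; unwinding the tensoring formula (Definition \ref{def2.4} applied levelwise in $\sSa$), this is a limit over the simplex category of $\Delta^{n}/\Lambda^{n}_{0}$. First I would observe that the classical identification (dual to \cite[Ch.~X]{BK}) gives an isomorphism $C_{n}(\xd) \cong \bigcap_{i=1}^{n}\ker(d_{i}\colon x_{n}\to x_{n-1})$ of pointed simplicial sets, realized concretely as the limit of a punctured-cube diagram built from the faces $d_{1},\dots,d_{n}$; equivalently it is the iterated fibre of the maps $d_{i}$, matching the Moore-chains construction for a chain complex.

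The key step is then to promote this strict limit to a homotopy limit. Since $\xd$ is Reedy fibrant, each matching map $x_{m}\to M_{m}\xd$ is a fibration of Kan complexes, and in particular the relevant face maps appearing in the punctured-cube diagram defining $C_{n}(\xd)$ are fibrations between fibrant objects; hence the strict limit computing $C_{n}(\xd)$ is in fact a homotopy limit. I would make this precise by an induction on $n$: writing $C_{n}(\xd)$ as the fibre of a fibration $C_{n-1}(\text{shifted diagram})\to x_{n-1}$ (or directly as the iterated pullback), Reedy fibrancy guarantees at each stage that we are taking an honest pullback along a fibration, so no derived correction is needed.

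With that in hand, the proof concludes by applying $\pi_{\ast}$. Because $\pi_{\ast}$ of a fibre sequence of Kan complexes gives a long exact sequence, and because the faces $d_{i}$ are compatible with the chain-complex differential on $\pi_{\ast}\xd$, one shows by the same induction that $\pi_{\ast}C_{n}(\xd)\to \bigcap_{i=1}^{n}\ker(d_{i}\colon\pi_{\ast}x_{n}\to\pi_{\ast}x_{n-1}) = C_{n}(\pi_{\ast}\xd)$ is an isomorphism: surjectivity uses that a class in the kernel of all the $d_{i}$ lifts through the successive fibrations (the obstruction to lifting lives in a group that the inductive hypothesis kills), and injectivity is a diagram chase in the long exact sequences, again using that the relevant connecting maps vanish on the nose. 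I expect the main obstacle to be bookkeeping: setting up the punctured-cube (or iterated-fibre) presentation of $C_{n}$ cleanly enough that the Reedy-fibrancy hypothesis visibly supplies, at every inductive stage, both the fibration needed for the homotopy-limit identification and the exactness needed for the $\pi_{\ast}$ computation — in other words, making the two inductions run in parallel without the combinatorics of the horn $\Lambda^{n}_{0}$ obscuring the argument.
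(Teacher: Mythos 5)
First, a point of order: the paper does not prove this lemma at all --- it is quoted verbatim from Stover \cite[Lemma 2.7]{StoV} (as the dual of \cite[Ch.~X, Prop.~6.3]{BK}), so there is no in-paper argument to measure yours against; what follows is an assessment of your proposal on its own terms. Your first half is essentially the standard route and is fine once the iterated-fibre presentation is set up through the partial horns rather than the bare product: putting $L_j=d^1\Delta^{n-1}\cup\cdots\cup d^j\Delta^{n-1}$, Reedy fibrancy makes each map $\xd[\Delta^n/L_j]\to\xd\bigl[d^{j+1}\Delta^{n-1}/(d^{j+1}\Delta^{n-1}\cap L_j)\bigr]$ a fibration with fibre $\xd[\Delta^n/L_{j+1}]$. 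Note that your ``punctured cube over $\prod_{i=1}^{n}x_{n-1}$'' is not quite a homotopy-limit presentation: Reedy fibrancy makes $x_n\to\xd[\Lambda_0^n]$ a fibration, but not the composite $x_n\to\prod_{i=1}^{n}x_{n-1}$, since the inclusion of the compatible tuples $\xd[\Lambda_0^n]$ into the full product is not a fibration.

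The genuine gap is in the second half, where you assert that ``the relevant connecting maps vanish on the nose'' and that ``the obstruction to lifting lives in a group that the inductive hypothesis kills.'' Neither claim follows from Reedy fibrancy or from the inductive hypothesis. What actually collapses the long exact sequences of the tower into short exact sequences is the surjectivity of $\pi_k x_n\to\pi_k\xd[L_j]$ (equivalently, of $\pi_k\xd[\Delta^n/L_j]$ onto $\pi_k$ of the base at each stage), and this is supplied by the \emph{degeneracies}: the identities $d_i s_{i-1}=\mathrm{id}$ split the face maps on homotopy groups, and the simplicial identities let one write down explicit preimages of a class annihilated by the prescribed faces. Your argument never invokes the degeneracy maps, and without them the statement is simply false: for the restricted simplicial space with $x_1=\ast$, $x_0=S^2$ and all faces constant, one has $C_1(\xd)\simeq\Omega S^2$ after Reedy fibrant replacement, so $\pi_1 C_1(\xd)\cong\mathbb{Z}$ while $C_1\pi_1(\xd)=\ker\bigl(d_1\colon\pi_1(\ast)\to\pi_1 S^2\bigr)=0$. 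So the missing idea is precisely the use of the degeneracies (as in Stover's and Bousfield--Kan's arguments) to force the relevant maps to be split surjections on $\pi_{\ast}$; with that input made explicit, your parallel induction does close up.
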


Here, $C_{n}(\xd)$ is as in \S \ref{csssec}, and $C_{n}\pi_{\ast}(\xd)$ denotes the intersection of the kernels of $d_{i}\colon\pi_{*}(x_{n})\rightarrow \pi_{*}(x_{n-1})$ for $1\leq i\leq n$.

\begin{mysubsection}{Differentials for Reedy fibrant simplicial objects}
\label{con4.2}
Suppose that we are in the situation of \S \ref{con3.3} and that $\Map_{\mX}(y, \xd)$
is a Reedy fibrant (strict) simplicial space. Any element $\lra{f}\in E^{2}\sb{n,p}$ is represented by $f \in \Map_{\mX}(\Sigma^{p}y, C_{n}(\xd))_{0}$. Using the notations:
$$
C_{n}(x_{\bullet}) \xrightarrow{\bar{\partial}^{n}} Z_{n-1}(x_{\bullet}) \xrightarrow{i_{n-1}} C_{n-1}(x_{\bullet}) \xrightarrow{j_{n-1}} x_{n-1}
$$
for the obvious maps, we set $g:=\bar{\partial}^{n} \circ f$.

By Lemma \ref{lem4.1}, any nullhomotopy $H'\colon j_{n-1} \circ
i_{n-1} \circ g\sim 0$ lifts to a nullhomotopy $H\colon i_{n-1}
\circ g  = i_{n-1} \circ \bar{\partial^{n}} \circ f\sim 0$. Thus a
necessary and sufficient condition for $\lra{f}$ to survive to the
$E^{2}$-page is the existence of $H'$ as above.

Since $\bar{\partial}^{n-1} \circ \bar{\partial}^{n} = 0$, when $H'$ (and thus $H$) exists,
$\bar{\partial}^{n-1} \circ H$ is a loop on $\Map_{\mX}(\Sigma^{p}y, C_{n}(\xd))$, and $\fd^{2}\lra{a}$ is represented by
$i_{n-2}\circ\bar{\partial}^{n-1} \circ H\colon\Sigma^{p+1}y \rightarrow C_{n-2}(\xd)$.
We then have a commutative diagram
$$
\xymatrix
{
C_{n-1}(\xd) \ar[r]_{\bar{\partial}^{n-1}} \ar[d]_{j_{n-1}} & Z_{n-2}(\xd) \ar[d]^{j_{n-2} \circ i_{n-2}} \\
x_{n-1} \ar[r]_{d_{0}} & x_{n-2},
}
$$
and
$$
j_{n-2} \circ i_{n-2} \circ \bar{\partial}^{n-1} \circ H = d_{0} \circ j_{n-1} \circ H = d_{0} \circ H'.
$$
This allows us to identify $\fd^{2}\lra{a}$ with $d_{0}H'$.

If $\fd^{2}\lra{a}$ vanishes, we have a nullhomotopy $H''\colon d_{0}(H') \sim 0$, and
$\lra{f}$ survives to the $E^{3}$-page of the spectral sequence, with $\fd^{3}\lra{f}$ identified with $d_{0} \circ H''$. Repeating this process yields $\fd^{r}\lra{f}$ for all $r$
(when defined): in each case we have simply choose liftings in the
long exact sequence of a fibration.
This completes our description of the differentials in the Reedy fibrant case.
\end{mysubsection}

\begin{mysubsection}{Differentials for an arbitrary simplicial space}
\label{exam4.3}
Suppose now that we have an arbitrary strict simplicial space of the form
$\pbn\mapsto \Map_{\mX}(y, x_{n})$, where $\mX$ is a fibrant simplicial category.
We can then represent each element $\lra{f} \in E^{1}\sb{n,p}$ in the spectral sequence of
\S \ref{con3.3} by a map $f\colon\Sigma^{p}(y) \rightarrow x_{n}$.

We define the \emph{$n$-th matching object} for $\xd$ to be
$M_{n}(\xd):=\xd[\partial \Delta^{n}]$, and its \emph{$n$-th modified matching object})
to be $\hat{M}_{n}(\xd):= \xd[\Lambda_{0}^{n}]$. We then have a homotopy fiber sequence
\begin{myeq}\label{fibre1}
  C_{n-1}(\xd) \rightarrow x_{n-1} \xrightarrow{\delta_{n-1}} \hat{M}_{n-1}(\xd)
\end{myeq}
in $\mX$, and hence homotopies $G_{j}\colon d_{i} \circ f \sim 0$ for $j \ge 1$, as well as \emph{coherence homotopies} $F_{i, j}\colon d_{i}G_{j} \sim d_{j-1}G_{i}$
for all $1 \le i < j \le n$.

If $\fd^{1}\lra{f} = 0$, we also have a nullhomotopy $H\colon d_{0} \circ f \sim 0$,
and the differential $\fd^{2}\lra{f}$ can be represented by the map $\partial \Delta^{2} \rightarrow \Map_{\mX}(\Sigma^{p}y, x_{n-2})$  depicted by:
\mytdiag[\label{dee2strict}]{
& \ar[ddl]_{d_{0}G_{1}} d_{0}d_{1}f \ar[ddr]^{d_{0}H} & \\
& & \\
 \ar[rr]_{\Id} 0 & & 0.
}

To see this, by Reedy fibrant replacement of $\xd$ we can reduce to the case where \eqref{fibre1} is a strict limit and $G_{1}$ is the identity nullhomotopy. We then
recover the description of \S \ref{con4.2}.

Assuming now $\fd^{2}\lra{f}$ vanishes, we can find a fill-in $K\colon\Delta^{2} \rightarrow \Map_{\mX}(\Sigma^{p}y, x_{n-2})$ for \eqref{dee2strict}, and the vanishing of the
differential $\fd^{2}$ implies that we have additional coherences
$F_{0, j}\colon d_{0}G_{j} \sim d_{j-1}G_{0}$
for $G_{0} = H$, since $d_{0}f$ factors through $Z_{n-1}(\xd)$, and in the Reedy fibrant case we have a fibre sequence
$$
Z_{n-1}(\xd) \rightarrow x_{n-1} \rightarrow M_{n-1}(\xd)~.
$$

Now the value of the differential $\fd^{3}\lra{f}$ is given by the following $3$-simplex boundary in $\Map_{\mX}(\Sigma^{p}y, x_{n-3})$:
\mydiagram[\label{dee3strict}]{
& & 0 \ar@{-}[dd]_{\rotatebox{90}{$\scriptstyle d_{0}d_{1}G_{0}$}} & &  \\
\ar@{}[drrr]_{d_{0}F_{0, 2}} \ar@{}[ddrrrr]_>>>>>>>>>>>>>>>>>>>>{d_{0}F_{1, 2}} & &  & & \ar@{}[dlll]^{d_{0}K} \\
& & 0 \ar@{-}[drr] \ar@{-}[dll] & & \\
0 \ar@{-}[uuurr]^{\Id_{0}} \ar@{-}[rrrr]_{\Id_{0}}  \ar@{-}[urr]  &  &  \ar@{}[ul]^>>>>>>{\rotatebox{30}{$\scriptstyle d_{0}d_{1}G_{2}$}}  \ar@{}[ur]_>>>>>>{\rotatebox{-30}{$\scriptstyle d_{0}d_{1}G_{1}$}}  & & \ar@{-}[uuull]_{\Id_{0}} 0,
}
with the bottom face mapping to $0$. Again, in the Reedy fibrant case the higher coherences $F_{1, 2}$ and $F_{0, 2}$ vanish, and we recover the description in \S \ref{con4.2}.

This procedure can be continued to obtain all higher differentials in this case, though we shall not require an explicit formula for them.
\end{mysubsection}

%
%
\sect{The Dwyer-Kan resolution of $\Delta\op$}
\label{cdkrd}

By Definition \ref{def2.4}, a simplicial object in an $\infty$-category $X$ is
a functor $\xd\colon\cB(\Delta\op) \rightarrow X$. However, for the purpose of constructing the homotopy spectral sequence for $\xd$, we need
only consider the induced homotopy coherent simplicial space $\Map_{X}(y,\xd)$.
To avoid obscuring the salient points, we restrict attention for the
rest of this section to the case where $\xd$ is a homotopy-coherent simplicial
object in a fibrant simplicial category $\mX$: that is, a simplicial functor
$\DK(\Delta\op)\to\mX$.

From the description of the differentials in Section \ref{cdvho} we see that the degeneracy maps are not needed \ -- \ so in fact in order to compute for any specific
differential we need only a finite subcategory
of $\Delta$ of the form $\bDel_{k,n}$. In order to cover also the filtration degree (see \S \ref{sfiltss}), we state our results in terms of the categories $\pbDel_{n}$ (see \S \ref{snac}).

The simplicial mapping spaces in $\DK(\pbDel_{k,n})$ can be understood better
by observing that they are triangulations of a certain well-known sequence of polytopes:

\begin{mysubsection}{Permutahedra}
\label{sconcpoly}
Recall that an \emph{$n$-permutahedron} $\mP_{S}$ is a convex
polytope spanned by the $(n+1)!$ points in $\RR^{n+1}$ obtained by
permuting the coordinates of $(x_{0},x_{1},\cdots,x_{n})$ (where
$S=\{x_{0},x_{1},\cdots,x_{n}\}$ are $n+1$ distinct real numbers).
Thus the $1$-permutahedron is an interval, the $2$-permutahedron is a hexagon, and the
$3$-permutahedron is depicted in Figure \ref{ftperm}.

\stepcounter{theorem}
\begin{figure}[htbp]
\begin{picture}(200,195)(50,-10)
%
%
\put(40,72){\circle*{5}}
\put(0,69){\tiny $d_0d_1d_2d_2$}
\put(41,69){\line(1,-3){10}}
\put(52,36){\circle*{5}}
\put(13,33){\tiny $d_0d_1d_1d_2$}
\put(54,36){\line(1,0){33}}
\put(88,36){\circle*{5}}
\put(57,42){\tiny $d_0d_1d_1d_1$}
\put(89,37){\line(2,3){22}}
\put(112,72){\circle*{5}}
\put(74,70){\tiny $d_0d_1d_2d_1$}
\put(41,74){\line(1,3){10}}
\put(52,108){\circle*{5}}
\put(13,105){\tiny $d_0d_1d_2d_3$}
\put(55,108){\line(1,0){45}}
\put(100,108){\circle*{5}}
\put(62,111){\tiny $d_0d_1d_1d_3$}
\put(101,105){\line(1,-3){10}}
%
%
\put(54,34){\line(4,-3){47}}
\put(100,0){\circle*{5}}
\put(58,-4){\tiny $d_0d_0d_1d_2$}
\put(103,0){\line(1,0){21}}
\put(124,0){\circle*{5}}
\put(128,-8){\tiny $d_0d_0d_1d_1$}
\put(91,32){\line(1,-1){33}}
%
%
\put(128,0){\line(4,1){45}}
\put(172,12){\circle*{5}}
\put(177,6){\tiny $d_0d_0d_0d_1$}
\put(173,15){\line(1,4){8}}
\put(184,60){\circle*{5}}
\put(170,50){\tiny $d_0d_1d_0d_1$}
\put(115,73){\line(3,1){34}}
\put(148,84){\circle*{5}}
\put(127,71){\tiny $d_0d_0d_2d_1$}
\put(151,82){\line(3,-2){32}}
%
%
\put(174,14){\line(1,1){45}}
\put(220,60){\circle*{5}}
\put(225,57){\tiny $d_0d_0d_0d_0$}
\put(221,64){\line(1,4){10}}
\put(186,62){\line(1,1){10}}
\put(205,81){\line(1,1){23}}
\put(232,108){\circle*{5}}
\put(235,105){\tiny $d_0d_1d_0d_0$}
%
%
\put(136,120){\circle*{5}}
\put(100,124){\tiny $d_0d_0d_1d_3$}
\put(137,117){\line(1,-3){6}}
\put(144,96){\line(1,-3){2}}
\put(138,122){\line(3,4){33}}
\put(172,168){\circle*{5}}
\put(177,171){\tiny $d_0d_0d_0d_3$}
\put(175,167){\line(2,-1){44}}
\put(220,144){\circle*{5}}
\put(225,141){\tiny $d_0d_0d_2d_0$}
\put(221,141){\line(1,-3){10}}
%
%
\put(103,109){\line(3,1){32}}
\put(54,111){\line(2,3){21}}
\put(76,144){\circle*{5}}
\put(37,141){\tiny $d_0d_0d_2d_3$}
\put(79,145){\line(2,1){45}}
\put(124,168){\circle*{5}}
\put(112,173){\tiny $d_0d_1d_0d_3$}
\put(127,168){\line(1,0){45}}
%
%
\multiput(77,140)(1,-4){12}{\circle*{.5}}
\put(88,96){\circle*{3}}
\put(60,87){\tiny $d_0d_0d_2d_2$}
\multiput(91,95)(3,-1){12}{\circle*{.5}}
\put(124,84){\circle*{3}}
\put(112,90){\tiny $d_0d_1d_0d_2$}
\multiput(127,86)(3,2){2}{\circle*{.5}}
\multiput(145,98)(3,2){6}{\circle*{.5}}
\put(160,108){\circle*{3}}
\put(165,105){\tiny $d_0d_1d_1d_0$}
\multiput(161,111)(1,3){12}{\circle*{.5}}
\put(172,144){\circle*{3}}
\put(171,134){\tiny $d_0d_1d_2d_0$}
\multiput(168,146)(-4,2){12}{\circle*{.5}}
%
%
\multiput(44,74)(4,2){6}{\circle*{.5}}
\put(84,94){\circle*{.5}}
%
%
\multiput(123,81)(1,-3){12}{\circle*{.5}}
\put(136,48){\circle*{3}}
\put(135,40){\tiny $d_0d_0d_0d_2$}
\multiput(133,46)(-3,-4){12}{\circle*{.5}}
%
%
\multiput(141,50)(3,2){12}{\circle*{.5}}
\put(172,72){\circle*{3}}
\put(174,74){\tiny $d_0d_0d_1d_0$}
\multiput(175,71)(4,-1){12}{\circle*{.5}}
%
%
\multiput(162,105)(1,-3){12}{\circle*{.5}}
\multiput(176,144)(4,0){12}{\circle*{.5}}
\end{picture}
\caption[ftperm]{The $3$-permutahedron}
\label{ftperm}
\end{figure}

A \emph{face} of $\mP_{S}$ is a convex polytope obtained by intersecting the boundary
$\partial\mP_{S}$ with half-spaces in $\RR^{n+1}$. In particular, $\mP_{S}$ has one \emph{facet} (that is, $(n-1)$-dimensional face), isomorphic to the product $\mP_{S_{1}} \times \mP_{S_{2}}$, for each ordered partition  $S_{1} \amalg S_{2}$ of $S=\{0, \cdots, n \}$, say. More generally, the $k$-dimensional faces are in one-to-one correspondence with ordered partitions of $S$ into $n-k+1$ disjoint sets.

As we shall see in Proposition \ref{lema.3} below, for each $n\geq 1$ and $k\geq -1$, every component of the simplicial mapping space $\DK(\pbDel\op)(\ppp{\bk+\bn+\bo},\pbk)$
of the Dwyer-Kan resolution constitutes a canonical triangulation of the $n$-permutahedron
$\mP_{S}$, which we denote by $\mP^{n}$.
\end{mysubsection}

\begin{examples}\label{exama.1}

\begin{enumerate}
\item Evidently $\DK(\pbDel\op)(\pbm, \ppp{\bm-\bo})$ is discrete, with vertices corresponding to the morphisms $d_{i} \colon \pbm\rightarrow\ppp{\bm-\bo}$.
\item Any map $\theta\colon\pbm\rightarrow\ppp{\bm-\btw}$ in $\pbDel\op$ can be written uniquely as $d_{i}d_{j}$ for some $i < j$. The corresponding component of $\DK(\pbDel \op)(\pbm, \ppp{\bm-\btw})$ is the subdivided interval:
$$
\xymatrix
{
(d_{i})(d_{j}) \ar@{-}[r] & (d_{i}d_{j}) = (d_{j-1}d_{i}) & \ar@{-}[l] (d_{j-1})(d_{i})~.
}
$$
\item Any map $\theta\colon\pbm\rightarrow\ppp{\bm-\bth}$ has the form $d_{i}d_{j}d_{k}$
  for $i < j < k$, and the corresponding component of $\DK(\pbDel \op)(\pbm,\ppp{\bm-\bth})$ is the subdivided hexagon:
\mytdiag[\label{eqsubhexagon}]{
& & (d_{k-2})(d_{j-1})(d_{i}) \ar@{-}[ddrr] \ar@{-}[ddll] & & \\
 \ar@{}[urr]^<<<{ (d_{k-2})(d_{i}d_{j}) = (d_{k-2})(d_{j-1}d_{i}) \, \, \, \, }&  & &  & \ar@{}[ull]_<<<{\, \, \,  (d_{j-1}d_{k-1})(d_{i}) = (d_{k-2}d_{j-1}) (d_{i}) } \\
(d_{k-2})(d_{i})(d_{j}) \ar@{-}[dddd] & & &  & (d_{j-1})(d_{k-1})(d_{i}) \ar@{-}[dddd]   \\
& & & & \\
\ar@{}[r]_<{ (d_{i}d_{k-1})(d_{j}) = (d_{k-2}d_{i})(d_{j}) \, \, \, \, } & & (d_{i}d_{j}d_{k}) \ar@{.}[uurr] \ar@{.}[ddrr] \ar@{.}[uull] \ar@{.}[dddd]  \ar@{.}[uuuu] \ar@{.}[ddll] \ar@{.}[rr]  \ar@{.}[ll] \ar@{.}[uuur] \ar@{.}[uuul]  \ar@{.}[dddr] \ar@{.}[dddl] & & \ar@{}[l]^<{\, \, \,  (d_{j-1})(d_{k-1}d_{i}) = (d_{j-1})(d_{i} d_{k}) } \\
& & & & \\
(d_{i})(d_{k-1})(d_{j}) \ar@{-}[ddrr] & & & & (d_{j-1})(d_{i})(d_{k}) \ar@{-}[ddll] \\
& \ar@{}[r]_<{ (d_{i})(d_{j}d_{k}) = (d_{i})(d_{k-1} d_{j}) \, \, \, \, } & &  \ar@{}[l]^<{\, \, \,  (d_{j-1}d_{i})(d_{k}) = (d_{i}d_{j})( d_{k} )}& \\
& & (d_{i})(d_{j})(d_{k}) & &
}
The original vertices correspond to all complete decompositions of $\theta$ (as in Figure \ref{ftperm}), while the new vertices (the midpoints of the edges and the center)
correspond to partial decompositions.
\end{enumerate}
\end{examples}

\begin{lemma}\label{lema.2}
The set of factorizations of a morphism $\theta\colon\pbn\rightarrow\pbm$ as $\pbn\xrightarrow{\theta'}\ppp{\bm'}\xrightarrow{\theta''}\pbm$  in $\pbDel\op$ are in one-to-one correspondence with the set of ordered partitions of the set $\{ 0, \cdots , n-m-1 \}$.
\end{lemma}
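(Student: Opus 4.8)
\emph{Proof plan.}

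The plan is to pass from $\pbDel\op$ to $\pbDel$, where the statement becomes essentially a statement about subsets. Recall from \S\ref{snac} that $\pbDel$ is the category of finite ordered sets and order-preserving injections; hence a morphism $\theta\colon\pbn\to\pbm$ of $\pbDel\op$ is the same thing as a monomorphism $\mu_{\theta}\colon\pbm\hookrightarrow\pbn$ of $\pbDel$ (in particular such a $\theta$ exists only when $n\geq m$), and such a monomorphism is determined by, and may be identified with, its image $T_{\theta}\subseteq\pbn$, a subset of cardinality $m+1$; equivalently it is determined by the complement $J_{\theta}:=\pbn\setminus T_{\theta}$, a subset of cardinality $n-m$, which one should think of as the set of ``elementary face maps out of which $\theta$ is built''. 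Since $J_{\theta}$ carries a linear order induced from $\pbn$, there is a unique order-isomorphism $\phi\colon J_{\theta}\xrightarrow{\ \cong\ }\{0,\dots,n-m-1\}$; transporting along $\phi$, it therefore suffices to produce a bijection between the set of factorizations of $\theta$ and the set of ordered partitions $J_{\theta}=J_{1}\amalg J_{2}$ into two (possibly empty) blocks.

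Next I would rephrase what a factorization amounts to. A factorization $\pbn\xrightarrow{\theta'}\ppp{\bm'}\xrightarrow{\theta''}\pbm$ in $\pbDel\op$ corresponds, under the dictionary above, to a factorization $\pbm\hookrightarrow\ppp{\bm'}\hookrightarrow\pbn$ of $\mu_{\theta}$ in $\pbDel$, the first arrow being $\mu_{\theta''}$ (dual to $\theta''$) and the second $\mu_{\theta'}$ (dual to $\theta'$). Put $U:=\Image(\mu_{\theta'})\subseteq\pbn$, a subset of cardinality $m'+1$; since $T_{\theta}=\Image(\mu_{\theta})=\mu_{\theta'}(\Image(\mu_{\theta''}))\subseteq U$, the set $U$ is an \emph{intermediate} subset $T_{\theta}\subseteq U\subseteq\pbn$. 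Conversely, given any intermediate subset $T_{\theta}\subseteq U\subseteq\pbn$, there is exactly one factorization of $\mu_{\theta}$ realizing it: setting $m':=|U|-1$, the monomorphism $\mu_{\theta'}$ is forced to be the order-isomorphism $\ppp{\bm'}\cong U$ followed by the inclusion into $\pbn$, and then $\mu_{\theta''}$ is forced to be $\mu_{\theta'}^{-1}\circ\mu_{\theta}$, which is well-defined precisely because $T_{\theta}\subseteq U$. Thus factorizations of $\theta$ are in bijection with the intermediate subsets $U$, hence (via $U\mapsto\pbn\setminus U$) with the subsets $J_{1}\subseteq J_{\theta}$, hence with the ordered two-block partitions $J_{\theta}=J_{1}\amalg(J_{\theta}\setminus J_{1})$; composing with $\phi$ gives the asserted bijection. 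Along the way one checks that the constraint $m\leq m'\leq n$ is automatic, since $m'=n-|J_{1}|$ and $0\leq|J_{1}|\leq|J_{\theta}|=n-m$, and that under $\phi$ the block $J_{1}$ records the elementary faces composing $\theta'$ while $J_{\theta}\setminus J_{1}$ (re-indexed inside $\ppp{\bm'}$) records those composing $\theta''$ --- which is the combinatorics already visible in Examples \ref{exama.1}.

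The argument is almost entirely bookkeeping, and the step I expect to require the most care --- and where the main scope for error lies --- is tracking variance: passing to $\pbDel\op$ reverses the order of composition, so the \emph{outer} factor $\theta'$ of the $\pbDel\op$-factorization is dual to the \emph{larger} intermediate subset $U=\Image(\mu_{\theta'})$, whereas the \emph{inner} factor $\theta''$ is dual to the sub-monomorphism $\mu_{\theta''}$ of $\ppp{\bm'}$. The one substantive ingredient underneath everything is the elementary fact that an order-preserving injection between finite ordered sets is uniquely determined by its image; this is exactly what makes both $\mu_{\theta'}$ and $\mu_{\theta''}$ forced once $U$ is chosen, and hence what turns the count of factorizations into a count of subsets. (If one prefers to discard the two trivial factorizations in which $\theta'$ or $\theta''$ is an identity, the same bijection restricts to one with the ordered partitions of $\{0,\dots,n-m-1\}$ into two nonempty blocks.)
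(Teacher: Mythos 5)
Your proof is correct and takes essentially the same route as the paper's: both translate $\theta$ into a monomorphism of $\pbDel$, identify a factorization with the intermediate image $U$ (using that an order-preserving injection is determined by its image), and then with a subset of the $(n-m)$-element complement of $\Image(\theta)$ in $\pbn$, i.e.\ an ordered two-block partition of $\{0,\dots,n-m-1\}$. The only cosmetic difference is that you record the block $\pbn\setminus U$ where the paper records $U\setminus\Image(\theta)$, which merely swaps the two blocks of the partition.
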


\begin{proof}
Note that by \S \ref{snac}, maps $[\bn] \rightarrow [\bm]$ in $\pbDel\op$ correspond to injections of ordered sets $[\bm] \rightarrow [\bn]$.

First suppose that we have a factorization $\theta'' \theta'$ as above. Note that there is an isomorphism of ordered sets $Q\colon\pbn-\Image(\theta) \cong\ppp{\bn-\bm-\bo}$. If $C$ is the complement of $\Image(\theta)$ in $\Image(\theta')$, then $Q(C)$ gives us an ordered subset $S_{1}$ of $\ppp{\bn-\bm-\bo}$.

On the other hand, suppose we are given an ordered subset $S_{1}$ of $\ppp{\bn-\bm-\bo}$. Then there is a unique factorization of $\theta$ as a pair of injections of ordered sets
$$
\pbm \rightarrow \ppp{\bm+ |S_{1}|}  \rightarrow \pbn~,
$$
so that the image of the second map is $Q^{-1}(S_{1}) \cup \Image(\theta)$.

These constructions are evidently inverse to each other, and ordered partitions
of $\{ 0, \cdots, n-m-1 \}$ are in one-to-one correspondence with choice of ordered subsets of $\{ 0, \cdots n-m-1\}$.
\end{proof}

\begin{prop}\label{lema.3}
The simplicial set $\DK(\pbDel \op)(\pbl,\pbm)$ is isomorphic to a disjoint union
$\coprod_{\theta\colon\pbl \rightarrow\pbm} \mP^{j-m-1}$ of triangulations of the $(j-m-1)$-permutahedron.
\end{prop}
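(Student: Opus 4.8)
The plan is to prove this component by component: for each $\theta\colon\pbl\to\pbm$ in $\pbDel\op$ I would exhibit the corresponding piece of $\DK(\pbDel\op)(\pbl,\pbm)$ as the nerve of an explicit poset, show that poset is the face poset of a permutahedron, and then sum over $\theta$. The component decomposition itself is formal: the augmentation $\DK(\pbDel\op)\to\pbDel\op$ is a functor of simplicial categories whose target is discrete, so it induces a map of simplicial sets $\DK(\pbDel\op)(\pbl,\pbm)\to\Hom_{\pbDel\op}(\pbl,\pbm)$ with discrete codomain. The preimages $\DK_{\theta}$ of the various $\theta$ — the simplices whose total composite is $\theta$ — are then disjoint sub-simplicial-sets whose union is everything, so $\DK(\pbDel\op)(\pbl,\pbm)=\coprod_{\theta\colon\pbl\to\pbm}\DK_{\theta}$ and it suffices to identify each $\DK_{\theta}$.

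Next I would unwind the Dwyer-Kan resolution for a general small category $C$. Unravelling the iterated comonad, an $n$-simplex of $\DK(C)(a,b)$, i.e.\ an element of $(FU)^{n+1}C(a,b)$, amounts to a composable string $(f_{1},\dots,f_{s})$ of non-identity morphisms of $C$ from $a$ to $b$ together with a nested chain $\mu_{1}\preceq\dots\preceq\mu_{n}$ of partitions of $\{1,\dots,s\}$ into consecutive blocks; the face maps compose the morphisms grouped by a chosen $\mu_{i}$ (with $d_{0}$, $d_{n}$ discarding the outermost, innermost grouping) and the degeneracies repeat a grouping. Reading the base string as the finest term and coarsening it successively along $\mu_{n},\mu_{n-1},\dots,\mu_{1}$ presents this datum as a length-$n$ chain in the poset $\mathcal{S}_{a,b}$ whose objects are composable strings of non-identity morphisms $a\to b$, with $\sigma\le\tau$ when $\tau$ refines $\sigma$ by splitting morphisms into composable factors. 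Checking that the simplicial operators transport to those of the nerve then gives $\DK(C)(a,b)\cong N(\mathcal{S}_{a,b})$ (up to an orientation convention which does not affect the underlying complex), and hence $\DK_{\theta}\cong N(\mathcal{S}_{\theta})$, where $\mathcal{S}_{\theta}\subseteq\mathcal{S}_{a,b}$ is the full subposet of strings with composite $\theta$. Since $\mathcal{S}_{\theta}$ has the one-term string $(\theta)$ as its least element, $N(\mathcal{S}_{\theta})$ is connected, so these are indeed the components.

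Then I would specialize to $C=\pbDel\op$ and identify the polytope. Let $T_{\theta}\subseteq\pbl$ be the $(j-m)$-element set omitted by the monomorphism $\pbm\hookrightarrow\pbl$ that $\theta$ represents. Applying Lemma \ref{lema.2} repeatedly, a composable string of non-identity morphisms of $\pbDel\op$ with composite $\theta$ corresponds bijectively and naturally to an ordered partition of $T_{\theta}$ into nonempty blocks — the $i$-th block being the elements of $T_{\theta}$ that first appear at the $i$-th stage of the associated chain of monomorphisms $\pbm\hookrightarrow\dots\hookrightarrow\pbl$ — and splitting a morphism corresponds to subdividing a block. Thus $\mathcal{S}_{\theta}$ is isomorphic to the poset of ordered partitions of $T_{\theta}$ into nonempty blocks, which by the face description of the permutahedron recalled in \S\ref{sconcpoly} is (anti-)isomorphic to the face poset of the $(|T_{\theta}|-1)$-permutahedron, i.e.\ of $\mP^{j-m-1}$. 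Its nerve is the barycentric subdivision, a canonical triangulation of that polytope; as this poset depends only on $|T_{\theta}|=j-m$, the triangulation is independent of $\theta$, and writing $\mP^{j-m-1}$ for it yields $\DK(\pbDel\op)(\pbl,\pbm)\cong\coprod_{\theta}\mP^{j-m-1}$.

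I expect the genuine obstacle to be the second step: obtaining the ``string plus nested chain of partitions'' description of $(FU)^{n+1}C(a,b)$ and verifying that the face and degeneracy operators match those of $N(\mathcal{S}_{a,b})$ — in particular pinning down the orientation, and making sure one is using the reduced form of $FU$ (strings of genuinely non-identity morphisms), so that $\mathcal{S}_{\theta}$, and hence each component, is finite. Everything downstream is the bijective bookkeeping already organized by Lemma \ref{lema.2} together with the standard combinatorics of permutahedra recalled above.
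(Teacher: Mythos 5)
Your proposal is correct, and it takes a genuinely different route from the paper. The paper argues by induction on $j-m$: it identifies the boundary $\partial C_{\theta}$ of the component indexed by $\theta$ as a coequalizer glued from products of lower components, which by Lemma \ref{lema.2} and the inductive hypothesis becomes the standard presentation \eqref{DKmapping2} of $\partial\mP^{j-m-1}$ as the union of its facets, and then observes that both $C_{\theta}$ and the permutahedron are cones on their boundaries. You instead identify each component globally and in one step: it is the nerve of the poset of factorizations of $\theta$ into non-identity maps ordered by refinement, which by iterating Lemma \ref{lema.2} is the poset of ordered partitions of the omitted $(j-m)$-element set, i.e.\ the (opposite of the) face poset of $\mP^{j-m-1}$, whose nerve is the barycentric subdivision. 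This is arguably sharper: it names the triangulation explicitly (one can check against Examples \ref{exama.1} that the subdivided interval and hexagon there are exactly barycentric subdivisions), and it replaces the paper's two softer steps (``this coequalizer is $\partial\mP^{j-m-1}$'' and ``$C_{\theta}$ is a triangulation of the cone on its boundary'') by a poset isomorphism. What the paper's route buys is the facet decomposition \eqref{DKmapping2} itself, which is reused later (in \S\ref{con4.5} and Corollary \ref{cor4.8}); your argument recovers it a posteriori from the face poset.

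Two points to nail down. First, your intermediate claim that $\DK(C)(a,b)$ is the nerve of the refinement poset is false for a general small category $C$: the witness of a refinement $\sigma\le\tau$ (the partition of the index set of $\tau$) need not be unique, so distinct simplices of $\DK(C)(a,b)$ can have the same vertex chain (take $C$ with one object and a non-identity idempotent $e$; the chain $(e)\le(e,e)\le(e,e,e)$ supports two distinct $2$-simplices). The statement you actually need is only for $C=\pbDel\op$, where it does hold: the objects occurring in a composable string of non-identity monomorphisms are strictly increasing in cardinality, hence pairwise distinct, so the intermediate objects of the coarser string determine the partition of the finer one uniquely. You should make this uniqueness explicit, since it --- rather than the matching of face and degeneracy operators, which is routine once the simplices are in bijection --- is the real content of your second step. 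Second, you are right that the reduced form of $FU$ (no identity edges in the strings) is needed for the components to be finite; that is what the paper implicitly uses, as its Examples show, so this convention should be fixed at the outset.
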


\begin{proof}
The proof is by induction on $j-m$. The cases of $j-m \le 3$ are covered by Examples \ref{exama.1}.

Suppose that the statement holds for all $j-m$ up to $k$, and assume
$j-m = k+1$. Consider the component $C_{\theta}$ of $\DK(\pbDel \op)(\pbl,\pbm)$ corresponding to $\theta\colon \ppp{\bl} \rightarrow \ppp{\bm}$.  The boundary of $C_{\theta}$ is given by the following coequalizer
\mydiagram[\label{DKmapping1}]{
\coprod_{\theta_{1}\theta_{2}\theta_{3} = \theta} C_{\theta_{1}} \times C_{\theta_{2}} \times C_{\theta_{3}} \ar@<1ex>[rr]^{a_{1}} \ar@<-1ex>[rr]_{a_{2}} && \coprod_{\gamma_{1}\gamma_{2}} C_{\gamma_{1}} \times C_{\gamma_{2}} \ar[rr]_{b} && \partial C_{\theta}~,
}
where the maps $a_{1}$, $a_{2}$, and $b$ are induced by composition. By Lemma \ref{lema.2},
the factorizations of $\theta$ in $\pbDel \op$ are in one-to-one correspondence with partitions of $\{ 0, \cdots j-m-1 \}$. Combining this with the inductive hypothesis, \eqref{DKmapping1} can be identified with
\myvdiag[\label{DKmapping2}]{
\displaystyle \coprod_{P_{1} \cup P_{2} \cup P_{3} = [l-m-1]} \mP^{|P_{1}|} \times  \mP^{|P_{2}|}  \times  \mP^{|P_{3}|} \ar@<1ex>[r] \ar@<-1ex>[r] & \displaystyle \coprod_{Q_{1} \cup Q_{2} = [j-m-1]}  \mP^{|Q_{1}|} \times  \mP^{|Q_{2}|} \ar[r] &\partial C_{\theta}~,
}
where the two left maps are now induced by face inclusion. This coequalizer is $\partial \mP^{j-m-1}$. But $C_{\theta}$ is isomorphic to (a triangulation of) the cone over its boundary (i.e. with cone point corresponding to $\theta$ enclosed in $j-m$ sets of brackets).
Because it is convex, the permutahedron is also the cone over the union of its facets; the result follows.
\end{proof}

%
%
\sect{Differentials for a coherent simplicial object}
\label{cdsscso}

We now describe the differentials for a simplicial object $\xd$ in
a pointed $\infty$-category $X$. This essentially reduces to the case where $X$ is $\Kan$ (see \S \ref{snac}), with the usual (fibrant) enrichment, because
$\Map_{X}(\Sigma^{p}y, \xd)$ is in fact a coherent simplicial space.

For any $u,v\in\Ob(X)$, the distinguished choice of zero map in each pointed space $\Map_{X}(u,v)$ is denoted by `0'.

\begin{lemma}\label{lema.4}
Suppose we have a diagram of simplicial sets
$$
\xymatrix
{
A\ar@{>->}[d]_{i} \ar[r] & X\\
B \ar@{.>}[ur] &
}
$$
where $X$ is a Kan complex. If we can find a lift
$$
\xymatrix
{
|A| \ar[r] \ar[d]_{|i|} & |X| \\
|B| \ar[ur] &
}
$$
in the geometric realization of the diagram,
then the lift exists in the original diagram.
\end{lemma}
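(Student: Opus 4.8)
\textit{Proof idea.} The plan is to transport the hypothesized topological lift back into $\sSet$ via the adjunction $|-|\dashv\Sing$, exploiting two standard facts: $\Sing$ of any space is a Kan complex, and the unit $\eta_X\colon X\to\Sing|X|$ is a weak equivalence for every simplicial set $X$ (see \cite[Ch.~I]{GJ2}).

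First I would transpose the given lift $\ell\colon|B|\to|X|$ to a map $\hat\ell\colon B\to\Sing|X|$. Writing $a\colon A\to X$ for the given map, transposing the identity $\ell\circ|i|=|a|$ (and using naturality of the unit, so that the transpose of $|a|$ is $\eta_X\circ a$) shows that the square with top edge $\eta_X\circ a$, left edge $i$, bottom edge $\hat\ell$ and right edge $\eta_X$ commutes, i.e. $\hat\ell\circ i=\eta_X\circ a$. Now $\Sing|X|$ is a Kan complex, and $X$ is a Kan complex by hypothesis, so $\eta_X$ is a weak equivalence between fibrant objects; factor it in the Kan--Quillen model structure as $\eta_X=p\circ j$ with $j\colon X\hookrightarrow X'$ a trivial cofibration and $p\colon X'\to\Sing|X|$ a trivial fibration. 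Since $i$ is a monomorphism, hence a cofibration, and $p$ is a trivial fibration, the commuting square $A\xrightarrow{j\circ a}X'$, $A\xrightarrow{i}B$, $B\xrightarrow{\hat\ell}\Sing|X|$, $X'\xrightarrow{p}\Sing|X|$ admits a diagonal filler $\tilde\ell\colon B\to X'$ with $\tilde\ell\circ i=j\circ a$. Finally, because $j$ is a trivial cofibration and $X$ is fibrant, $j$ admits a retraction $r\colon X'\to X$ with $r\circ j=\Id_X$ (solve the lifting problem for $\Id_X\colon X\to X$ against $j$ and $X\to\ast$). Then $r\circ\tilde\ell\colon B\to X$ is the desired lift, since its restriction to $A$ is $r\circ\tilde\ell\circ i=r\circ j\circ a=a$.

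The only genuine ingredients are the classical fact that $\eta_X$ is a weak equivalence and the fibrancy of $X$ (used for the retraction $r$); everything else is a formal diagram chase, so there is no real obstacle. An alternative route avoiding the factorization would be to pick a homotopy inverse $\rho\colon\Sing|X|\to X$ (a weak equivalence of Kan complexes is a homotopy equivalence), set $g:=\rho\circ\hat\ell\colon B\to X$, observe $g\circ i=\rho\circ\eta_X\circ a\simeq a$, and correct $g$ to an actual extension of $a$ using the homotopy extension property of the cofibration $i$ into the fibrant $X$; I would prefer the factorization version since it dispenses with bookkeeping of homotopies rel $A$.
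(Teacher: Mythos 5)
Your proof is correct and follows essentially the same route as the paper: both transpose the topological lift along $|-|\dashv\Sing$ and use that the unit $X\to\Sing|X|$ is a weak equivalence of Kan complexes. The only difference is that where the paper passes to the homotopy category and cites \cite[Lemma A.2.3]{Lurie} to rectify the lift, you carry out that rectification explicitly via the factorization $\eta_X=p\circ j$, the lift of $\hat\ell$ against the trivial fibration $p$, and the retraction onto the fibrant $X$ --- a valid and self-contained substitute.
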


\begin{proof}
Let $\Sing$ denote the right adjoint of geometric realization. We have a diagram
$$
\xymatrix
{
A \ar[d] \ar[dr] \ar[rr] && X \ar[dr]_{a}  & \\
B \ar@{.>}[urr] \ar[dr]_{s}& \Sing|A| \ar[rr] \ar[d]^{\Sing|i|} && \Sing|X| \\
& \Sing|B| \ar[urr] &&
}
$$
 $X \rightarrow \Sing|X|$ is a weak equivalence of Kan complexes by \cite[Theorem I.10.10 and Proposition I.11.3]{GJ2}. Thus, we have a commutative diagram
 $$
 \xymatrix
 {
 A \ar[d]_{[i]} \ar[rr] && X \\
 B \ar[urr]_{[a]^{-1} [\Sing|i|] [s]}&&
}
 $$
 in the homotopy category of simplicial sets. The result now follows from \cite[Lemma A.2.3]{Lurie}.
\end{proof}

\begin{corollary}\label{cora.5}
Suppose that we have a map of pointed simplicial sets $\phi\colon\partial \mP^{n} \rightarrow X$. Then the element of $\pi_{n-1}(X)$ determined by $\phi$ vanishes if and only if we can find a lift in the diagram
$$
\xymatrix
{
\partial \mP^{n} \ar[d] \ar[r] & X \\
\mP^{n} \ar@{.>}[ur] &
}
$$
\end{corollary}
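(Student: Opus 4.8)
\textbf{Proof sketch for Corollary~\ref{cora.5}.} The plan is to reduce the statement to the standard fact that a based map out of a sphere extends over the bounding disk exactly when it is nullhomotopic, and then to pass between spaces and simplicial sets via Lemma~\ref{lema.4}. The geometric input is that, by Proposition~\ref{lema.3}, $\mP^{n}$ is a (triangulation of the) $n$-permutahedron, a convex $n$-dimensional polytope which is moreover a triangulation of the cone on $\partial\mP^{n}$; hence the geometric realization $|\mP^{n}|$ is homeomorphic to the disk $D^{n}$, carrying $|\partial\mP^{n}|$ onto the boundary sphere $S^{n-1}$, and the inclusion $|\partial\mP^{n}|\hookrightarrow|\mP^{n}|$ to $S^{n-1}\hookrightarrow D^{n}$. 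Fixing such a homeomorphism that sends the basepoint of $\partial\mP^{n}$ to a basepoint of $S^{n-1}$, the pointed map $\phi$ realizes to a based map $|\phi|\colon S^{n-1}\to|X|$ whose class in $\pi_{n-1}(|X|)=\pi_{n-1}(X)$ is, by definition, the element ``determined by $\phi$''.

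Now I would run the two implications as a chain of equivalences. If the lift $\mP^{n}\to X$ exists in the original diagram, applying $|-|$ produces an extension of $|\phi|$ over $D^{n}$, i.e.\ a (free) nullhomotopy of $|\phi|$; since $|\phi|$ is based and the $\pi_{1}$-action on free homotopy classes fixes $0$, this forces the class of $|\phi|$ in $\pi_{n-1}(X)$ to vanish. Conversely, if that class vanishes, $|\phi|$ is based-nullhomotopic, hence extends over the cone $D^{n}$; this exhibits a lift in the realized square $|\partial\mP^{n}|\to|\mP^{n}|$, $|X|$. Since $\partial\mP^{n}\hookrightarrow\mP^{n}$ is a monomorphism of simplicial sets and $X$ is a Kan complex, Lemma~\ref{lema.4} then yields a lift in the original diagram. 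Combining these two steps gives the asserted equivalence.

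The only delicate point is the basepoint bookkeeping in the realization step: one must check that ``the element of $\pi_{n-1}(X)$ determined by $\phi$'' is precisely the class of $|\phi|$ under a basepoint-preserving identification $|\partial\mP^{n}|\cong S^{n-1}$, and that extension of a \emph{based} map over the (unbased) disk is equivalent to vanishing of that class in $\pi_{n-1}$ — this is where one uses that the only free homotopy class realized by the constant map is $0$. I expect no genuine obstacle here; once the permutahedron is identified with a disk, everything reduces to Lemma~\ref{lema.4} and elementary homotopy theory of $(D^{n},S^{n-1})$.
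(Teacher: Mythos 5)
Your argument is correct and is exactly the one the paper intends: the corollary is stated without proof immediately after Lemma~\ref{lema.4} precisely because it follows by identifying $|\mP^{n}|$ with the disk $D^{n}$ (the permutahedron being a convex $n$-dimensional polytope) and $|\partial\mP^{n}|$ with $S^{n-1}$, reducing to the standard extension criterion and then transferring the lift back to simplicial sets via that lemma. Your basepoint remark (that a based map which is freely nullhomotopic has vanishing class, since the $\pi_{1}$-orbit of $0$ is $\{0\}$) is the right way to close the only gap one might worry about.
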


\begin{mysubsection}{The maps $\mT_{\theta}$}
\label{con4.5}
We are now in a position to describe the central construction in our description of the differentials:

Suppose that we have a diagram $D\colon\DK(\pbDel\op_{n-r+1, n} \times\bbo) \rightarrow \mX$
representing $\lra{f}\in E\sp{1}\sb{n,p}$ as in \eqref{diagnk} (or the equivalent description \eqref{diagnkeq}).
For each map $\theta\colon\ppp{\bnp}\rightarrow\ppp{\bn-\br}$ in $\pbDel\op$, we now construct a map of pointed spaces $\mT_{\theta}\colon\partial\mP^{r}\rightarrow \Map_{\mX}(\Sigma^{p}y, x_{n-r})$. If $\mT_{\theta}$ extends to the interior of $\mP^{r}$, we choose an extension
and denote it by $\bmT_{\theta}$.

The map $\mT_{d_{0}\cdots d_{r}}$  determines an element of $\pi_{r}\Map_{\mX}(\Sigma^{p}y, x_{n-r})$, by the loop-suspension adjunction. We will show in Theorem \ref{thm4.7} that this is represents $\fd^{r}\lra{f}$.

As in Section \ref{cdkrd}, the vertices of the permutahedron correspond to factorizations of $\theta$ as face maps. For $r > 1$, the permutahedron will be pointed by the unique vertex of the form $d_{i_{0}} d_{i_{1}} \cdots d_{i_{r}}$ with $i_{0} < i_{1} \cdots < i_{r}$.

We begin our inductive procedure by setting $\bmT_{d_{0}} = f$, and $\bmT_{d_{i}} = 0$ for
$i\geq 1$. For each $i < j$ we have
$$
\bmT_{d_{i}d_{j}} =
\begin{cases}
        \Id_{0}  & \mbox{if } i > 0 \\
        G_{j} & \mbox{if } i = 0
    \end{cases}
$$
In either case, we think of $\bar{\mT}_{d_{i}d_{j}}$ as a map pointed by the zero map $d_{i}d_{j} = 0$.

The first non-trivial case of this construction is $\theta\colon\ppp{\bnp}\rightarrow\ppp{\bn-\btw}$.
In this case, we write $\theta = d_{i}d_{j}d_{k}, i < j < k$. If $i\geq 1$, we set $\mT_{\theta} = 0$, so it trivially extends to $\bar{\mT}_{\theta}$. Otherwise, we let $\mT_{d_{0}d_{j}d_{k}}$ denote the map from the boundary of the hexagon $\partial \mP^{2}$ constructed in \eqref{dee2lax}. Thus, $\mT_{d_{0}d_{1}d_{2}}$ represents $\fd^{2}\lra{f}$, and $\bar{\mT}_{d_{0}d_{1}d_{2}}$ exists if and only if $\fd^{2}\lra{f}$ vanishes by Corollary \ref{cora.5}.

In general, suppose that $\bmT_{\theta''}$ exists for each $\theta''\colon\ppp{\bnp} \rightarrow\ppp{\bn-\br+\bo}$.
For each $\theta'\colon\pbm \rightarrow \ppp{\bm'}$ with $m \le n$, we write $H_{\theta'}$ for the image of the component corresponding to $\theta'$ of $\DK(\pbDel\op)(m, m')$ under
$$
\xd\colon\DK(\pbDel\op)(m, m') \rightarrow \Map_{\mX}(x_{m}, x_{m'}).
$$

Let $\theta\colon\ppp{\bnp} \rightarrow\ppp{\bn-\br}$. The facets of $\mP^{r}$ correspond to factorizations $(\theta') (\theta'')$ by Proposition \ref{lema.3} and its proof. We let $\mT_{\theta}$ be the map $\partial\mP^{r} \rightarrow \Map_{X}(\Sigma^{p}y, x_{n-r})$, whose value on the facet $(\theta')(\theta'')$ of $\mP^{r}$ is the composite of
$$
\bar{\mT}_{\theta''} \times H_{\theta'} \subseteq \Map_{\mX}(\Sigma^{p}y, x_{m}) \times \Map_{\mX}(x_{m}, x_{n-r}) \rightarrow \Map_{\mX}(\Sigma^{p}y, x_{n-r})
$$
(inclusion followed by the composition map).
\end{mysubsection}

\begin{example}\label{exam4.4}
  Let $\xd$ be a homotopy coherent simplicial object in the simplicial category $\mX$, and suppose that we have an element $\lra{f}\in\pi_{0}\Map_{\mX}(\Sigma^{p}y,x_{n})$
represented by $f\colon\Sigma^{p}(y) \rightarrow x_{n}$.
  If $\fd^{1}\lra{f}=0$, $\lra{f}$ represents an element in $E^{2}\sb{n,p}$ of the spiral spectral sequence associated to $y$ and $\xd$, we have a homotopy commutative diagram
as in \eqref{altdia} with $r = 2$.

Since $\xd$ is homotopy coherent, we have homotopies
$H_{i, j} \colon d_{i}d_{j} \simeq d_{j-1}d_{i}\colon x_{n} \rightarrow x_{n-2}$ for $i < j \le  k$.
These yield a map $K_{i,j} \colon\partial \mP^{2} \rightarrow \Map_{X}(\Sigma^{p}y, x_{n-2})$,
on boundary of the hexagon, described by the following diagram:
\mysdiag[\label{dee2lax}]{
&  \text{\footnotesize$d_{k-2}d_{j-1}f$}   & \\
0 = d_{k-2}d_{0}d_{j} \ar@{-}[ur]^{d_{k-2}G_{j}}& & \ar@{-}[ul]_{H_{j-1, k-1}f} d_{j-1}d_{k-1}f\\
0 = d_{0}d_{k-1}d_{j} \ar@{-}[u] & & \ar@{-}[u]_{d_{0}G_{k}} d_{j-1}d_{0}d_{k} = 0 \\
& \ar@{-}[ur] 0 = d_{0}d_{j}d_{k} \ar@{-}[ul] &
}

In the strict simplicial case, $H_{0, 1}$ is just the identity, so $K_{0, 1}$
reduces to \eqref{dee2strict}. Thus, it follows $K_{0, 1}$ indeed represents the value of the differential $\fd^{2}$ applied to $\lra{f}$ in the spiral spectral sequence.

If $\fd^{2}\lra{f}$ vanishes, we have an extension
$\bar{K}_{0, 1} \colon\mP^{2} \rightarrow \Map_{\mX}(\Sigma^{p}y, x_{n-2})$ to the interior of
the hexagon \eqref{dee2lax}.
In this case, we claim that $\fd^{3}\lra{f}$ is represented by a map $\partial \mP^{3} \rightarrow \Map_{\mX}(\Sigma^{p}y, x_{n-3})$ as depicted in Figure \ref{fopenperm}.
This is a planar version of the boundary of the $3$-permutahedron of Figure \ref{ftperm}, in which we have replaced the first (rightmost) $d_0$ by $f$, and all other rightmost face maps by $0$, using the equivalence of \eqref{diagnk} to \eqref{altdia} in order to think of the former as a restricted simplicial object in $\mX$. One of the squares of Figure \ref{ftperm}, appears as the exterior of Figure \ref{fopenperm} (indexed by $0$).
The maps from the faces of the permutahedron with the specified
boundaries automatically exist, except possibly for those labelled
$(d_{0})(d_{0}d_{1}f)$ and $(d_{0})(d_{0}d_{2}f)$.

The maps  $H_{i j}$ and $H_{i j k}$ are higher coherences (coming from the fact that $\xd$ is $\infty$-homotopy coherent, in the terminology of \cite{Dwyer-Kan-Smith});
in the strict simplicial case, $H_{i, j}$ is the identity homotopy, and the maps so labelled can be identified with the faces labelled $F_{0, 2}$ and $F_{1, 2}$ in \eqref{dee3strict} above. Thus the composite homotopies $H_{i,j}G_{k}$ reduce simply to $G_{k}$, and the diagram above reduces to \eqref{dee3strict}.
This indicates that the differentials described in \S \ref{exam4.3} for strict simplicial objects are a special case of those described here in the more general (coherent) case.
\end{example}

\stepcounter{theorem}
\begin{figure}[htbp]
\begin{center}
\begin{tikzpicture}
                 \node  at (0, 0) (A) {\text{\tiny$0$}};
         \node at (0, 2) (B) {\text { \tiny$d_{0}d_{0}d_{0}f$}};
         \node at (2, 0) (C) {\text{\tiny$0$}};
         \node at (2, 2) (D) {\text { \tiny$d_{0}d_{1}d_{0}f$}};
         \node at (3, 3.628) (E) {\text{\tiny$d_{0}d_{0}d_{2}f$}};
         \node at (4, 2) (F) {\text{\tiny$0$}};
         \node at (3, -1.628) (G) {\text{\tiny$0$}} ;
         \node at (4, 0) (H) {\text{\tiny$0$}};
         \node at (-1, 3.628) (2HEX1) {\text { \tiny$d_{0}d_{0}d_{1}f$}};
         \node at (-2, 2)  (2HEX2) {\text{\tiny$0$}};
         \node at (-1, -1.628) (2HEX3) {\text{\tiny$0$}} ;
         \node at (-2, 0) (2HEX4) {\text{\tiny$0$}};
         \node at (0, 5.256) (3HEX1) {\text { \tiny$d_{0}d_{1}d_{1}f$}} ;
         \node at (2, 5.256) (3HEX2) {\text { \tiny$d_{0}d_{1}d_{2}f$}};
          \node at (0, -3.256) (4HEX1) {\text{\tiny$0$}} ;
           \node at (2, -3.256) (4HEX2) {\text{\tiny$0$}};
           \node at (-3, 6) (2SQ1) {\emph{\tiny$0$}} ;
          \node at (5, 6) (3SQ1) {\emph{\tiny$0$}};
            \node at (-3, -4) (4SQ1) {\emph{\tiny$0$}} ;
          \node at (5, -4) (5SQ1) {\emph{\tiny$0$}};
                \node at (-3.5, 7) (5HEX1) {\emph{\tiny$0$}} ;
           \node at (5.5, 7) (5HEX2) {\emph{\tiny$0$}};
                 \node at (-3.5, -5) (6HEX1) {\emph{\tiny$0$}} ;
           \node at (5.5, -5) (6HEX2) {\emph{\tiny$0$}};
         \draw (A) -- (B) node[midway,sloped, above] {\text{\tiny$d_{0}d_{0}G_{0}$}};
         \draw (A) -- (C) ;
         \draw (B) -- (D);
         \draw (C) -- (D) node[midway,sloped, below] {\text{\tiny$d_{0}d_{1}G_{0}$}};
         \draw (D) -- (E) node [midway, sloped, below] {\text{\tiny$d_{0}H_{02}f$}};
         \draw (E) -- (F) node [midway, sloped, above] {\text{\tiny$d_{0}d_{0}G_{2}$}};
         \draw (C) -- (G) ;
         \draw (G) -- (H);
         \draw (F) -- (H);
         \draw (B)-- (2HEX1) node [midway, sloped, above] {\text{\tiny$d_{0}H_{12}f$}};
         \draw (2HEX1) -- (2HEX2) node [midway, sloped, above] {\text{\tiny$d_{0}d_{0}G_{1}$}};
         \draw (A) -- (2HEX3);
         \draw (2HEX2) -- (2HEX4);
         \draw (2HEX3) -- (2HEX4);
         \draw (2HEX1) -- (3HEX1) node [midway, sloped, below] {\text{\tiny$H_{01}d_{1}f$}};
         \draw (3HEX1) -- (3HEX2) node [midway, sloped, below] {\text{\tiny$d_{0}H_{02}f$}};
         \draw (E) -- (3HEX2) node [midway, sloped, below] {\text{\tiny$H_{0, 1}d_{2}f$}};
         \draw (2HEX3) -- (4HEX1);
         \draw (4HEX1) -- (4HEX2);
         \draw (G) -- (4HEX2);
         \draw (2HEX2) -- (2SQ1)  ;
         \draw (3HEX1) -- (2SQ1) node [midway, sloped, above] {\text{\tiny$d_{0}d_{1}G_{1}$}};
         \draw (3HEX2) -- (3SQ1) node [midway, sloped, above] {\text{\tiny$d_{0}d_{1}G_{2}$}};
         \draw (F) -- (3SQ1) ;
         \draw (4HEX1) -- (4SQ1);
         \draw (2HEX4) -- (4SQ1);
         \draw (H) -- (5SQ1);
         \draw (4HEX2) -- (5SQ1);
         \draw (2SQ1) -- (5HEX1);
         \draw (3SQ1) -- (5HEX2);
         \draw (5HEX1) -- (5HEX2);
         \draw (4SQ1) -- (6HEX1);
         \draw (5SQ1) -- (6HEX2);
         \draw (6HEX1) -- (6HEX2);
         \draw (5HEX1) -- (6HEX1);
         \draw (5HEX2) -- (6HEX2);

        \node at (1, 1.25) {\text{\tiny$H_{01}G_{0}$}};
        \node at (1, 0.75) {\boxed{\text{\tiny$(d_{0}d_{1})(d_{0}d_{1})$}}};
          \node [rotate = 90] at (3.2, 0.75) {\boxed{\text{\tiny$(d_{0})(d_{0}d_{1}f)$}}};
        \node at (1, -1.75) {\text{\tiny$H_{012} d_{1} = H_{012} 0$}};
        \node at (1, 3.75) {\text{\tiny$H_{012} f$}};
        \node at (1, 3.25)  {\boxed{\text{\tiny$(d_{0}d_{1}d_{2})(d_{0})$}}};
        \node at (-1, 0.75) {$\bar{K}_{0, 1}$} ;
        \node at (1, -4.5) {\boxed{\text{\tiny$d_{0}(0) = 0$}}};
         \node at (1,  6) {\boxed{\text{\tiny$d_{0}(d_{0}d_{2}f)$}}};
        \node at (4.75, 1.8) {\text{\tiny$H_{0, 1, 2}0$}};
        \node at (4.75, 1.2) {\boxed{\text{\tiny$(d_{0}d_{1}d_{2})d_{2}$}}};
        \node at (-2.7, 1.2) {$0$};
        \node at (-1.8, -2.2) {$0$};
        \node at (4, -2.2) {$0$};
        \node at (-1.8, 5) {\text{\tiny$H_{0, 1}G_{1}$}};
        \node at (4, 5) {\text{\tiny$H_{0, 1}G_{2}$}};
        \node at (-1.5, 4.6) {\boxed{\text{\tiny$(d_{0}d_{1})(d_{0}d_{1})$}}};
        \node at (3.7, 4.6) {\boxed{\text{\tiny$(d_{0}d_{1})(d_{0}d_{2})$}}};
\end{tikzpicture}
\end{center}
\caption{Representing $\fd^{3}\lra{f}$ as a map out of $\partial \mP^{3}$}
\label{fopenperm}
\end{figure}

\begin{remark}\label{rmk4.6}
The maps denoted by $H_{ij}$ and $H_{ijk}$ in \S \ref{exam4.4} correspond to $H_{d_{i}d_{j}}$ and $H_{d_{i}d_{j}d_{k}}$ in the notation of \S \ref{con4.5}.
Thus, the map from the boundary of the $3$-permutahedron construction in \S \ref{exam4.4} is precisely $\mT_{d_{0}d_{1}d_{2}d_{3}}$ in the notation of \S \ref{con4.5}.

Note also that the restrictions of $\mT_{\theta}$ to the various facets
of $\partial \mP^{r}$ have a uniform description in terms of decompositions $\theta=\theta_{1}\circ\dotsc\circ\theta_{\ell}$ (corresponding to viewing \eqref{diagnkeq} simply as a truncated restricted simplicial diagram). However, in fact there are three different types of factors involved:
\begin{enumerate}
\renewcommand{\labelenumi}{(\alph{enumi})}
\item The maps $\bmT_{\theta\sb{i}}$ for all but the rightmost factor $\theta_{\ell}$ are the given coherence homotopies for $\xd$, which did not appear in \S \ref{exam4.3} when $\xd$ was a strict simplicial object.
\item If the map $\theta_{\ell}:\ppp{\bnp}\to\pbm$ does not have $f$ (that is,
$d\sp{n+1}\sb{0}:\ppp{\bnp}\to\pbn$, in $\Delta\op$) as a factor, then $\bmT_{\theta\sb{\ell}}=0$.
\item The remaining maps $\bmT_{\theta\sb{\ell}}$, with $\theta\sb{\ell}$ decomposable and having $d\sb{0}\sp{n+1}$ as a factor, are the only ones which also in appear
in \S \ref{exam4.3} (and thus involve choices).
\end{enumerate}
See Figure \ref{fopenperm} for an illustration.
\end{remark}

We can now state our main technical result:

\begin{theorem}\label{thm4.7}
Suppose that we have a diagram $D\colon\DK(\pbDel\op_{n-r+1, n} \times\bbo) \rightarrow \mX$
representing $\lra{f}$ as in \eqref{diagnk}. By the suspension-loop adjunction, the map $\mT_{d_{0}\cdots d_{r}}$  determines an element $\alpha$ of $\pi_{r}\Map_{\mX}(\Sigma^{p}y, x_{n-r})$ representing $\fd^{r}\lra{f}$, and $\alpha$ vanishes if and only if $D$ extends to a diagram $\DK(\pbDel_{n-r, n}\op \times \bbo) \rightarrow \mX$ as in \eqref{diagnk}.
\end{theorem}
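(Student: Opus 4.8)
The plan is to prove both assertions simultaneously by induction on $r$, the cases $r\le 3$ being essentially the content of \S\ref{exam4.3} and Example \ref{exam4.4} (once one identifies, via Remark \ref{rmk4.6}, the maps built there with $\mT_{d_0 d_1 d_2}$ and $\mT_{d_0 d_1 d_2 d_3}$). By Corollary \ref{cora.5}, the pointed map $\mT_{d_0\cdots d_r}\colon\partial\mP^{r}\to\Map_{\mX}(\Sigma^{p}y, x_{n-r})$ has a well-defined homotopy class, which we reindex as the element $\alpha$ of the statement via the suspension--loop adjunction, and $\alpha=0$ if and only if $\mT_{d_0\cdots d_r}$ extends over $\mP^{r}$ to some $\bmT_{d_0\cdots d_r}$. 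So the theorem splits into two claims: (i) $\alpha$ represents $\fd^{r}\lra f$; and (ii) $\mT_{d_0\cdots d_r}$ extends over $\mP^{r}$ if and only if $D$ extends to $\DK(\pbDel\op_{n-r, n}\times\bbo)$ as in \eqref{diagnk}.

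For (i), I would show that the construction of \S\ref{con4.5} restricts to that of \S\ref{con4.2} in the Reedy fibrant, strictified case, so that $\alpha$ coincides with the class obtained there by iterated lifting in the fibre tower $Z_{k}(\xd)\to C_{k}(\xd)\to Z_{k-1}(\xd)$ --- which represents $\fd^{r}\lra f$ by the exact-couple recipe of Definition \ref{def3.1}. Concretely, after replacing $\Map_{\mX}(y,\xd)$ by a Reedy fibrant model and strictifying the coherence homotopies of $\xd$ as far as possible (so $G_1$ is the identity nullhomotopy and the $H_{ij}, H_{ijk},\dots$ are identities, exactly as in \S\ref{exam4.3}), the facet decomposition of $\partial\mP^{r}$ from Proposition \ref{lema.3} degenerates: facets labelled by zero maps or by coherence homotopies of $\xd$ (types (a) and (b) of Remark \ref{rmk4.6}) collapse, and what survives is precisely the iterated composite of the nullhomotopies $H, H', H'',\dots$ of \S\ref{con4.2}. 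Transporting an arbitrary $D$ to the strictified model along the Dwyer--Kan equivalence --- under which both the spectral sequence (Remark \ref{rmk3.4}) and the $\mT_\theta$ are compatible --- then yields (i) in general.

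For (ii), I would analyse the relative cell structure of $\DK(\pbDel\op_{n-r+1, n}\times\bbo)\hookrightarrow\DK(\pbDel\op_{n-r, n}\times\bbo)$. Extending $D$ one step means defining the homotopy-coherent diagram on the $\bHom$-cells of morphisms into the single new bottom-row object $x_{n-r}$; by Proposition \ref{lema.3} these cells are triangulated products of permutahedra. Cells indexed by a morphism having a zero object as source or target are forced to be $0$, and cells lying inside the given subdiagram $\xd$ are already defined; a cell indexed by a morphism $\theta\colon\ppp{\bnp}\to\ppp{\bn-\br}$ restricts on each facet $(\theta')(\theta'')$ to the composite $\bmT_{\theta''}\times H_{\theta'}$ of \S\ref{con4.5}, hence is filled as soon as every required $\bmT_{\theta''}$ has been chosen. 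By the (strengthened) inductive hypothesis --- that at each earlier stage $s<r$ all the maps $\mT_\theta$ with at most $s+1$ factors extend, the only obstructions among them being the differentials $\fd^{s}\lra f$, which vanish because $\lra f$ survives to $E^{r}$, while the remaining $\mT_\theta$ vanish identically by the simplicial identities --- every needed $\bmT_{\theta''}$ exists. Therefore the unique remaining obstruction to extending $D$ is the fill-in of the top cell $\mP^{r}$ of $\mT_{d_0\cdots d_r}$, namely the class $\alpha$; conversely a choice of $\bmT_{d_0\cdots d_r}$ assembles with the $\bmT_{\theta''}$ and the $\xd$-cells into the desired extension of $D$.

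\textbf{Main obstacle.} The delicate point is the bookkeeping in (ii): verifying that, once the forced and inductively-available cells are in place, exactly one genuine obstruction remains and that it is $\alpha$ --- equivalently, that every $\mT_\theta$ with $\theta\colon\ppp{\bnp}\to\ppp{\bn-\br}$ other than $d_0 d_1\cdots d_r$ extends automatically. This requires a careful matching, via the simplicial identities and Lemma \ref{lema.2}, of the facets of the various permutahedra with the three kinds of factor listed in Remark \ref{rmk4.6}, together with a check that the degeneration in (i) reproduces the iterated fibration-lifting of \S\ref{con4.2} on the nose. The cogroup hypothesis on $y$ enters only to make the relevant mapping spaces grouplike, so that the exact couple and the reindexing of $\alpha$ are defined.
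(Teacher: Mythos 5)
Your proposal is correct and follows essentially the same route as the paper: induction on $r$ with reduction to the Reedy fibrant (strict) case, the identification of $\alpha$ with the exact-couple differential of \S \ref{con4.2}, and the facet bookkeeping showing that only the top permutahedron $\mP^{r}$ for $d_{0}\cdots d_{r}$ carries a genuine obstruction. Your claims (i) and (ii), including the ``main obstacle'' you isolate, are precisely what the paper packages as Corollary \ref{corb.2} and Lemma \ref{lemb.1}, combined with Corollary \ref{cora.5} and the observation that the remaining $\bHom$-sets of $\DK(\pbDel_{n-r,n}\op\times\bbo)$ are $(r-1)$-skeletal.
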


To prove this, we require the following:

\begin{lemma}\label{lemb.1}
Let $r \ge 3$, and suppose that $\bmT_{\theta}$ exists for each
$\theta\colon\ppp{\bnp}\rightarrow\ppp{\bn-\br+\bo}$, so that for each
$\theta\colon\ppp{\bnp}\rightarrow\ppp{\bn-\br}$ we can define $\mT_{\theta}$ by \S \ref{con4.5}.
If $\mT_{d_{0} \cdots d_{r}}$ has an extension $\bar{\mT}_{d_{0} \cdots d_{r}}$ to $\mP^{r}$, then   $\bmT_{\theta}$ exists for each $\theta\colon\ppp{\bn+\bo} \rightarrow \ppp{\bn-\br}$.
\end{lemma}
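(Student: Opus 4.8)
The plan is to reduce the statement to a vanishing assertion in homotopy and then analyze $\theta$ according to whether the edge carrying $f$ (namely $d^{n+1}_{0}\colon[n+1]\to[n]$) occurs as a factor of it.

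First I would observe that, under the standing hypotheses, $\mT_{\theta}$ is already \emph{defined} on all of $\partial\mP^{r}$ for every $\theta\colon[n+1]\to[n-r]$: a facet of $\mP^{r}$ corresponds to a factorization $\theta=\theta'\circ\theta''$ with $\theta''\colon[n+1]\to[m]$ and $n-r+1\le m\le n$ (Proposition \ref{lema.3}), and the prescribed value $\bmT_{\theta''}\times H_{\theta'}$ makes sense because $\bmT_{\theta''}$ is available --- the target of $\theta''$ lies at level $\ge n-r+1$, so it is part of the inductive data --- while $H_{\theta'}$ comes from $\xd$. Since $\partial\mP^{r}\simeq S^{r-1}$, Corollary \ref{cora.5} tells us that $\bmT_{\theta}$ exists if and only if the class $\lra{\mT_{\theta}}$ determined by $\mT_{\theta}$ in the relevant homotopy group of $\Map_{\mX}(\Sigma^{p}y,x_{n-r})$ vanishes. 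So it suffices to show $\lra{\mT_{\theta}}=0$ for every $\theta\colon[n+1]\to[n-r]$ in $\pbDel\op$.

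\textbf{Case 1: $d^{n+1}_{0}$ is not a factor of $\theta$.} Then in every factorization $\theta=\theta'\circ\theta''$ the right-hand factor $\theta''$ likewise does not have $d^{n+1}_{0}$ as a factor, so $\bmT_{\theta''}=0$ by the construction in \S\ref{con4.5} (cf.\ Remark \ref{rmk4.6}(b)); since a zero map post-composed with any map is again the distinguished zero map, $\mT_{\theta}$ is the constant zero map on $\partial\mP^{r}$, and $\bmT_{\theta}:=0$ extends it. \textbf{Case 2: $\theta=\bar\theta\circ d^{n+1}_{0}$ for a unique $\bar\theta\colon[n]\to[n-r]$.} The distinguished member of this family is the iterated face map $\theta_{0}=d_{0}\cdots d_{r}$, for which $\bmT_{\theta_{0}}$ exists by hypothesis, i.e.\ $\lra{\mT_{\theta_{0}}}=0$. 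The crux is to prove that the classes $\lra{\mT_{\theta}}$, for $\theta$ ranging over this family, all coincide up to sign, whence each equals $\pm\lra{\mT_{\theta_{0}}}=0$. Morally this holds because each $\mT_{\theta}$ runs the recipe of \S\ref{con4.5} along the various factorizations of a composite computing $\fd^{r}\lra{f}$, and the higher homotopies needed to pass between different factorization patterns are precisely what the coherence data of $\xd$ supply --- recall that the mapping spaces $\DK(\pbDel\op)([m],[m'])$ are the triangulated permutahedra of Proposition \ref{lema.3} --- together with the coherences already packaged in the diagram $D$. Concretely I would produce a homotopy $\partial\mP^{r}\times\Delta^{1}\to\Map_{\mX}(\Sigma^{p}y,x_{n-r})$ from $\mT_{\theta}$ to $\mT_{\theta_{0}}$ by an inner induction over the face poset of $\mP^{r}$: having reconciled the two prescriptions over the $(k-1)$-skeleton, the remaining discrepancy on a $k$-face is again a map out of a lower permutahedron which bounds, the fill being a coherence simplex of $\xd$ or of $D$, supplied via the cone/convexity argument underlying Corollary \ref{cora.5}.

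The main obstacle is Case 2, and within it the bookkeeping required to make the reconciling homotopies glue coherently over the whole of $\partial\mP^{r}$. This is exactly the point at which it matters that we work over $\DK(\pbDel\op)$ rather than the strict category $\pbDel\op$: the extra simplices in the Dwyer--Kan mapping spaces are exactly the higher homotopies that the comparison consumes.
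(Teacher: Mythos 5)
Your reduction via Corollary \ref{cora.5} and your Case 1 are sound and match the paper's treatment of the maps whose ascending form $d_{i_0}\cdots d_{i_r}$ has $i_0>0$. The gap is in Case 2: the intermediate claim that the classes represented by $\mT_{\theta}$, as $\theta$ ranges over the maps admitting $d_0^{n+1}$ as a factor, all coincide up to sign is false, and the reconciling homotopy you propose cannot be built from the coherences of $\xd$ and $D$. After Reedy fibrant replacement one may take the nullhomotopies $G_i$ ($i>0$) to be constant and $G_0$ to factor through the Moore chains $C_{n-1}(\xd)$, i.e.\ to be killed by post-composition with every $d_i$, $i\geq 1$ (Lemma \ref{lem4.1}); a direct computation (already visible for the hexagons $\mT_{d_0d_jd_k}$) then shows that $\mT_{\theta}$ is \emph{identically zero} for every member of your family except $\theta_0=d_0d_1\cdots d_r$, whereas $\mT_{\theta_0}$ represents $\fd^{r}\lra{f}$, which is non-zero in general. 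Since your homotopy $\partial\mP^{r}\times\Delta^{1}\to\Map_{\mX}(\Sigma^{p}y,x_{n-r})$ would be assembled only from data available \emph{before} one knows that $\bar{\mT}_{d_0\cdots d_r}$ exists, its existence would force $\fd^{r}\lra{f}=0$ unconditionally, for every $f$ and every $r$ --- which is absurd. The concrete failure of your inner induction is that the facets of $\mP^{r}$ for $\mT_{\theta_0}$ involve the nullhomotopy $G_0$ of $d_0f$, while those for, say, $\mT_{d_0d_2d_3\cdots}$ involve only the $G_i$ with $i\geq 1$ and coherences of $\xd$; no datum in $D$ or in $\xd$ relates $G_0$ to these, so the discrepancy on a face does not bound.

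The correct mechanism, and the one the paper uses, is the ``identically zero'' computation rather than a comparison within the family: one shows inductively that each $\bar{\mT}_{d_0\cdots d_k}$ may be chosen to factor through the Moore chains object in the appropriate degree, so that a facet $(\theta')(\theta'')$ of $\mT_{\theta}$ can be non-zero only when $\theta''$ is an initial segment $d_0\cdots d_k$ \emph{and} the rightmost face map of $\theta'$ is again $d_0$. This singles out $\theta_0=d_0\cdots d_r$ as the only $\theta$ with a possibly non-vanishing facet; every other $\mT_{\theta}$ in your Case 2 is the constant zero map and extends trivially, and $\mT_{\theta_0}$ extends by hypothesis. No comparison between different members of the family is needed, and none is available.
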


\begin{proof}
As in \S \ref{exam4.3}, we may assume that the simplicial object
$\pbn\mapsto \Map_{X}(\Sigma^{p}y, x_{n})$ is Reedy fibrant (and in particular strict), and proceed by induction on $r$:

For $r = 3$, let $\theta = d_{i}d_{j}d_{k}$ for $i < j < k$. The map $\mT_{d_{i}d_{j}d_{k}}$ is zero if $i > 0$, and therefore trivially admits an extension $\bmT_{d_{i}d_{j}d_{k}}$. Otherwise, the hypothesis that the above diagram is strict means that the map $H_{i, j}$ is the identity, and $\mT_{d_{0}d_{j}d_{k}} = d_{j-1}G_{k-1} \circ (d_{k-2}G_{j-1})^{-1}$. It follows from the Lemma \ref{lem4.1} that $G_{0}$ factors through $C_{n-1}(\xd)$, and $G_{i} = 0$ for all $i > 0$. Thus, the only possible situation where $\mT_{d_{0}d_{j}d_{k}}$ is non-zero is $(i, j, k) = (0, 1, 2)$ and the result follows.

The above discussion implies that $\mT\sb{d_{0}d_{1}d_{2}}$ factors through $C_{n-2}(\xd)$. Lemma \ref{lem4.1} also implies that we can choose $\bar{\mT}_{d_{0}d_{1}d_{2}}$ to factor through $C_{n-2}(\xd)$.

For the general case, suppose that $\bmT_{d_{0} \cdots d_{r-1}}$ factors through $C_{n-r+1}(\xd)$ and for every other map $\theta'\colon\ppp{\bnp}\rightarrow\ppp{\bn-\br+\bo}$ we have $\bmT\sb{\theta} = 0$. Suppose that $\theta\colon\ppp{\bnp}\rightarrow\ppp{\bn-\br}$. Then the only possibly non-zero facets of $\mT_{\theta}$ are those corresponding to factorizations $(\theta' )(d_{0} \cdots d_{k})$ of $\theta$. Since we have a strict simplicial object, the corresponding facet of $\mT_{\theta}$ is $\theta' \circ \bar{\mT}_{d_{0}, \cdots d_{k}}$.

 We can write $\theta = d_{i_{1}} d_{i_{2}} \cdots d_{i_{l}}$, with $i_{1} < i_{2} \cdots < i_{l}$ and we can show inductively that $\bar{\mT}_{d_{0} \cdots d_{k}}$ factors through $C_{n-k+1}(\xd)$. Thus, $\mT_{d_{0}d_{1} \cdots d_{r}}$ is the only such map which is non-zero on some face; that is, the face corresponding to $(d_{0})(d_{0} \cdots d_{r-1})$. The result follows.
\end{proof}

\begin{corollary}\label{corb.2}
An element $\lra{f}\in E^{1}\sb{n,p}$ represented by $f\colon\Sigma^{p}y \rightarrow x_{n}$ survives to the $E^{r}$-page of the spectral sequence if $\mT_{d_{0} \cdots d_{r-1}}$ is nullhomotopic, and $\fd^{r}\lra{f}$ is then represented by $\mT_{d_{0} \cdots d_{r}}$
\end{corollary}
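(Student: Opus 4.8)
The plan is to deduce the Corollary from Lemma \ref{lemb.1}, Corollary \ref{cora.5}, and the inductive construction of the maps $\mT_{\theta}$ in \S \ref{con4.5}, by induction on $r$. The base case $r=2$ is Example \ref{exam4.4}: there $\mT_{d_{0}d_{1}}=f$ (or rather the zero map $d_{1}\sim 0$ together with $f$) being trivial means exactly that $\fd^{1}\lra{f}=0$, so $\lra{f}$ survives to $E^{2}$, and $\fd^{2}\lra{f}$ is represented by $\mT_{d_{0}d_{1}d_{2}}=K_{0,1}$ on $\partial\mP^{2}$; Corollary \ref{cora.5} identifies the class of this boundary map in $\pi_{1}\Map_{\mX}(\Sigma^{p}y,x_{n-2})$ with the obstruction to extending over $\mP^{2}$.

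For the inductive step, assume $\lra{f}$ survives to $E^{r-1}$, that $\fd^{r-1}\lra{f}$ is represented by $\mT_{d_{0}\cdots d_{r-1}}$, and that this map is nullhomotopic. By Corollary \ref{cora.5}, nullhomotopy of $\mT_{d_{0}\cdots d_{r-1}}\colon\partial\mP^{r-1}\to\Map_{\mX}(\Sigma^{p}y,x_{n-r+1})$ is equivalent to the existence of the extension $\bmT_{d_{0}\cdots d_{r-1}}$ over $\mP^{r-1}$. Now I invoke Lemma \ref{lemb.1}, applied with $r-1$ in place of its `$r$' (the case $r-1=2$ is handled by the explicit discussion in Example \ref{exam4.4}, producing the additional coherences $F_{0,j}$, while $r-1\geq 3$ is the Lemma as stated): the existence of $\bmT_{d_{0}\cdots d_{r-1}}$ guarantees that $\bmT_{\theta}$ exists for \emph{every} $\theta\colon\ppp{\bnp}\to\ppp{\bn-\br+\bo}$. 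This is precisely the hypothesis needed in \S \ref{con4.5} to define $\mT_{\theta}\colon\partial\mP^{r}\to\Map_{\mX}(\Sigma^{p}y,x_{n-r})$ for every $\theta\colon\ppp{\bnp}\to\ppp{\bn-\br}$; in particular $\mT_{d_{0}\cdots d_{r}}$ is defined, and by the loop--suspension adjunction it determines a well-defined element of $\pi_{r}\Map_{\mX}(\Sigma^{p}y,x_{n-r})$. One must check that this element lies in the appropriate subquotient and equals $\fd^{r}\lra{f}$ — but this is exactly the content of Theorem \ref{thm4.7}, whose proof the Corollary is meant to support, so here I only need the weaker bookkeeping that survival to $E^{r}$ has been achieved and that the displayed representative is the correct one, which follows by tracing through the exact-couple description of \S \ref{con4.2} after Reedy fibrant replacement (as in the proof of Lemma \ref{lemb.1}).

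The step I expect to be the main obstacle is verifying that the passage from ``$\bmT_{d_{0}\cdots d_{r-1}}$ exists'' to ``all $\bmT_{\theta}$ with target $\ppp{\bn-\br+\bo}$ exist'' is legitimately an application of Lemma \ref{lemb.1} at the right index, and in particular handling the off-diagonal $\theta$ uniformly: for $\theta=d_{i_{0}}\cdots d_{i_{r-1}}$ with $i_{0}\geq 1$ one has $\mT_{\theta}=0$ by the recipe of \S \ref{con4.5}, so $\bmT_{\theta}=0$ trivially extends, while the subtlety is concentrated entirely on the single diagonal face indexed by $(d_{0})(d_{0}\cdots d_{r-1})$, as in the proof of Lemma \ref{lemb.1}. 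I would make the Reedy fibrant reduction at the start (so that fibre sequences become strict and $G_{i}=0$ for $i\geq 1$ by Lemma \ref{lem4.1}), which collapses almost all of the permutahedral data and reduces the claim to the one-dimensional lifting in the long exact sequence of the fibration $Z_{n-r+1}(\xd)\to x_{n-r+1}\to M_{n-r+1}(\xd)$ already used in \S \ref{con4.2}. The remaining assertion — that $\mT_{d_{0}\cdots d_{r}}$ represents $\fd^{r}\lra{f}$ — is then inherited verbatim from that strict description, and the coherence-homotopy factors $\bmT_{\theta_{i}}$ of type (a) in Remark \ref{rmk4.6} contribute nothing new after the reduction.
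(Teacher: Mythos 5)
Your proposal is correct and follows essentially the same route as the paper's proof: reduce to the Reedy fibrant case, induct on $r$ with base case(s) from Example \ref{exam4.4}, use Lemma \ref{lemb.1} to produce all the extensions $\bmT_{\theta}$ so that $\mT_{d_{0}\cdots d_{r}}$ is defined and concentrated on the single facet $(d_{0})(d_{0}\cdots d_{r-1})$, and then identify $d_{0}\circ\bmT_{d_{0}\cdots d_{r-1}}$ with $\fd^{r}\lra{f}$ via the long-exact-sequence description of the differentials in \S \ref{con4.2}. Your explicit handling of the potential circularity with Theorem \ref{thm4.7} (resolving it through the strict description rather than the theorem) matches the paper's intent.
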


\begin{proof}
We again reduce to the Reedy fibrant case and proceed by induction. The cases $r = 2, 3$ follow from \S \ref{exam4.4} and \S \ref{rmk4.6}. In general, suppose that $\mT_{d_{0}\cdots d_{r-1}}$ is identified with $\fd^{r-1}\lra{f}$. Then by the general case in the proof of Lemma \ref{lemb.1}, $\mT_{d_{0} \cdots d_{r}}$ can be identified with $d_{0}\bmT_{d_{0}, \cdots d_{r-1}}$, and $\bmT_{d_{0}, \cdots d_{r-1}}$ can be identified with a choice of nullhomotopy of $\mT_{d_{0} \cdots d_{r-1}}$, so the result follows by the last paragraph of \S \ref{con4.2}.
\end{proof}

\begin{proof}[Proof of Theorem \ref{thm4.7}]
By induction on $r$, starting with the trivial case  $r = 2$:

Suppose that we have defined $\cDD\colon\DK(\pbDel_{n-r+1, n}\op\times\bbo) \rightarrow \sSet$ as in Theorem \ref{thm3.5}.
For each morphism $\theta\colon(\pbm, a) \rightarrow (\pbk, b)$ in $\pbDel_{n-r+1, n}\op\times\bbo$ (with $a,b\in\{0,1\}$), we write $C_{\theta}$ for the corresponding component of
$$
\DK(\pbDel_{n-r+1, n}\op \times\bbo) ( (\pbm, a), (\pbk, b))~,
$$
and $\bar{C}_{\theta}$ for the component of $\DK(\pbDel\op)(\pbm,\pbk)$ corresponding to $\theta\colon\pbm\rightarrow\pbk$. We define a map $\sk_{r-1}\DK(\pbDel_{n-r, n}\op \times\bbo)\rightarrow\sSet$ extending $\cDD$ by
$$
C_{\theta} \mapsto
\begin{cases}
        0  & \mbox{if } a = b = 0, m < n \\
        \xd(\bar{C}_{\theta})  & \mbox{if } a = b = 1 \\
        \cDD(C_{\theta}) & \mbox{if } k > r \\
        \mT_{\theta' \circ d_{0}} & \mbox{if } \theta =
      (\theta', 0 \rightarrow 1)\colon(\pbn, 0) \rightarrow(\ppp{\bn-\br}, 1).
\end{cases}
$$
Note that the simplicial $\bHom$-sets of $\DK(\pbDel_{n-r, n}\op \times\bbo)$ are $(r-1)$-skeletal, except for
$$
\DK(\pbDel_{n-r, n}\op \times\bbo)((\pbn,0), (\ppp{\bn-\br}, 1))~.
$$
This is a disjoint union of $r$-permutahedra corresponding to the maps $\theta\colon(\pbn, 0) \rightarrow (\ppp{\bn-\br}, 1)$. The result follows from Lemma \ref{lemb.1} and Corollaries \ref{corb.2} and \ref{cora.5}.
\end{proof}

\begin{corollary}\label{cor4.8}
The value of $\fd^{r}\lra{f}$ associated to each
$D\colon\DK(\pbDel\op_{n-r+1, n} \times\bbo) \rightarrow \mX$ as in Theorem \ref{thm4.7} is
determined inductively by expressing $\Sigma^{p}y\wedge\partial\mP^{r}$ as a colimit in $\mX$
of a certain diagram $\ccL$, and using its universal property on the inductively-defined
constituents of this diagram.
\end{corollary}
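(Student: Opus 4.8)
The plan is to read the statement off directly from the construction of $\mT_{d_{0}\cdots d_{r}}$ in \S\ref{con4.5}, together with the colimit presentation of the permutahedron extracted from the proof of Proposition \ref{lema.3}. First I would recall that, for $\theta=d_{0}\cdots d_{r}\colon\ppp{\bnp}\to\ppp{\bn-\br}$, the pointed space $\partial\mP^{r}$ is the boundary of the component $C_{\theta}$ of $\DK(\pbDel\op)(\ppp{\bnp},\ppp{\bn-\br})$, and that \eqref{DKmapping2} exhibits $\partial\mP^{r}$ as a coequalizer of a diagram of simplicial sets whose terms are the facets $\mP^{|Q_{1}|}\times\mP^{|Q_{2}|}$, indexed by the factorizations $\theta=\theta'\theta''$, with identifications along the products $\mP^{|P_{1}|}\times\mP^{|P_{2}|}\times\mP^{|P_{3}|}$ coming from triple factorizations.

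Next, the pointed tensoring $\Sigma^{p}y\wedge(-)$ is a left adjoint, with right adjoint $\Map_{\mX}(\Sigma^{p}y,-)$, so it preserves colimits; applying it to this coequalizer diagram produces a diagram $\ccL$ in $\mX$ with $\colim_{\mX}\ccL\cong\Sigma^{p}y\wedge\partial\mP^{r}$. By the tensor--hom adjunction a map $\Sigma^{p}y\wedge\partial\mP^{r}\to x_{n-r}$ is the same as a map of pointed spaces $\partial\mP^{r}\to\Map_{\mX}(\Sigma^{p}y,x_{n-r})$, and by the universal property of $\colim_{\mX}\ccL$ it is the same as a family of such maps on the facets that agree on the identification terms. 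This is precisely the data written down in \S\ref{con4.5}: on the facet $(\theta')(\theta'')$ the prescribed map is the composite of $\bmT_{\theta''}\times H_{\theta'}$ with the composition operation of $\mX$, and agreement on the overlapping lower faces is exactly the consistency built into the recursive definition of the $\mT_{\theta}$ (via associativity of composition in $\mX$ up to the chosen higher coherences $H_{\theta'}$). Hence the map induced out of $\colim_{\mX}\ccL$ by this family is $\mT_{d_{0}\cdots d_{r}}$, which by Theorem \ref{thm4.7} represents $\fd^{r}\lra{f}$ under the suspension--loop adjunction.

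The word ``inductively'' is accounted for by the observation that the constituents $\bmT_{\theta''}$ appearing as the maps out of the pieces of $\ccL$ are themselves produced by the same construction one permutahedral dimension lower, with $r$ replaced by $|Q_{2}|<r$, the target $x_{n-r}$ replaced by the intermediate object $x_{m}$, and $\partial\mP^{r}$ replaced by $\partial\mP^{|Q_{2}|}$; running this recursion all the way down, it terminates at the base cases $\bmT_{d_{0}}=f$ and $\bmT_{d_{i}}=0$ for $i\ge 1$, so the whole value of $\fd^{r}\lra{f}$ is determined by these universal-property assignments.

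Since the genuinely hard work is already carried out in Theorem \ref{thm4.7}, the only thing one must be careful about here is purely homotopical bookkeeping: that the strict coequalizer \eqref{DKmapping2} presenting $\partial\mP^{r}$ is a homotopy colimit and is carried to one by $\Sigma^{p}y\wedge(-)$, and that $\colim_{\mX}\ccL$ is a homotopical colimit in $\mX$ in the sense used in \S\ref{con2.6}, so that the induced map is well defined up to homotopy. For the first point I would reduce, as at the start of the proof of Lemma \ref{lemb.1}, to a Reedy fibrant strict model, so that the facet inclusions are cofibrations and the hypotheses of \S\ref{con2.6} apply; for the second, $\mX$ has the requisite homotopical colimits by assumption and $\ccL$ is built from cofibrations. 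Granting this, the induced map depends --- up to homotopy, and modulo the indeterminacy inherent in treating $\fd^{r}$ as a relation in the sense of \S\ref{cint} --- only on $D$, and not on the intermediate choices of the extensions $\bmT_{\theta}$, which is exactly what is already recorded in Theorem \ref{thm4.7}.
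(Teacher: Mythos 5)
Your proposal is correct and follows essentially the same route as the paper: both express $\partial\mP^{r}$ as a (homotopy) colimit of its permutahedral facets via \eqref{DKmapping2}, transport this through the pointed tensoring to present $\Sigma^{p}y\wedge\partial\mP^{r}$ as the colimit of a diagram $\ccL$ in $\mX$, and then read off $\mT_{d_{0}\cdots d_{r}}$ from the universal property applied to the inductively defined facet maps $\bmT_{\theta''}$ composed with the coherences $H_{\theta'}$. The only cosmetic difference is that the paper routes the comparison through the simplex category and the face poset $\cF_{\mP^{r}}$ (citing the dual of \cite[Ch.\ XI, 4.3]{BK}) rather than invoking preservation of colimits by the left adjoint $\Sigma^{p}y\wedge(-)$ on the coequalizer directly.
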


\begin{proof}
By construction, every simplicial set $K$ is the (homotopy) colimit of the tautological
diagram $\cG:\Delta_{/K}\op\to\sSet$ indexed by its simplex category (see \S \ref{def2.4}),
so a map $f:K\to L$ is determined up to homotopy by the compatible collection of maps
$f\sb{\sigma}:\sigma\to L$ for $\sigma\in\Delta_{/K}\op$.
However, \eqref{DKmapping2} displays $\partial\mP^{r}$ inductively as a (homotopy) colimit of
a simpler diagram $\ccH:\cF\sb{\mP^{r}}\to\sSet$, where $F\sb{\mP^{r}}$ is now the category of
polyhedral faces of $\mP^{r}$, and by the dual of \cite[XI, 4.3]{BK}, the
homotopy colimit of $\cG\colon\Delta\op_{/\partial\mP^{r}}\to\sSet$ can be obtained by first taking the homotopy colimit over the simplex category of each (product of) lower dimensional permutahedra in $\cF\sb{\mP^{r}}$, and then taking the homotopy colimit over $\cF\sb{\mP^{r}}$.
Conversely, for each $K\in\cF\sb{\mP^{r}}$, any map $f\sb{K}:\ccH(P)\to L\in\sSet$ determines
maps $f\sb{\sigma}:\sigma\to L$ for each $\sigma\in\Delta_{/K}\op$, and thus we can obtain
the induced map $f:\partial \mP^{r}\to L$ from a compatible collection $(f\sb{K})\sb{K\in\cF\sb{\mP^{r}}}$, by the universal property of the (homotopy) colimit over
$\cF\sb{\mP^{r}}$.

In particular, this holds for the map $f=\mT_{\theta}\colon\partial\mP^{r} \rightarrow \Map_{\mathscr{X}}(\Sigma^{p}y, x_{n-r})$, which is induced inductively from the fill-ins $\bmT_{\theta'}$,
as in Lemma \ref{lemb.1}, together with the nullhomotopies $G_{k}$ of \S \ref{exam4.3}
and the coherence homotopies $H\sb{I}$ for the simplicial object $\xd$ (which are naturally described as maps from suitable permutahedra to $\Map_{\mX}(x\sb{n}, x\sb{n-i})$, by Remark \ref{rmk1.7} and Proposition \ref{lema.3}).

By \S \ref{con4.5}, the map $\mT_{\theta}$ is pointed (where the basepoint of
$\partial\mP^{r}$ is the vertex labelled $d_{i_{0}}\cdots d_{i_{r}}$ with ascending indices),
so it is adjoint to a map $T_{\theta}\colon\Sigma^{p}y\wedge\partial\mP^{r}\rightarrow x_{n-r}$.
Here for any $z\in \Ob( \mX)$ and simplicial set $K$ with basepoint $k$ we define $z\wedge K$ to be
the (homotopy) cofiber of $z\otimes\{k\}\hra z\otimes K$.
Note that the simplicial tensoring with $K$ is defined by taking a (homotopy) colimit in $\mX$
of the constant $z$-diagram indexed by $\Delta_{/K}\op$, and by the discussion above
when $K=\partial\mP^{r}$ we may replace $\Delta_{/K}\op$ by $\cF\sb{\mP^{r}}$ (because
products of permutahedra, like simplices, are contractible).

This allows us to write $\Sigma^{p}y\wedge\partial\mP^{r}$ as a colimit in $\mX$ (see \S \ref{def2.3} and Remark \ref{rmk2.8}) of a diagram $\ccL\colon\cF\sb{\mP^{r}}\to \mX$,
with the values at each object being either $\Sigma^{p}y$ or $0$.
The class $[T_{\theta}]\in\pi_{0}\Map_{\mX}(\Sigma^{p+r-1}y, x_{n-r})$ is then determined
inductively by the given diagram $\ccL$ using this description of its domain as a
colimit in $\mathscr{X}$.
\end{proof}

\begin{remark}\label{rmk4.9}
  In any reasonable models of $\infty$-category theory, the (co)limits in an $\infty$-category $X$ can be identified with the homotopical (co)limits in the corresponding strict model $\mX$ (see \S \ref{rmk1.9}). We can also identify the
homotopy spectral sequences for simplicial objects in $X$ and
in $\mX$. In Section \ref{cqcic}, we will see how both these identifications work precisely
in a quasi-category.

Thus, Corollary \ref{cor4.8} provides a model-independent description of the spectral
sequence, for any of the standard models of $\infty$-category theory
(see \cite{BERGNER3,Bergner-Arising}).
\end{remark}

\begin{remark}\label{rperm}
Permutahedra have appeared in a number of different contexts in homotopy theory,
starting with \cite{MilgI} and \cite{BauG}, and in particular in connection with 
higher homotopy operations (see \cite{BlaA,BJTurHA,BMarkH}) where they were used 
as an \emph{ad hoc} formalism for keeping track of the homotopies needed to define such
operations. However, the connection with the Dwyer-Kan resolution of $\pbDel$, which may
explain their central role in various settings, was not noticed previously
(by the first author, at least).
\end{remark}

\begin{mysubsection}{Differentials in stable $\infty$-categories}
\label{sdsic}
At first glance one might think that little of the above description is needed in a stable
$\infty$-category $X$, where spectral sequences are constructed directly from a tower of
(co)fibrations
$$
\dotsc \to x\sb{n}~\xrightarrow{f\sb{n}}~x\sb{n-1}~\xrightarrow{f\sb{n-1}}~\dotsc ~\xrightarrow{f\sb{2}}~x\sb{1}~\xrightarrow{f\sb{1}}~x\sb{0}~.
$$
However, in fact in this case too a restricted simplicial diagram is needed to keep track of
all the coherences.   This is illustrated in the case $n=3$ by the cubical diagram:
\myudiag[\label{tgencube}]{
  x\sb{3} \ar[rr] \ar[dd] \ar[rd]^(0.65){f\sb{3}} && 0 \ar@{->}|(.5){\hole}[dd] \ar[rd] &\\
  &  x\sb{2} \ar[rr] \ar[dd]_(0.3){f\sb{2}} && 0 \ar[dd] \\
  0 \ar@{->}|(.5){\hole}[rr] \ar[rd] && 0 \ar[rd] &\\
  & x\sb{1} \ar[rr]^{f\sb{1}} && x_{0}
}
(which is just \eqref{diagnkeq} with all but the first face map zero in
\emph{all} dimensions). See \cite{BBSenT} for further details.
\end{mysubsection}

%
%
\sect{Quasi-Categories}
\label{cqcic}

We now turn to another model of $\infty$-category theory, originally due to
Boardman-Vogt in \cite{BVogHI}, and later studied by Joyal and Lurie (see
\cite{Joyal1,Joyal-quasi-cat,Lurie}):

\begin{defn}\label{def5.1}
A \emph{quasi-category} is a simplicial set $X$ in which we can find all fillers
$$
\xymatrix
{
\Lambda_{i}^{n} \ar[r] \ar[d] & X \\
\Delta^{n} \ar@{.>}[ur] &
 }
$$
for $0< i < n$.
\end{defn}

The category of simplicial sets admits a \emph{Joyal} model category structure, in which the cofibrations are monomorphisms, the fibrant objects are quasi-categories, and
the weak equivalences are \emph{Joyal equivalences} (see \cite[\S 2.2]{Lurie} or \cite{Joyal-quasi-cat}). Moreover:

\begin{theorem}[\protect{\cite[Theorem 2.2.5.1]{Lurie}}]\label{thm5.2}
There is a Quillen equivalence:
$$
\fC\colon\sSet\leftrightarrows\sCat\colon\fB
$$
between the Bergner model structure of \S \ref{exam1.5} and the Joyal model structure. The right adjoint is known as the \emph{homotopy coherent nerve}.
\end{theorem}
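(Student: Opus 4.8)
The statement is due to Lurie; below I sketch the plan of proof. Recall first that $\fC$ is the colimit-preserving functor determined by its values on simplices: $\fC[\Delta^{n}]$ is the simplicial category with object set $\{0,\dots,n\}$, with $\fC[\Delta^{n}](i,j)$ the nerve of the poset of subsets $T\subseteq\{i,i+1,\dots,j\}$ containing $i$ and $j$ (and empty if $j<i$), composition being union of subsets; its right adjoint is the homotopy coherent nerve $\fB$, with $\fB(\mX)\sb{n}=\Hom\sb{\sCat}(\fC[\Delta^{n}],\mX)$. The plan is to establish, in order, that $\fB$ is right Quillen and then that the adjunction is a Quillen equivalence.

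For the first part, since $\fC$ preserves colimits it suffices to analyze it on generators. A combinatorial computation from the poset formula shows that $\fC$ carries each boundary inclusion $\partial\Delta^{n}\hookrightarrow\Delta^{n}$ — the generating cofibrations of the Joyal structure — to a cofibration of simplicial categories (bijective on objects, with a relatively free cell structure on $\bHom$-sets); equivalently, $\fB$ preserves trivial fibrations. Next one checks that $\fC$ carries the inner horn inclusions $\Lambda^{n}\sb{i}\hookrightarrow\Delta^{n}$ ($0<i<n$) to Dwyer-Kan equivalences: the poset formula shows $\fC[\Lambda^{n}\sb{i}]\to\fC[\Delta^{n}]$ is an isomorphism on every $\bHom$-set except $\bHom(0,n)$, on which it is the inclusion of a contractible subcomplex of the (contractible) $(n{-}1)$-cube $\fC[\Delta^{n}](0,n)$, hence a trivial cofibration of simplicial sets. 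With a little more care — treating lifts of equivalences along with inner horns — this yields that $\fB$ sends Bergner fibrations to Joyal fibrations, so $(\fC,\fB)$ is a Quillen adjunction.

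To upgrade this to a Quillen equivalence I would use the criterion that $\fB$ reflects weak equivalences between fibrant objects — which holds because $\cP(\fB\mX)$ and the mapping spaces of $\fB\mX$ recover $\cP(\mX)$ and the $\Map\sb{\mX}(a,b)$ up to equivalence, so a Joyal equivalence $\fB f$ forces $f$ to be a Dwyer-Kan equivalence — together with the requirement that the derived unit $S\to\fB(R\fC[S])$ be a Joyal equivalence for every simplicial set $S$, where $R$ is a fibrant replacement functor on $\sCat$. Both sides of this natural transformation take the skeletal filtration of $S$ to homotopy colimits along cofibrations and commute with filtered colimits, so by induction on skeleta and attached cells one reduces to $S=\Delta^{n}$. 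The remaining point is that $\Delta^{n}\to\fB(R\fC[\Delta^{n}])$ is a Joyal equivalence; one argues by induction on $n$, using that $\fC[\Delta^{n}]$ is obtained from $\fC[\Delta^{n-1}]$ by a pushout which $\fC$ detects and that $\fC[\Delta^{n}](0,n)$ is an $(n{-}1)$-cube (in particular contractible), to identify the objects, mapping spaces, and composites of $\fB(R\fC[\Delta^{n}])$ with those of $\Delta^{n}$ up to coherent homotopy.

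I expect this last step — showing that the rigidification of a simplex has the homotopy type predicted by its homotopy coherent nerve — to be the main obstacle: all of the polytopal combinatorics of the $\bHom$-sets of $\fC$ is concentrated there. Once that input is available, everything else is formal manipulation of Quillen adjunctions and their standard recognition criteria.
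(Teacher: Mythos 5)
First, note that the paper offers no proof of this statement: it is quoted directly from Lurie (\cite[Theorem 2.2.5.1]{Lurie}), so your attempt can only be measured against the standard arguments in the literature (Lurie's, via straightening over a point, or Dugger--Spivak's, via necklaces). Your description of $\fC$ and $\fB$, and your argument that $(\fC,\fB)$ is a Quillen adjunction --- $\fC$ sends boundary inclusions to cofibrations and inner horns to trivial cofibrations concentrated in $\bHom(0,n)$, with the extra care you mention because inner horns alone do not detect Joyal fibrations --- are correct and are essentially how every treatment begins.

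The gap is in the passage from adjunction to equivalence. You assert that both sides of the derived unit take the skeletal filtration of $S$ to homotopy colimits along cofibrations, and use this to reduce to $S=\Delta^{n}$. For the left-hand side this is trivial, but for $S\mapsto\fB(R\,\fC[S])$ it is not: $\fB$ is a \emph{right} Quillen functor, so there is no formal reason for $\fB\circ R$ to carry the homotopy pushouts $\fC[S']\cup_{\fC[\partial\Delta^{n}]}\fC[\Delta^{n}]$ in $\sCat$ to homotopy pushouts in the Joyal model structure; that it does so is a consequence of the theorem you are trying to prove, so the reduction as stated is circular. This step is where the entire difficulty of the theorem lives: it amounts to controlling the $\bHom$-sets of $\fC[S]$ for a general simplicial set $S$, i.e.\ understanding mapping spaces in (badly behaved) pushouts of simplicial categories --- which Dugger and Spivak handle by an explicit ``necklace'' calculus, and which Lurie sidesteps entirely by instead proving that the \emph{counit} $\fC[\fB\mC]\to\mC$ is a Dwyer--Kan equivalence for fibrant $\mC$, using the straightening construction. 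By contrast, the case $S=\Delta^{n}$, which you flag as the main obstacle, is comparatively easy: $\fC[\Delta^{n}]$ is given explicitly with contractible cubical $\bHom$-sets, so identifying $\fB(R\,\fC[\Delta^{n}])$ with $\Delta^{n}$ is a direct check. You have the right architecture, but the hard combinatorial input sits in the gluing step, not in the simplex step.
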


\begin{defn}\label{def5.4}
Given a quasi-category $X$ and $x, y \in X_{0}$, we denote the pullback of
$\ast\xrightarrow{x}  X \leftarrow X_{/y}  $ by $\Map_{X}(x, y)$.

The \emph{join functor} is the unique cocontinuous functor $(-) * (-)\colon\sSet \times \sSet \rightarrow \sSet$, such that $\Delta^{n} * \Delta^{m}
\cong \Delta^{n+m+1}$ (the usual join).
\end{defn}

\begin{defn}\label{def5.5}
  We write $B\colon\Cat\rightarrow \sSet$ for the usual nerve functor. This functor has a left adjoint $\cP\colon\sSet\rightarrow\Cat$, where if $X$ is a quasi-category, its \emph{homotopy category} $\cP(X)$ has as objects the vertices of $X$, with
  $\cP(x, y) = \pi_{0}\Map_{X}(x, y)$. The equivalence class of $f\in\Map_{X}(x, y)_{0}$ is denoted by $[f]$.

Given $x \xrightarrow{f} y \xrightarrow{g} z$, we can find a lift
$$
\xymatrix
{
\Lambda_{1}^{2} \ar[d] \ar[rr]_{(g, \,, f)} && X \\
\Delta^{2} \ar@{.>}[urr]_{s} &&
}
$$
and put $[g] \circ [f] = [d_{1}s]$. The composition operation in $\cP(X)$ is independent of the choice of lift (see \cite[1.2.4]{Lurie}).

A $2$-simplex $\sigma$ of a quasi-category $X$ with boundary
$$
\xymatrix
{
&  1 \ar[dr]^{g} & \\
0 \ar[ur]^{f} \ar[rr]_{h} & & 2
}
$$
is said to \emph{witness} the composition $g \circ f = h$.
More generally, an $n$-simplex $\sigma$ in $X$ \emph{witnesses} the composition of the $1$-simplices
$$
0 \xrightarrow{e_{0}} 1 \xrightarrow{e_{1}} 2 \cdots \xrightarrow{e_{n-1}} n.
$$
\end{defn}

\begin{mysubsection}{Quasi-categories as $(\infty, 1)$-categories}
\label{sqcic}
We can view quasi-categories as a model for $(\infty, 1)$-category theory by taking the vertices of the quasi-category $X$ for $\Ob(X)$, and the space of maps between two objects to be the Kan complex of \S \ref{def5.4}. The nerve functor is the usual nerve functor and the homotopy category functor is as in \S \ref{def5.5}.

There is a natural zig-zag of weak equivalences between $\Map_{X}(x, y)$ and
$\fC(X)(x, y)$ (see \cite[\S 2.2.2]{Lurie}). Thus, we can take our composition operation $c_{x, y, z}$ to be any lift
$$
\xymatrix
{
\Map_{X}(x, y) \times \Map_{X}(y, z) \ar[d] \ar@{.>}[rr] &&  \Map_{X}(x, z) \ar[d] \\
\fC(X)(x, y) \times \fC(X)(y, z) \ar[rr]_{c}  && \fC(X)(x, z)
}
$$
in the homotopy category of spaces, with the bottom map being composition in $\fC(X)$. Associativity up to homotopy is immediate, since composition in $\fC(X)$ is strictly associative.

The Joyal equivalences of quasi-categories are precisely equivalences of $\infty$-categories in the sense of \S \ref{def1.3} (\cite[Theorem 2.2.0.1]{Lurie}).
Therefore, we can think of $\fC(X)$ as a strict model for the
$\infty$-category $X$. In fact, this is precisely the model for
the $\infty$-category $X$ given by \S \ref{rmk1.9}, up to weak
equivalence.

Consider the Quillen equivalence $q_{M}\colon\sSet
\leftrightarrows \stSet\colon r_{M} $ between the Joyal and complete
Segal model structure, with the right adjoint $r_{M}$ given by
$S \mapsto S_{0,\ast}$.
By \cite{BERGNER3}, the homotopy inverse to the functor $E$ assigning to any quasi-category its strict model (given in \S \ref{rmk1.9}) is $\bR r_{M} \circ \bL I \circ \bR B$, where $B$ is the nerve functor applied levelwise to a simplicial category, and $I$ is the inclusion of Segal precategories in bisimplicial sets. As noted in the final paragraph of \cite[Section 3]{Bergner-Arising} $\bL I \circ \bR B$ is weakly equivalent to $\bL q_{M} \circ \bR\fB$, so that $\bR r_{M} \circ \bL I \circ \bR B$ is weakly equivalent to $\bR\fB$.
Thus on the level of homotopy categories the functor $E$ is isomorphic to $\bL\fC$.
\end{mysubsection}

%
%
\sect{Differentials of spectral sequences in a quasi-category}
\label{cdssqc}

We now show that in a quasi-category $X$, the spectral sequence of \S \ref{con2.6} can be identified with the spectral sequence in an associated simplicial category
(provided $X$ has enough (co)limits). We also give an explicit, intrinsically quasi-categorical, analogue of Theorem \ref{thm4.7} for the first few differentials.

\begin{lemma}\label{lem6.1}
If $X$ is a quasi-category with countable (co)limits, then simplicial objects in $X$ have
homotopy spectral sequences. Conversely, if $\mX$ is a fibrant simplicial category with countable (co)limits, and $y$ and $\xd$ are respectively a cogroup object and a simplicial object in $\mX$, then the spectral sequence of \S \ref{con3.3} in $\mX$ is isomorphic to that associated to $\fB(\xd)$ in $\fB(\mX)$.
\end{lemma}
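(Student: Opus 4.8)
The plan is to prove the two assertions in turn: the first by verifying that a quasi-category with countable (co)limits satisfies the hypotheses of \S \ref{con2.6}, and the second by exhibiting a zig-zag of weak equivalences between the two towers of fibre sequences from which the spectral sequences are built.

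\textbf{First assertion.} It suffices to check that a quasi-category $X$ with countable (co)limits meets assumptions (1) and (2) of \S \ref{con2.6}; the recipe there then produces the homotopy spectral sequence of any simplicial object $\xd$ in $X$ (together with a cogroup object). Assumption (2) --- that $\Map_X(y,-)$ carries homotopy pullbacks to homotopy pullbacks of Kan complexes --- follows from the characterization of limits recalled in \S \ref{def2.3}: a square in $X$ is a homotopy pullback exactly when $\Map_X(z,-)$ sends it to a homotopy pullback of Kan complexes for every $z\in\Ob(X)$, in particular for $z=y$ (cf.\ \cite[\S 4.4]{Lurie}). For assumption (1), recall from Definition \ref{def2.4} that $\xd[K]$ is the limit of $\cB(\Delta\op_{/K})\xrightarrow{p}\cB(\Delta\op)\xrightarrow{\xd}X$, so $K\mapsto\xd[K]$ is \emph{contravariant}, and we must show a homotopy pushout $K=A\cup_C B$ of simplicial sets is sent to the homotopy pullback $\xd[A]\times_{\xd[C]}\xd[B]$. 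This is a consequence of two facts: the simplex-category functor $K\mapsto\cB(\Delta\op_{/K})$ sends the homotopy pushout to a homotopy colimit of $\cB(\Delta\op_{/A})\leftarrow\cB(\Delta\op_{/C})\to\cB(\Delta\op_{/B})$ (as $\cB(\Delta\op_{/K})\simeq K$), and the limit functor turns a homotopy colimit of indexing quasi-categories into a homotopy limit in $X$ (cf.\ \cite[\S 4.2]{Lurie}). Applied to the homotopy cofibre sequence $\Delta^{n-1}/\partial\Delta^{n-1}\to\Delta^n/\Lambda^n_0\to\Delta^n/\partial\Delta^n$ this produces precisely the fibre sequence \eqref{eqhfibseq}, and the construction of \S \ref{con2.6} goes through.

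\textbf{Second assertion.} Since $\mX$ is fibrant and $\fC\dashv\fB$ is a Quillen equivalence (Theorem \ref{thm5.2}), the quasi-category $\fB(\mX)$ is a model of the same $\infty$-category as $\mX$: by \S \ref{sqcic} there is a natural zig-zag of weak equivalences $\Map_{\fB(\mX)}(a,b)\simeq\fC(\fB(\mX))(a,b)\simeq\mX(a,b)=\Map_{\mX}(a,b)$, compatible with composition up to homotopy, under which the $\infty$-categorical (co)limits of \S \ref{def2.3} in $\fB(\mX)$ correspond to the homotopical (co)limits of \S \ref{def2.7} in $\mX$ (see Remark \ref{rmk2.8} and, e.g., \cite[\S 4.2]{Lurie}); in particular $\fB(\mX)$ has countable (co)limits, so by the first part $\fB(\xd)$ --- by which we mean $\fB$ applied to a strict replacement $x'_\bullet$ of $\xd$, regarded via the nerve as a simplicial object $\cB(\Delta\op)\to\fB(\mX)$ --- has a homotopy spectral sequence. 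Because $\Map_{\fB(\mX)}(y,-)$ preserves limits and agrees up to natural weak equivalence with $\Map_{\mX}(y,-)$, and the tensoring $\xd[K]$ is by definition such a limit, for each $n$ we obtain a natural weak equivalence
$$
\Map_{\fB(\mX)}(y,Z_n(\fB(\xd)))~\simeq~Z_n\bigl(\Map_{\mX}(y,x'_\bullet)\bigr),
$$
and similarly for $C_n$, compatible with the structure maps $i$ and $\bar{\partial}$. Thus the fibre sequences \eqref{eqhfibseq} that define the spectral sequence of $\fB(\xd)$ in $\fB(\mX)$ are connected, through a zig-zag of weak equivalences of towers, to the fibre sequences \eqref{eqfibseq} of the simplicial space $\pbn\mapsto\Map_{\mX}(y,x'_n)$ that defines the spectral sequence of \S \ref{con3.3}. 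Passing to homotopy groups yields an isomorphism of the two exact couples, hence of all their derived couples, i.e.\ an isomorphism of spectral sequences.

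\textbf{Main obstacle.} The delicate point is the coherence bookkeeping behind these equivalences: one must choose the maps involved --- the counit $\fC\fB(\mX)\to\mX$, the comparison $\Map_{\fB(\mX)}(-,-)\simeq\fC(\fB(\mX))(-,-)$, and the strictification $x'_\bullet\xrightarrow{\sim}\xd$ --- compatibly enough that the induced maps of Moore cycle and chain objects commute (up to the requisite homotopies) with $i$, $\bar{\partial}$ and the connecting map $\delta$, so that one genuinely produces a morphism of exact couples rather than merely a levelwise equivalence of towers. The cleanest way to organize this is to note that each of these constructions is a functor of the pair $(\mX,x'_\bullet)$ valued in towers of fibre sequences of Kan complexes, and that a natural weak equivalence between such functors automatically induces an isomorphism on the associated spectral sequences from the $E^2$-page onward (indeed on the exact couples themselves, after applying $\pi_*$).
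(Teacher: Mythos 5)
Your proof is correct in outline, but the first half takes a genuinely different route from the paper's. You verify hypotheses (1) and (2) of \S \ref{con2.6} intrinsically in the quasi-category $X$, using the facts that the corepresentable functor $\Map_{X}(y,-)$ carries limits to homotopy limits of Kan complexes and that the weighted limit $K\mapsto \xd[K]$ turns homotopy pushouts of simplicial sets into pullbacks (via the decomposition of limits over the simplex category). The paper instead transports everything to a fibrant simplicial category: it uses the derived counit of $\fC\dashv\fB$ to replace $X$ by $\fB(\mX)$, Riehl's rectification result to replace $\xd$ by $\fB(\zd)$ for a coherent simplicial object $\zd$ in $\mX$, and Lurie's comparison of (co)limits (Theorem 4.2.4.1 and Corollary 4.2.4.7) to reduce both hypotheses to the already-known simplicial-category statements. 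Your route is more model-internal and avoids rectification, but at the cost of invoking nontrivial quasi-category theory that you should cite more carefully: the fact that mapping spaces detect limits is a theorem about quasi-categories, not something ``recalled in'' Definition \ref{def2.3}, which only defines limits as terminal objects of slices. The paper's route is more economical in that the same reduction simultaneously produces the levelwise weak equivalence of the defining fibre sequences needed for the second assertion, which you then re-derive by hand; your treatment of that second assertion --- a zig-zag of weak equivalences of the towers inducing an isomorphism of exact couples --- otherwise coincides with the paper's.
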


\begin{proof}
For the first statement, it suffices to show that hypotheses \S \ref{con2.6}(1),(2) hold in $X$: the derived counit of the Quillen equivalence of Theorem \ref{thm5.2} yields a Joyal equivalence $X \rightarrow \fB(\mX)$ for some fibrant simplicial category $\mX$. By \cite[Prop.~4.1.1.9]{Lurie}, we can identify colimits in $X$ with those in $\fB(\mX)$, so it suffices to check the conditions in $\fB(\mX)$. By \cite[Lemma 6.5]{Riehl1}, simplicial objects in $\fB(\mX)$ are the same as homotopy coherent simplicial objects in $\mX$, so we may replace $\xd$ with $\fB(\zd)$ for some simplicial object $\zd$ in $\mX$ (see also \cite[Cor.~4.2.4.7]{Lurie}).

The tensoring $\xd[K]$ can be identified with the tensoring $\zd[K]$ using \cite[Theorem 4.2.4.1]{Lurie} (which identifies colimits in $\mX$ with colimits in $\fB(\mX)$). Thus, condition (1) follows from the corresponding statement for simplicial categories.

For condition (2), by \cite[Cor.~4.2.4.7]{Lurie}, we may replace any fibre sequence in $\fB(\mX)$ with a fibre sequence $a \rightarrow b \rightarrow c$ in $\mX$.  By \cite[2.2.2]{Lurie} there is then a natural zig-zag of weak equivalences connecting $\Map_{\fB(\mX)}(x, y)$ to $\Map_{\mX}(x, y)$. Thus,
 $$
 \Map_{\fB(\mX)}(y, a) \rightarrow \Map_{\fB(\mX)}(y, b) \rightarrow \Map_{\fB(\mX)}(y, c)
 $$
is a fibre sequence, since it is identified (up to homotopy equivalence) with
 $$
 \Map_{\mX}(y, a) \rightarrow \Map_{\mX}(y, b) \rightarrow \Map_{\mX}(y, c),
 $$
 yielding condition (2).

The above discussion also yields a vertical weak equivalence
$$
\xymatrix{
\Map_{\fB(\mX)}(\Sigma^{p}y, Z_{n}(\xd )) \ar[r] \ar[d] & \Map_{\fB(\mX)}(\Sigma^{p}y, C_{n}(\xd )) \ar[r] \ar[d] & \Map_{\fB(\mX)}(\Sigma^{p}y, Z_{n-1}(\xd )) \ar[d] \\
  \Map_{\mX}(\Sigma^{p}y, Z_{n}(\zd )) \ar[r] & \Map_{\mX}(\Sigma^{p}y, C_{n}(\zd )) \ar[r] & \Map_{\mX}(\Sigma^{p}y, Z_{n-1}(\zd ))
      }
$$
of the fibre sequences which define the spectral sequences of \S \ref{con2.6} and \S \ref{con3.3}, so the final statement of the Lemma follows.
\end{proof}

\begin{defn}\label{dlift}
We say that $f\colon\cB(C) \rightarrow X$ \emph{lifts} a diagram $D\colon C \rightarrow \cP(X)$ if $\cP(f)$ is isomorphic to $D$.
\end{defn}

\begin{lemma}\label{lem6.2}
Given a diagram $D\colon C \rightarrow \cP(X)$ and a Joyal equivalence of quasi-categories
$g\colon X \rightarrow Y$, there is a lift $h$ for  $\cP(g) \circ D$ if and only if $D$ has a lift $f$.
\end{lemma}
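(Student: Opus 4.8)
The plan is to prove both implications by transporting lifts along $g$ and along a homotopy inverse of $g$, the only non-formal ingredient being that the homotopy category functor $\cP$ is invariant under Joyal homotopy.

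For the first implication --- if $D$ has a lift $f\colon\cB(C)\to X$, then so does $\cP(g)\circ D$ --- I would simply take $h:=g\circ f$: since $\cP(f)\cong D$, functoriality of $\cP$ gives $\cP(g\circ f)=\cP(g)\circ\cP(f)\cong\cP(g)\circ D$. No hypothesis on $g$ is used here.

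For the converse I would first invoke the facts about the Joyal model structure recalled after Definition \ref{def5.1}: every simplicial set is cofibrant and the quasi-categories are precisely the fibrant objects, so a Joyal equivalence $g\colon X\to Y$ between quasi-categories is a homotopy equivalence, with homotopy inverse $g'\colon Y\to X$ and Joyal homotopies $g'\circ g\simeq\Id_{X}$, $g\circ g'\simeq\Id_{Y}$ (realized, say, by $J$-homotopies, where $J$ is the nerve of the free-standing isomorphism). Given a lift $h\colon\cB(C)\to Y$ of $\cP(g)\circ D$, I would set $f:=g'\circ h$ and compute
\[
\cP(f)=\cP(g')\circ\cP(h)\cong\cP(g')\circ\cP(g)\circ D=\cP(g'\circ g)\circ D\cong D,
\]
where the last step uses that $\cP$ carries a Joyal homotopy to a natural isomorphism: $\cP$ preserves finite products and $\cP(J)$ is the free-standing isomorphism, so applying $\cP$ to a homotopy $X\times J\to X$ yields a natural isomorphism $\cP(g'\circ g)\cong\Id_{\cP(X)}$ (equivalently, $\cP(g)$ and $\cP(g')$ become mutually quasi-inverse equivalences of homotopy categories, compatibly with \S\ref{def1.3} and \S\ref{sqcic}). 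This exhibits $f$ as a lift of $D$, completing the proof.

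I do not expect a serious obstacle here: the argument is formal once one grants the two recalled facts --- the existence of homotopy inverses for equivalences of quasi-categories, and the homotopy-invariance of $\cP$. The one point to watch is that Definition \ref{dlift} requires $\cP(f)$ to be merely \emph{isomorphic}, not equal, to $D$, which is exactly the structure the composites above preserve, so no rigidity issue arises.
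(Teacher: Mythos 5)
Your proof is correct, but it takes a genuinely different route from the paper's. The paper never constructs a homotopy inverse: it encodes lifts as vertices of the diagram quasi-category $X^{\cB(C)}$, observes that $D$ admits a lift precisely when its class lies in the image of $\pi_{0}J(X^{\cB(C)})\rightarrow\pi_{0}\cB(\Iso(\cP(X)^{C}))$ (where $J$ is the maximal-Kan-subcomplex functor), and then compares the squares for $X$ and $Y$, using that $(-)^{\cB(C)}$ preserves Joyal equivalences of quasi-categories and that $J$ turns these into weak homotopy equivalences, so the vertical maps are bijections on $\pi_{0}$. You instead argue pointwise: the forward direction by whiskering with $\cP(g)$, and the converse by producing a homotopy inverse $g'$ (legitimate, since all objects are cofibrant and quasi-categories are fibrant in the Joyal model structure) and checking that $\cP$ sends $J$-homotopies to natural isomorphisms because $\cP$ preserves finite products and $\cP(J)$ is the free-standing isomorphism --- a fact available from \cite{Joyal1}. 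Both arguments ultimately rest on the same model-categorical input (cofibrancy of $\cB(C)$, fibrancy of $X$ and $Y$); yours is more elementary and yields an explicit lift $f=g'\circ h$, while the paper's packages both implications at once and in fact gives a bijection between isomorphism classes of lifts over $X$ and over $Y$, which is slightly more information than bare existence. The one point you rightly flag --- that left homotopy can be realized by the cylinder $X\times J$ --- does require $X$ fibrant and cofibrant, which holds here, so there is no gap.
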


\begin{proof}
If $f\colon\cB(C) \rightarrow X$ is a vertex of $X^{\cB(C)}$, then $\cP(f)$ can be identified with the image of $f$ under the map
 $$
 X^{\cB(C)} \rightarrow\cB\cP(X)^{\cB(C)}
 $$
 induced by the counit of the adjunction $\cP \dashv\cB$. There is a functor
 $J:\sSet\to\Kan$ which assigns to each simplicial set its maximal Kan subcomplex (see \cite[Corollary 1.5]{Joyal1}), and $D$ has a lift if and only if the component of $D$ is contained in the image of
 $$
 \pi_{0}J(X^{\cB(C)}) \rightarrow \pi_{0}J(\cB\cP(X)^{\cB(C)} ) \cong \pi_{0}J\cB((\cP(X)^{C})) \cong \pi_{0}\cB(\Iso(\cP(X)^{C})),
 $$
where $\Iso(\cK)$ is the subcategory of natural isomorphisms in a category $\cK$.
  We have a commutative diagram
 $$
 \xymatrix
 {
  \pi_{0}J(X^{\cB(C)}) \ar[r] \ar[d] &  \pi_{0}J(\cB\cP(X)^{\cB(C)} ) \ar[d] \\
  \pi_{0}J(Y^{\cB(C)}) \ar[r] &  \pi_{0}J(\cB\cP(Y)^{\cB(C)} )
 }
 $$
 The functor $(-)^{\cB(C)}$ preserves Joyal equivalences of quasi-categories, and $J$ takes Joyal equivalences to weak equivalences (see \cite[Lemma 4.5]{Joyal-quasi-cat}).
 The vertical arrows are thus bijections, and the required result follows.
\end{proof}

\begin{prop}\label{thm6.3}
Let $\xd$ be a simplicial object in a quasi-category $X$, and $y$ a cogroup object of $X$. Then $\lra{f}\in \pi_{0}\Map_{X}(\Sigma^{p}y,x\sb{n})$ survives to the $E^{r}$-page of the spectral sequence of \S \ref{con2.6} if and only if we can find a diagram in $X$ as in \eqref{diagnk}.
\end{prop}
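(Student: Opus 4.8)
The plan is to move the entire question along a Joyal equivalence into a fibrant simplicial category, where Theorem \ref{thm3.5} applies, and then transport the conclusion back by means of Lemma \ref{lem6.2}. We may assume $r\geq 2$, since for $r=1$ every class survives to $E^{1}$ and a diagram as in \eqref{diagnk} is simply a chosen edge of $X$ representing $f$, which always exists. First I would invoke Lemma \ref{lem6.1}: as we are speaking of the spectral sequence of \S\ref{con2.6}, $X$ has the countable (co)limits appearing there, and (as in the proof of that Lemma) there is a fibrant simplicial category $\mX$ together with a Joyal equivalence $\varepsilon\colon X\to\fB(\mX)$. Being an equivalence of $\infty$-categories, $\varepsilon$ preserves the relevant homotopy (co)limits and induces weak equivalences of mapping spaces, so it carries the Moore cycles and chains objects of $\xd$, and the fibre sequences \eqref{eqhfibseq} defining the exact couple, to those for $\varepsilon\circ\xd$ and the cogroup object $\varepsilon(y)$; hence the homotopy spectral sequence of $\xd$ in $X$ is isomorphic, from $E^{1}$ onward, to that of $\varepsilon\circ\xd$ in $\fB(\mX)$. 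As in the proof of Lemma \ref{lem6.1}, $\varepsilon\circ\xd$ corresponds, via \cite[Lemma~6.5]{Riehl1} (see also \cite[Cor.~4.2.4.7]{Lurie}), to a homotopy coherent simplicial object $\zd\colon\DK(\Delta\op)\to\mX$, and Lemma \ref{lem6.1} identifies its spectral sequence in $\fB(\mX)$ with the spectral sequence of \S\ref{con3.3} for $\zd$ in $\mX$. Composing these identifications, $\lra{f}$ survives to the $E^{r}$-page of the spectral sequence of \S\ref{con2.6} for $\xd$ in $X$ if and only if $\cP(\varepsilon)\lra{f}$ survives to the $E^{r}$-page of the spectral sequence of \S\ref{con3.3} for $\zd$ in $\mX$.

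Next I would apply Theorem \ref{thm3.5} in $\mX$: the class $\cP(\varepsilon)\lra{f}$ survives to $E^{r}$ precisely when $\mX$ admits a homotopy coherent diagram $\DK(J)\to\mX$ of the form \eqref{diagnk}, where $J:=\pbDel\op_{n-r+1,n}\times\bbo$, and where ``of the form \eqref{diagnk}'' means that the underlying functor $J\to\cP(\mX)$ is the diagram $D'$ assembled from the face maps of $\zd$, the zero maps, and a representative of $\cP(\varepsilon)\lra{f}$. Since $J$ is an ordinary category, both $\DK(J)$ (its free resolution, Definition \ref{dfgf}) and $\fC(\cB(J))$ are cofibrant replacements of $J$ in the Bergner model structure; hence, by the Quillen equivalence of Theorem \ref{thm5.2} together with adjunction, homotopy coherent diagrams $\DK(J)\to\mX$ correspond, compatibly with the underlying functors to $\cP(\mX)\cong\Ho(\mX)\cong\cP(\fB(\mX))$, to maps of simplicial sets $\cB(J)\to\fB(\mX)$. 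In particular, a homotopy coherent diagram $\DK(J)\to\mX$ of the form \eqref{diagnk} exists if and only if $D'$ admits a lift $\cB(J)\to\fB(\mX)$ in the sense of Definition \ref{dlift}. But $D'=\cP(\varepsilon)\circ D$, where $D\colon J\to\cP(X)$ is the diagram \eqref{diagnk} determined by $\xd$, $f$ and the zero maps of $X$; so Lemma \ref{lem6.2}, applied to the Joyal equivalence $\varepsilon$, shows that $D'$ has such a lift if and only if $D$ has a lift $\cB(J)\to X$ --- that is, if and only if there is a diagram in $X$ as in \eqref{diagnk}. Combining this with the first paragraph gives the Proposition.

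The step I expect to be the main obstacle is the second one: verifying that the Quillen equivalence $\fC\dashv\fB$ genuinely identifies homotopy coherent diagrams of the prescribed shape \eqref{diagnk} in $\mX$ with quasi-categorical lifts of the corresponding diagram in $\cP(X)$. What has to be checked is that the ``shape'' information recorded in the homotopy category is preserved through the three successive identifications $X\simeq\fB(\mX)$, $\varepsilon\circ\xd\simeq\fB(\zd)$, and $\DK(J)\simeq\fC(\cB(J))$ --- i.e. that at each stage the induced functor into the homotopy category is the expected one, so that ``being of the form \eqref{diagnk}'' transports correctly between the simplicial-categorical and quasi-categorical sides. Once that bookkeeping is in place, the rest is formal: invariance of the homotopy spectral sequence under Joyal equivalences (already encapsulated in Lemma \ref{lem6.1}) together with the lifting criterion of Lemma \ref{lem6.2}.
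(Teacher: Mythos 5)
Your proposal is correct and follows essentially the same route as the paper: reduce along a Joyal equivalence to the case $X=\fB(\mX)$ using Lemma \ref{lem6.2}, identify the two spectral sequences via Lemma \ref{lem6.1}, translate quasi-categorical diagrams $\cB(J)\to\fB(\mX)$ into homotopy coherent diagrams $\DK(J)\to\mX$ (the paper cites \cite[Lemma 6.5]{Riehl1} for this, where you argue via the Quillen equivalence and cofibrant replacements --- the same content), and conclude by Theorem \ref{thm3.5}. The only difference is ordering and the level of detail in the bookkeeping step, which the paper handles by direct citation.
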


\begin{proof}
The class $\lra{f}$ survives to $E\sp{2}\sb{n,p}$ if and only if
$\lra{f}\in C\sb{n}\pi_{0}\Map_{X}(\Sigma^{p}y,\xd)$, in which case we have a diagram
$\widetilde{D}:\DK(\pbDel\op_{n-r, n} \times\bbo)\to\cP(X)$, which we would like to lift to $X$.
Using Lemma \ref{lem6.2}, we can reduce to the case that $X = \fB(\mX)$ for some fibrant simplicial category $\mX$. By \cite[Lemma 6.5]{Riehl1} a diagram
$\cB(\pbDel_{n-r, n}\op) \rightarrow \fB(\mX)$ is equivalent to a homotopy coherent
diagram $\DK(\pbDel_{n-r, n}\op) \rightarrow \mX$. Furthermore, by Lemma \ref{lem6.1} the spectral sequence for $y$ and $\xd$ is isomorphic to the spectral sequence for the
corresponding simplicial object in $\mX$. Thus, the statement follows from Theorem \ref{thm3.5}.
\end{proof}

\begin{remark}\label{con6.4}
Suppose that $d\colon\Delta^{n} \rightarrow X$ is a diagram in a pointed quasi-category $X$ witnessing an $n$-fold composition. We note that the map $X_{/0} \rightarrow X $ is a trivial fibration (\cite[Proposition 1.2.12.4]{Lurie}). Thus, we can find an extension $\gamma\colon\Delta^{n+1} \cong \Delta^{0} * \Delta^{n} \rightarrow X$ with the vertex of $\Delta^{0}$ mapping to the zero object. Let us choose a $1$-simplex
  $s\colon y \rightarrow 0$. Then $\gamma$ and $s$ define a map
$$
\xymatrix{
  \Delta^{1} \cup_{\Delta^{0}} \Delta^{n+1} \ar[r] \ar[d] & X \\
   \Delta^{n+2} \ar@{.>}[ur]_{\sigma} &
   }
$$
The vertical map is inner anodyne by \cite[Lemma 3.3]{Riehl1}, so we can find the lift $\sigma$. We can think of the face $\sigma \circ d^{1}$ as a \emph{canonical witness}
$f_{n} \circ \cdots \circ f_{1} \circ 0$, which must exist because of the universal property of the zero map.
\end{remark}

\begin{example}\label{exam6.5}
  Suppose that we want to determine whether $\lra{f}\in\pi_{0}C_{1}\Map_{X}(\Sigma^{p}y,\xd)$ survives to $E\sp{\infty}\sb{1,p}$, and thus
  represents a vertex in $\Map_{X}(\Sigma^{p+1}y,\|\xd\|)$. For this we need a lift of the diagram
\myudiag[\label{eqexphexagon}]{
\Sigma^{p}y~~ \ar@<2ex>[r]^{f = d_{0}} \ar[r]^{0 = d_{1}} \ar@<-2ex>[r]^{0 = d_{2}} & x_{1}\ar@<1ex>[r]^{d_{0}} \ar@<-1ex>[r]_{d_{1}} & x_{0} \ar[r]_{d_{0}} & \|\xd\|
}
in $\cP(X)$ (compare \eqref{diagnkeq}) to the diagram in Figure \ref{fqctwoper}:

\stepcounter{theorem}
\begin{figure}[htbp]
\begin{center}
\begin{tikzpicture}
                 \node at (3, 0) (1vert3) {$\Sigma^{p}y$};
                    \node  at (0, 0) (1vert2) {$x_{1}$};
                 \node at (1.5, 2.6) (1vert1) {$x_{0}$};
                 \node at (1.5, 1.3) (1vert0) {\text{\small$\|\xd\|$}};
                \node at (3, -1) (2vert3) {$\Sigma^{p}y$};
                \node at (0, -1) (2vert2) {$x_{1}$};
                \node at (1.5, -3.6) (2vert1) {$x_{0}$};
                \node at (1.5, -2.3) (2vert0) {\text{\small$\|\xd\|$}};

                \node at (4.5, -1) (3vert3) {$\Sigma^{p}y$};
                 \node at (6, -3.6) (3vert2) {$x_{1}$};
               \node at (3, -3.6) (3vert1) {$x_{0}$} ;
               \node at (4.5, -2.3) (3vert0) {\text{\small$\|\xd\|$}};

               \node at (4.5, 0) (4vert3) {$\Sigma^{p}y$};
               \node at (6, 2.6) (4vert2) {$x_{1}$};
               \node at (3, 2.6) (4vert1) {$x_{0}$};
               \node at (4.5, 1.3) (4vert0) {\text{\small$\|\xd\|$}};

                 \node at (6, 0) (5vert3) {$\Sigma^{p}y$};
                  \node at (7.5, 2.6) (5vert2) {$x_{1}$};
                    \node  at (9, 0) (5vert1) {$x_{1}$};
                 \node at (7.5, 1.3) (5vert0) {\text{\small$\|\xd\|$}};
                \node at (6, -1) (6vert3) {$\Sigma^{p}y$};
                \node at (7.5, -3.6) (6vert2) {$x_{1}$};
         \node at (9, -1) (6vert1) {$x_{0}$};
                \node at (7.5, -2.3) (6vert0) {\text{\small$\|\xd\|$}};

                 \draw (1vert3) -- (1vert2) node [midway, sloped, below] {\text{\tiny$d_{1} = 0 $}};
                \draw (1vert3) -- (1vert1) node [midway, sloped, above] {\text{\tiny$(d_{1}d_{2})$}};;
                \draw (1vert3) -- (1vert0);
                \draw (1vert2) -- (1vert1) node [midway, sloped, above] {\text{\tiny$d_{1}$}};
                \draw (1vert2) -- (1vert0) node [midway, sloped, below] {\text{\tiny$(d_{0}d_{1})$}};
                \draw (1vert1) -- (1vert0) node [midway, sloped] {\text{\tiny$d_{0}$}};
                \draw (2vert3) -- (2vert2) node [midway, sloped, above] {\text{\tiny$d_{1}$}};
                \draw (2vert3) -- (2vert1) node [midway, sloped, below] {\text{\tiny$(d_{0}d_{1})$}};
                \draw (2vert3) -- (2vert0);
                \draw (2vert2) -- (2vert1) node [midway, sloped, below] {\text{\tiny$d_{0}$}};
                \draw (2vert2) -- (2vert0) node [midway, sloped, above] {\text{\tiny$(d_{0}d_{1})$}};
                \draw (2vert1) -- (2vert0) node [midway, sloped] {\text{\tiny$d_{0}$}};
                \draw (3vert3) -- (3vert2) node [midway, sloped, above] {\text{\tiny$d_{0} = f$}};
                \draw (3vert3) -- (3vert1)  node [midway, sloped, above] {\text{\tiny$(d_{0}d_{1})$}};
                \draw (3vert3) -- (3vert0);
                \draw (3vert2) -- (3vert1) node [midway, sloped, below] {\text{\tiny$d_{0}$}};
                \draw (3vert2) -- (3vert0) node [midway, sloped, below] {\text{\tiny$(d_{0}d_{1})$}};
                \draw (3vert1) -- (3vert0) node [midway, sloped, below] {\text{\tiny$d_{0}$}};
                \draw (4vert3) -- (4vert2) node [midway, sloped, below] {\text{\tiny$d_{2} = 0$}};
                \draw (4vert3) -- (4vert1) node [midway, sloped, below] {\text{\tiny$(d_{1}d_{2})$}};
                \draw (4vert3) -- (4vert0);
                \draw (4vert2) -- (4vert1) node [midway, sloped, above] {\text{\tiny$d_{1}$}};
                \draw (4vert2) -- (4vert0) node [midway, sloped, above] {\text{\tiny$(d_{0}d_{1})$}};
                \draw (4vert1) -- (4vert0) node [midway, sloped, above] {\text{\tiny$d_{0}$}};
                \draw (5vert3) -- (5vert2) node [midway, sloped, above] {\text{\tiny$d_{2} = 0$}};
                \draw (5vert3) -- (5vert1) node [midway, sloped, below] {\text{\tiny$(d_{0}d_{2})$}};
                \draw (5vert3) -- (5vert0);
                \draw (5vert2) -- (5vert1) node [midway, sloped, above] {\text{\tiny$d_{0}$}};
                \draw (5vert2) -- (5vert0) ;
                \draw (5vert1) -- (5vert0) node [midway, sloped, below] {\text{\tiny$d_{0}$}};

                \draw (6vert3) -- (6vert2) node [midway, sloped, below] {\text{\tiny$d_{0}$}};
                \draw (6vert3) -- (6vert1) node [midway, sloped, above] {\text{\tiny$(d_{0}d_{2})$}};
                \draw (6vert3) -- (6vert0);
                \draw (6vert2) -- (6vert1) node [midway, sloped, below] {\text{\tiny$d_{1}$}};
                \draw (6vert2) -- (6vert0) ;
                \draw (6vert1) -- (6vert0) node [midway, sloped, above] {\text{\tiny$d_{0}$}};

        \draw [<->, dotted, thick] (1.5, 0.5)  to [out=-140, in=140] (1.5, -1.5);
        \draw [<->, dotted, thick] (2vert0)  to [out=-40, in=200] (3vert0);
        \draw [<->, dotted, thick] (3vert0)  to [out=-20, in=220] (6vert0);
        \draw [<->, dotted, thick] (7.5, 0.5)  to [out=-30, in=30] (7.5, -1.5);
        \draw [<->, dotted, thick] (1vert0)  to [out=60, in=160] (4vert0);
        \draw [<->, dotted, thick] (4vert0)  to [out=20, in=160] (5vert0);

        \node at (0, 2) {\boxed{\text{\tiny$d_{0}d_{1}d_{1}$}}};
        \node at (0, -3) {\boxed{\text{\tiny$d_{0}d_{0}d_{1}$}}};
        \node at (4.5, 3.3) {\boxed{\text{\tiny$d_{0}d_{1}d_{2}$}}};
        \node at (9, 2) {\boxed{\text{\tiny$d_{0}d_{0}d_{2}$}}};
        \node at (9, -3) {\boxed{\text{\tiny$d_{0}d_{1}d_{0}$}}};
        \node at (4.5, -4.5) {\boxed{\text{\tiny$d_{0}d_{0}d_{0}$}}};
\end{tikzpicture}
\end{center}
\caption{Quasi-category version of the $2$-permutahedron}
\label{fqctwoper}
\end{figure}

This consists of six $3$-simplices (witnesses for the compositions $d_{i}d_{j}d_{k}$ in  \eqref{eqexphexagon}), with faces glued together along the dotted arrows.
Note that the four labelled $d_{i}d_{j}d_{k}$ with $k > 0$
automatically have fillers, by \S \ref{con6.4}. Thus, $\lra{f}$ survives if and only
if we can find a filler for the union of two $3$-simplices with boundary:

\begin{center}
\begin{tikzpicture}{
%
                \node at (4.5, -1) (3vert3) {$\Sigma^{p}y$};
                 \node at (6, -3.6) (3vert2) {$x_{1}$};
               \node at (3, -3.6) (3vert1) {$x_{0}$} ;
               \node at (4.5, -2.3) (3vert0) {\text{\small$\|\xd\|$}};

                \node at (6, -1) (6vert3) {$\Sigma^{p}y$};
                \node at (7.5, -3.6) (6vert2) {$x_{1}$};
         \node at (9, -1) (6vert1) {$x_{0}$};
                \node at (7.5, -2.3) (6vert0) {\text{\small$\|\xd\|$}};

                \draw (3vert3) -- (3vert2) node [midway, sloped, above] {\text{\tiny$d_{0}$}};
                \draw (3vert3) -- (3vert1) node [midway, sloped, above] {\text{\tiny$(d_{0}d_{2})$}};
                \draw (3vert3) -- (3vert0);
                \draw (3vert2) -- (3vert1) node [midway, sloped, below] {\text{\tiny$d_{1}$}};
                \draw (3vert2) -- (3vert0) ;
                \draw (3vert1) -- (3vert0) node [midway, sloped, above] {\text{\tiny$d_{0}$}};

                \draw (6vert3) -- (6vert2) node [midway, sloped, below] {\text{\tiny$d_{0}$}};
                \draw (6vert3) -- (6vert1) node [midway, sloped, above] {\text{\tiny$(d_{0}d_{2})$}};
                \draw (6vert3) -- (6vert0);
                \draw (6vert2) -- (6vert1) node [midway, sloped, below] {\text{\tiny$d_{0}$}};
                \draw (6vert2) -- (6vert0) ;
                \draw (6vert1) -- (6vert0) node [midway, sloped, above] {\text{\tiny$d_{0}$}};

        \draw [<->, dotted, thick] (3vert0)  to [out=-20, in=220] (6vert0);
        }
\end{tikzpicture}
\end{center}

Again, all faces but the bottom two are either `coherences' which are part of the data for $\xd$, or are `canonical' faces witnessing composition with the zero map. Thus, whether or not we can find an appropriate filler $\partial \Delta^{3} \cup_{\Delta^{2}} \partial \Delta^{3} \rightarrow X$ (and thus show that $\lra{f}$ survives) is determined by a choice of $2$-simplices
$$
\xymatrix
{
 & x_{1} \ar[dr]^{d_{0}} & & & \ar[dr]^{d_{1}} x_{1} & \\
\Sigma^{p}y \ar[ur]^{f} \ar[rr] \ar@{}[urr]_{G_{0}} &  & x_{0} & \Sigma^{p}y \ar[ur]^{f} \ar@{}[urr]_{G_{1}} \ar[rr] &  & x_{0}
}
$$
witnessing the composition. We can think of this as the `indeterminacy' in trying to rectify \eqref{eqexphexagon}.
\end{example}

\begin{example}\label{exam6.6}
Next, we want to know whether  $\lra{f}\in\pi_{0}C_{2}\Map_{X}(\Sigma^{p}y,\xd)$ survives to $E\sp{\infty}\sb{2,p}$. In the quasi-category $X$ we have the boundary of six $4$-simplices glued together along the edges of the hexagon, which are of the form $\boxed{d_{i}d_{j}d_{k}f}$ with $d_{i}d_{j}d_{k} = d_{0}d_{1}d_{2}$, and if we can find a filler, then $\lra{f}$ survives to $E\sp{\infty}\sb{2,p}$.

By definition, $C_{2}(\xd)$ is the limit of a diagram $\cB(_{+}\Delta_{0, 2}\op) \rightarrow X$, whose $1$-skeleton is
\myudiag[\label{tctwo}]{
x_{2} \ar@<2ex>[r]^{0} \ar[r]^{d_{1}} \ar@<-2ex>[r]^{d_{2}} & x_{1} \ar@<1ex>[r]^{d_{0}} \ar@<-1ex>[r]^{d_{1}} & x_{0}~.
}
Again, the $2$-simplices are either `coherences' for $\xd$ or witnesses for composition with
zero). By cofinality (\cite[Definition 4.1.3.1]{Lurie}), we can also write $C_{2}(\xd)$ as the limit of a diagram $\bar{D}\colon\cB(\pbDel_{-1, 2}\op ) \rightarrow X$ extending \eqref{tctwo} to the right by the augmentation $x_{0} \rightarrow\|\xd\|$.

The $1$-simplex $f : \Sigma^{p}y \rightarrow C_{2}(\xd)$ yields a $1$-simplex of $X_{/\bar{D}}$ and hence a map $\Delta^{1}\ast\cB(\pbDel_{-1, 2}\op ) \rightarrow X$.

The image of this map includes fillers for the $4$-simplices of the form $\boxed{d_{i}d_{j}d_{k}f}, k > 0$, so we are left with the $4$-simplices $d_{0}d_{1}d_{0}f, d_{0}d_{0}d_{0}f$, having two faces in common.
Thus (as in Example \ref{exam6.5}), we have fillers for all of the $4$-simplices except for
$\boxed{d_{0}d_{1}d_{0}f}$ and $\boxed{d_{0}d_{0}d_{0}f}$.
The $3$-dimensional facets of these two $4$-simplices are depicted in Figure \ref{fqcthrper}
(with the two common facets in the center).

The arrows indicate gluing of $3$-simplices along faces,
and the shaded $2$-simplices represent witnesses of composition with $0$. The $0$-facets
of the two (in the top right and top left)correspond to fillers of the two $3$-simplices we encountered in \S \ref{exam6.5}. The bottom $3$-simplices (labelled $H_{0, 0, 0}$  and
$H_{0, 0, 1}$) are coherence witnesses for $\xd$.

\stepcounter{theorem}
\begin{figure}[htbp]
\begin{center}
\begin{tikzpicture}
               \node at (1.5, 1.3) (1vert3) {$4$};
               \node at (3, 0) (1vert2) {$3$};
                 \node at (0, 0) (1vert1) {$2$};
               \node at (1.5, 2.6) (1vert0) {$1$};
\node at (1.5, -2.3) (2vert3) {$4$};
  \node at (3, -1) (2vert2) {$3$};
                 \node at (0, -1) (2vert1) {$2$};
               \node at (1.5, -3.6) (2vert0) {$0$};

  \node at (2, -4) (3vert3) {$3$};
                 \node at (3.5, -6.6) (3vert2) {$2$};
               \node at (0.5, -6.6) (3vert1) {$1$};
               \node at (2, -5.3) (3vert0) {$0$};
 \node at (5.5, 1.6) (4vert3) {$4$};
                \node at (7, -1) (4vert2) {$3$};
                 \node at (4, -1) (4vert1) {$1$};
               \node at (5.5, 0.3) (4vert0) {$0$};

 \node at (5.5, -2.4) (5vert3) {$4$};
                \node at (7, -5) (5vert2) {$2$};
                 \node at (4, -5) (5vert1) {$1$};
               \node at (5.5, -3.7) (5vert0) {$0$};

                 \node at (9.5, 1.3) (6vert3) {$4$};
                 \node at (8, 0) (6vert2) {$3$};
                   \node at (11, 0) (6vert1) {$2$};
               \node at (9.5, 2.6) (6vert0) {$1$};
\node at (9.5, -2.3) (7vert3) {$4$};
                 \node at (8, -1) (7vert2) {$3$};
                   \node at (11, -1) (7vert1) {$2$};
               \node at (9.5, -3.6) (7vert0) {$0$};

  \node at (10, -4) (8vert3) {$3$};
                 \node at (11.5, -6.6) (8vert2) {$2$};
               \node at (8.5, -6.6) (8vert1) {$1$};
               \node at (10, -5.3) (8vert0) {$0$};


                \draw (1vert3) -- (1vert2) node [midway, below] {\text{\tiny$f$}};
                \draw (1vert3) -- (1vert1) node [midway, sloped, below] {\text{\tiny$*$}};
                \draw (1vert3) -- (1vert0) node [midway, sloped, below] {\text{\tiny$*$}};
                \draw (1vert2) -- (1vert1) node [midway, sloped, below] {\text{\tiny$d_{0}$}};
                \draw (1vert2) -- (1vert0)  node [midway, sloped, above] {\text{\tiny$d_{0}d_{1}$}};
                \draw (1vert1) -- (1vert0) node [midway, sloped, above] {\text{\tiny$d_{0}$}};

                \draw (2vert3) -- (2vert2) node [midway, above] {\text{\tiny$f$}};
                \draw (2vert3) -- (2vert1) node [midway, sloped, below] {\text{\tiny$*$}};
                \draw (2vert3) -- (2vert0) node [midway, sloped, below] {\text{\tiny$*$}};
                \draw (2vert2) -- (2vert1) node [midway, above] {\text{\tiny$d_{0}$}};
                \draw (2vert2) -- (2vert0) ;
                \draw (2vert1) -- (2vert0) node [midway, sloped, below] {\text{\tiny$d_{0}d_{1}$}};

                \draw (3vert3) -- (3vert2) node [midway, sloped, above] {\text{\tiny$d_{0}$}};
                \draw (3vert3) -- (3vert1) node [midway, sloped, above] {\text{\tiny$d_{0}d_{1}$}};
                \draw (3vert3) -- (3vert0);
                \draw (3vert2) -- (3vert1) node [midway, sloped, below] {\text{\tiny$d_{0}$}};
                \draw (3vert2) -- (3vert0) node [midway, sloped, below] {\text{\tiny$d_{0}d_{1}$}};
                \draw (3vert1) -- (3vert0) node [midway, sloped, below] {\text{\tiny$d_{0}$}};

                \draw (4vert3) -- (4vert2) node [midway, above] {\text{\tiny$f$}};
                \draw (4vert3) -- (4vert1) node [midway, sloped, above] {\text{\tiny$*$}};
                \draw (4vert3) -- (4vert0) ;
                \draw (4vert2) -- (4vert1)  node [midway, sloped, below] {\text{\tiny$d_{0}d_{1}$}};
                \draw (4vert2) -- (4vert0);
                \draw (4vert1) -- (4vert0)  node [midway, sloped, above] {\text{\tiny$d_{0}$}};

                \draw (5vert3) -- (5vert2) node [midway, above] {\text{\tiny$*$}};
                \draw (5vert3) -- (5vert1) node [midway, above] {\text{\tiny$*$}};
                \draw (5vert3) -- (5vert0);
                \draw (5vert2) -- (5vert1) node [midway, sloped, below] {\text{\tiny$d_{0}$}};
                \draw (5vert2) -- (5vert0) ;
                \draw (5vert1) -- (5vert0) node [midway, below] {\text{\tiny$d_{0}$}};

                \draw (6vert3) -- (6vert2) node [midway, below] {\text{\tiny$f$}};
                \draw (6vert3) -- (6vert1) node [midway, sloped, below] {\text{\tiny$*$}};
                \draw (6vert3) -- (6vert0) node [midway, sloped, below] {\text{\tiny$*$}};
                \draw (6vert2) -- (6vert1) node [midway, sloped, below] {\text{\tiny$d_{1}$}};
                \draw (6vert2) -- (6vert0) node [midway, sloped, above] {\text{\tiny$d_{0}d_{1}$}};
                \draw (6vert1) -- (6vert0);

                \draw (7vert3) -- (7vert2) node [midway, below] {\text{\tiny$f$}};
                \draw (7vert3) -- (7vert1) node [midway, sloped, above] {\text{\tiny$*$}};
                \draw (7vert3) -- (7vert0) node [midway, sloped, above] {\text{\tiny$*$}};;
                \draw (7vert2) -- (7vert1) node [midway, sloped, above] {\text{\tiny$d_{1}$}};
                \draw (7vert2) -- (7vert0) node [midway, sloped, below] {\text{\tiny$d_{0}d_{1}d_{2}$}};
                \draw (7vert1) -- (7vert0);

                \draw (8vert3) -- (8vert2) node [midway, above] {\text{\tiny$d_{1}$}};
                \draw (8vert3) -- (8vert1) node [midway, sloped, above] {\text{\tiny$d_{0}d_{1}$}};
                \draw (8vert3) -- (8vert0);
                \draw (8vert2) -- (8vert1) node [midway, below] {\text{\tiny$d_{0}$}};
                \draw (8vert2) -- (8vert0) node [midway, sloped, below] {\text{\tiny$d_{0}d_{1}$}};
                \draw (8vert1) -- (8vert0) node [midway, above] {\text{\tiny$d_{0}$}};

\node at (2.0, 4) {$\boxed{d_{0}d_{0}d_{0}f}$};
        \node at (10.0, 4.0) {$\boxed{d_{0}d_{1}d_{0}f}$};
        \node at (1.5, 0.5) {\text{\tiny$G_{0}$}};
        \node at (1.5, -1.5) {\text{\tiny$G_{0}$}};
        \node at (9.5, 0.5) {\text{\tiny$G_{1}$}};
        \node at (9.5, -1.5) {\text{\tiny$G_{1}$}};
        \node at (2.0, -7.5) {$H_{0,0,0}$};
        \node at (10.0, -7.5) {$H_{0,0,1}$};
        \node at (5.5, -0.6) {\text{\tiny$H_{0,00}$}};
        \node at (5.5, -4.6) {\text{\tiny$H_{0,0}$}};
        \draw [opacity = 0.3, pattern=north west lines, pattern color=black] (1vert3.center) to (1vert1.center) to (1vert0.center);
        \draw [opacity = 0.3, pattern=north west lines, pattern color=black] (2vert3.center) to (2vert1.center) to (2vert0.center);
        \draw [opacity = 0.3, pattern=north west lines, pattern color=black] (4vert3.center) to (4vert1.center) to (4vert0.center);
        \draw [opacity = 0.3, pattern=north west lines, pattern color=black] (5vert3.center) to (5vert1.center) to (5vert0.center);
        \draw [opacity = 0.3, pattern=north west lines, pattern color=black] (5vert3.center) to (5vert2.center) to (5vert0.center);
        \draw [opacity = 0.3, pattern=north west lines, pattern color=black] (6vert3.center) to (6vert1.center) to (6vert0.center);
        \draw [opacity = 0.3, pattern=north west lines, pattern color=black] (7vert3.center) to (7vert1.center) to (7vert0.center);


        \draw [<->, dotted, thick] (2, 0.5)  to (2, -1.5);
        \draw [<->, dotted, thick] (3vert0)  to [out=40, in=30] (2.5, -2.5);
        \node at (3.5, -3) {\text{\tiny$\bottom$}};
        \draw [<->, dotted, thick] (1vert3)  to [out=40, in=120] (4.5, 0.5);
        \node [rotate=-40] at (4.3, 1.4) {\text{\tiny$\bottom$}};
        \draw [<->, dotted, thick] (4vert0)  to [out=130, in=120] (5vert0);
                \draw [<->, dotted, thick] (6.5, 0.5)  to [out=0, in=120] (6vert3);
                \node [rotate=30] at (7.3, 1) {\text{\tiny$\bottom$}};
                \draw [<->, dotted, thick] (10, 0.5)  to (10, -1.5);
                \draw [<->, dotted, thick] (5.0, -4.6)  to [out=-90, in=-80] (9.5, -6);
                \draw [<->, dotted, thick] (8vert0)  to [out=60, in=-60] (10.3, -2.8);
                \node  at (11, -3.2) {\text{\tiny$\bottom$}};

        \end{tikzpicture}
\end{center}
\caption{Partial quasi-category version of the $3$-permutahedron}
\label{fqcthrper}
\end{figure}
\end{example}

\newpage

\begin{remark}\label{rmk6.7}
  For a coherent simplicial object in a quasi-category, an obstruction to finding a lift as in Proposition \ref{thm6.3} must be expressed in terms of $r+1$-dimensional simplices. These correspond in the simplicially enriched case to vertices of the permutahedron $\mP^{r}$ \ -- \
in other words, the full diagram of Figure \ref{fqctwoper} is actually the dual of the permutahedron $\mP^{2}$. The edges of the permutahedron express the fact that we glue two of these $r+1$-dimensional simplices together along a face, and so on. It is therefore difficult to visualize the obstruction to finding a lift for \eqref{diagnk} when $r$ is large.
\end{remark}

%
%
\sect{The homotopy spectral sequence of a cosimplicial object}
\label{cdhssco}

In this section, we consider the dual case of the homotopy spectral sequence of a
cosimplicial object in an $\infty$-category (see \cite[X, \S 6]{BK}), and
briefly sketch the construction of the differentials, mainly in order to point out the differences with the simplicial case described in Section \ref{cdvho}.

By \cite[X, \S 5]{BK}, the Reedy model structure on cosimplicial spaces
$\sSet^{\Delta}$ is simplicial, with simplicial enrichment denoted by
$\Map_{\sSet^{\Delta}}(-, -)$.
We write $\Tot(\xu) =\Map_{\sSet^{\Delta}}(\Delta, \xu)$, where $\Delta$ is the cosimplicial space $[\bn] \mapsto \Delta^{n}$, and set  $\Tot^{n}(\xu):= \Map_{\sSet^{\Delta}}(\sk_{n}\Delta,\xu)$.

\begin{defn}\label{con7.1}
Suppose that $\xu\colon\Delta \rightarrow \sSet$ is a pointed, Reedy fibrant cosimplicial space such that each $x^{n}$ is an abelian homotopy group object in the category
of pointed spaces (so $\pi_{i}(x^{n})$ is an abelian group for all $i$). For each
$n \in \NN$, the $n$-th \emph{normalized cochain object} of $\xu$ is
\begin{myeq}\label{normcoch}
N^{n}(\xu)~:=~\bigcap_{i=0}^{n-1}\Ker(s^{i}\colon x^{n} \rightarrow x^{n-1})~.
\end{myeq}
We have fibration sequences
$$
\Omega^{n} N^{n}(\xu)~\xrightarrow{i_{n}}~\Tot^{n}(\xu)~\xrightarrow{p_{n}}~\Tot^{n-1}(\xu).
$$
The long exact sequence of these fibrations sequences gives rise to an exact couple of groups (by the assumption that each object is an abelian group object)
$$
\xymatrix
{
  \oplus_{p, n \in\NN} \pi_{p+n}\Tot^{n}(\xu) \ar[rr]^{p_{n}}   && \oplus_{n \in\NN} \pi_{p+n}\Tot^{n}(\xu) \ar[dl]^{\delta} \\
& \oplus_{p, n \in \NN} \pi_{p}\Omega^{n}N^{n}(\xu) \ar[ul]^{i_{n}}  &
}
$$
The spectral sequence associated to the exact couple
$$
E_{1}\sp{n,p} = \pi_{p}\Omega^{n}N^{n}(\xu) \implies \pi_{p}\Tot(\xu)
$$
is called the \emph{homotopy spectral sequence} of $\xu$ (see \cite{BK} or \cite[\S 8.1]{GJ2}). As before, to distinguish the differentials from (co)face maps, we denote the former by $\fd_{r}$.
\end{defn}

\begin{defn}\label{def7.2}
Suppose that $\mX$ is a fibrant simplicial category, $y \in \Ob(\mX)$ is a cogroup object,
and $\xu\colon\DK(\Delta) \rightarrow \mX$ is a coherent cosimplicial object.
Rectifying the homotopy coherent diagram $[\bn] \mapsto \Map_{\mX}(y, x^{n})$
(pointed by the zero maps, as in \S \ref{con3.3}) to a cosimplicial space $\wzu$, we may then choose a Reedy fibrant replacement $\zu$, and we call the homotopy spectral sequence for $\zu$ the \emph{homotopy spectral sequence} in $\mX$, associated to $\xu$ and $y$.
\end{defn}

\begin{construction}\label{con7.3}
For $y$ and $\xu$ as in \S \ref{def7.2}, suppose for simplicity that the cosimplicial space
$[\bn] \mapsto \Map_{\mX}(y, x^{n})$ is Reedy fibrant (in particular strict), and that $y$ itself is a suspension.

Assume given an element $\lra{f}$ in $E_{1}\sp{n,p}$ of the homotopy spectral sequence. Since $\mX(y, -)$ commutes with (homotopy) limits, we have
$$
\Omega^{n}N^{n}\Map_{\mM}(y, \xu) = \cap_{i=0}^{n-1} \Omega^{n}\Ker(\Map_{\mX}(y, x^{n}) \xrightarrow{\mM(-, s^{i})} \Map_{\mX}(y, x^{n-1}))
$$
Thus, we can represent $\lra{f}$ by a map $f\colon\Sigma^{p}y \otimes \Delta^{n} \rightarrow x^{n}$ with $s^{i}(f) = 0$ for $i < n$. Note that $f$ and  the maps
$0\colon\Sigma^{p}y \otimes \Delta^{k}\to x^{k}$ for $k < n$
together constitute an element of $\Tot^{n}$ of the cosimplicial space $\Map_{\mX}(y, \xu)$.

We write $\tr(\Delta^{n})$ for a triangulation of $\Delta^{n}$
given by the cone over $\partial \Delta^{n}$ with cone point at
the barycenter $b$, having $n$-simplices
$\sigma\sb{0},\dotsc\sigma\sb{n}$ (each the cone over the
corresponding $(n-1)$-simplex of $\partial \Delta^{n}$). By
\cite[X, \S 7]{BK} we have $\fd\sb{1}\lra{f} = \sum_{i=0}^{n}
(-1)\sp{i}d^{i} \circ f$, so we can represent it by a map
$\Sigma^{p-1}y \otimes\tr(\Delta^{n+1}) \rightarrow x^{n+1}$ which
restricts to $d^{i} \circ f$ on $\Sigma^{p-1}y\otimes\sigma\sb{i}$
(after identifying the $n$-simplex $\sigma\sb{i}$ with the
suspension of an $(n-1)$-simplex by sending the face opposite the
cone point $b$ to $0$). See Example \ref{examnone}.

For any $r\geq 2$, we may construct $\fd\sb{r}\lra{f}$ inductively, by assuming that
$\fd\sb{r-1}\lra{f}$ is represented by a map
$g\sp{r-1}\colon\Sigma\sp{p}y\otimes\Delta\sp{n+r-1}\to N\sp{n+r-1}\xu$ which restricts to $0$ on
$\Sigma\sp{p}y\otimes\partial\Delta\sp{n+r-1}$. In order for $\fd\sb{r}\lra{f}$ to be defined, we must have a nullhomotopy
$G\sp{r-1}\colon\Sigma\sp{p}y\otimes\Delta\sp{n+r}\to N\sp{n+r-1}\xu$ which is $0$ on all facets of $\Delta\sp{n+r}$ but the $0$-facet. Post-composing with the coface maps
$d\sp{i}\colon x\sp{n+r-1}\to x\sp{n+r}$ yields $n+r$ maps $d\sp{i}\circ G\sp{r-1}$
from all but the first of the $n+r$-simplices  $\sigma\sb{0},\dotsc\sigma\sb{n+r}$ of the triangulation $\tr\Delta\sp{n+r}$,
as before (with the facet $\sigma\sb{0}$ opposite the cone point $b$ mapping to $0$).
Since the simplices $\sigma\sb{i}$ are glued to each other in $\tr\Delta\sp{n+r}$ along
the base of the original $\Delta\sp{n+r}=C\Delta\sp{n+r-1}$, the boundary of
$\tr\Delta\sp{n+r}$ maps to $0$, so this defines a map
$\tr\Delta\sp{n+r}/\partial\tr\Delta\sp{n+r}\to\Map_{\mX}(y, N\sp{n+r}\xu)$.
representing $\fd\sb{r}\lra{f}$ in $\pi\sb{n+r}\Map_{\mX}(\Sigma\sp{p}y, N\sp{n+r}\xu)$.
\end{construction}

\begin{example}\label{examnone}
If $n=1$, then $\fd_{1}\lra{f}$ may be depicted by:
\mydiagram[\label{eqhexagon}]{
&& 0 && \\
\ar@{}[rrdr]_{\boxed{\text{\tiny$d^{2}f$}}} && &&  \ar@{}[lldl]^{\boxed{\text{\tiny$d^{0}f$}}} \\
&& \ar@{-}[uu]_{0} 0 && \\
0\ar@{-}[urr]^{0} \ar@{-}[uuurr]^{0} \ar[rrrr]_{0} && \ar@{}[urr]^>>>>>>>>>>>>>>>>>>>>>{\boxed{\text{\tiny$d^{1}f$}}} && \ar@{-}[ull]_{0} \ar@{-}[uuull]_{0} 0
}
where the maps $d\sp{i}f:\Sigma(\Sigma\sp{p-1}y)\otimes\Delta^{1}\to x\sp{2}$ are thought of
as mapping a $2$-simplex, rather than $\Delta^{1}\otimes\Delta^{1}$, to $\Map_{\mX}(\Sigma\sp{p-1}y, x^{2})$. In both versions the boundary maps to $0$, so
each of the $2$-simplices, as well as their alternating sum depicted in \eqref{eqhexagon}, represent elements in
$$
\pi\sb{2}\Map_{\mX}(\Sigma\sp{p-1}y, N\sp{2}\xu)~\cong~\pi\sb{p+1}\Map_{\mX}(y, N\sp{2}\xu).
$$
\end{example}

\begin{remark}\label{exam7.4}
Suppose that we now work with an arbitrary cosimplicial diagram $\xu$, with $f$
representing $\lra{f}\in E\sb{1}\sp{n,p} = \pi_{0}\Map_{\mX}(\Sigma^{p}y,\Omega^{n}N^{n}(\xu))$
(where $N^{n}(\xu)$ is now given by the homotopy limit corresponding to
\eqref{normcoch} \ -- \ compare \S \ref{csssec} and \S \ref{con2.6}).

Using \cite[X, Prop.\ 6.3ii]{BK}, we may represent $\lra{f}$ by a compatible sequence
of maps $f^{k}\colon\Sigma^{p}y\otimes \Delta^{k} \rightarrow x^{k}$ for $0\leq k\le n$, representing the image of $f$ in $E_{1}\sp{k,p}$. Note that if this sequence exists, it
is uniquely determined by $f=f\sp{n}$.

By comparing to the Reedy fibrant case, we see that we must have nullhomotopies
$F^{k}\colon f^{k} \sim 0$ for $k < n$, compatible in the sense that
$$
\xymatrix
{
C(\Sigma^{p}y \otimes \Delta^{k+1}) \ar[r]_>>>>>{F^{k+1}} & x^{k+1} \\
\ar[u]^{(\Id \otimes d^{i})} C(\Sigma^{p}y \otimes \Delta^{k}) \ar[r]_>>>>>>>>{F^{k}} & x^{k} \ar[u]_{d^{i}}
}
$$
commutes for all $0\leq i\leq k$.

The maps $d^{i} \circ F^{n-1}$ and $f^{n}$ thus glue together to yield a map
\begin{myeq}\label{cooperation}
\hat{F}^{n}\colon\Sigma^{p}y \otimes (\partial \Delta^{n} \times \Delta^{1} \cup_{\partial \Delta^{n} \times \{ 0\}} \Delta^{n} \times \{0\}) \rightarrow x^{n}.
\end{myeq}
We can think of this simplicial set in the domain as a `fattened up version' of the $n$-simplex, with the boundary mapping to zero. This represents $\lra{f}$ in $\pi_{n}\Map_{\mX}(\Sigma^{p}y, x^{n})$, and by comparison with \S \ref{con7.3}
we see that the value of differential $\fd\sb{1}$ at $\lra{f}$ is given by a map
$\Sigma^{p-1}y \otimes\tr(\Delta^{n+1}) \rightarrow x^{n+1}$. The higher differentials are
defined similarly.
\end{remark}

\begin{example}\label{examntwo}
For $n = 2$ we have:
\mydiagram[\label{eqfattet}]{
&& 0 \ar@/^1pc/[rrdddd]^{0} \ar@/_1pc/[lldddd]_{0} && \\
&& \ar@{-}[u] && \\
\ar@{}[rr]_{d^{2}F^{1}} && && \ar@{}[ll]^{d^{0}F^{1}} \\
& \ar@{-}[rr] \ar@{-}[uur] \ar@{}[ur]_>>>{\boxed{f^{2}}} &  & \ar@{-}[dr] \ar@{-}[uul] & \\
0\ar@{-}[rrrr]_{0} \ar@{-}[ur] && \ar@{}[u]_{d^{1}F^{1}} && 0
}
\end{example}

\begin{prop}\label{thm7.5}
A map $f$ as in \S \ref{exam7.4} represents an element at the $E_{r}$-page of the spectral sequence if and only if we can find a sequence of compatible maps $g^{k}\colon\Sigma^{p}y \otimes \Delta^{k} \rightarrow x^{k}$, with $n+1 \le  k \le r + n$, and with $g^{n+1}$ a representative for $\fd_{1}\lra{f}$, as well as compatible nullhomotopies $F^{k}\colon g^{k} \sim 0$ for all $k \le n+r-1$.
\end{prop}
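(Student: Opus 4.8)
The plan is to argue by induction on $r$, dualizing the analysis of \S \ref{con4.2} and \S \ref{con7.3}: survival to the $E_r$-page is peeled off the $\Tot$-tower one layer at a time, and the maps $g^{k}$ and nullhomotopies $F^{k}$ named in the statement are exactly a record of the liftings chosen at each stage. Here ``$\lra{f}$ survives to $E_r$'' is read in the multi-valued sense of the introduction: for \emph{some} coherent choice of representatives and nullhomotopies the differentials $\fd_{1}\lra{f},\dotsc,\fd_{r-1}\lra{f}$ all vanish.

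The first step is to reduce to the case in which the rectified cosimplicial space $\zu$ of \S \ref{def7.2} is Reedy fibrant, since by definition the homotopy spectral sequence of $\xu$ is that of $\zu$. By the dual of \cite[X, Prop.\ 6.3]{BK}, used already in \S \ref{exam7.4}, a compatible family of maps $\Sigma^{p}y\otimes\Delta^{k}\to x^{k}$ in $\mX$, equipped with the required nullhomotopies, corresponds up to homotopy to a point of the appropriate partial totalization of $\Map_{\mX}(\Sigma^{p}y,\xu)$; as the relevant mapping spaces are Kan complexes and Reedy fibrant replacement is a levelwise weak equivalence, the existence of such data is not affected. The lifting step here is the dual of Lemma \ref{lema.4} and Corollary \ref{cora.5}. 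For a genuinely homotopy-coherent $\xu$ the coherence homotopies play the same role they do in \S \ref{cdsscso}, but after the rectification of \S \ref{def7.2} they have been absorbed into $\zu$, so nothing further is needed.

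With $\zu$ Reedy fibrant, the base case $r=1$ is vacuous (a representative of $\fd_{1}\lra{f}$ exists as soon as the \S \ref{exam7.4} data does, there are no $F^{k}$ to produce, and $\lra{f}$ always lies on the $E_{1}$-page), so the content is in the inductive step. For this I would use the standard description of the spectral sequence of the exact couple attached to a tower of fibrations: $\lra{f}$ survives to $E_r$ if and only if it survives to $E_{r-1}$ and, for some choice of the liftings witnessing this, $\fd_{r-1}\lra{f}=0$. Given a witness $g^{n+1},\dotsc,g^{n+r-1}$ and compatible nullhomotopies $F^{n+1},\dotsc,F^{n+r-2}$ for survival to $E_{r-1}$ (which exists by the inductive hypothesis), Construction \ref{con7.3} identifies $\fd_{r-1}\lra{f}$ with the class of $g^{n+r-1}$, which restricts to $0$ on $\Sigma^{p}y\otimes\partial\Delta^{n+r-1}$; hence $\fd_{r-1}\lra{f}$ vanishes precisely when $g^{n+r-1}$ admits a nullhomotopy $F^{n+r-1}$ that is $0$ on every facet of $\Delta^{n+r}$ except the $0$-facet, and the Reedy fibrancy of $\zu$ is what allows such a compatible nullhomotopy to be chosen, just as in the last paragraph of \S \ref{con4.2}. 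Post-composing $F^{n+r-1}$ with the cofaces $d^{i}\colon x^{n+r-1}\to x^{n+r}$ as in \S \ref{con7.3} produces $g^{n+r}$, which again restricts to $0$ on the boundary and represents $\fd_{r}\lra{f}$. Reading this argument in both directions yields the equivalence at stage $r$.

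The main obstacle is not a new idea but the bookkeeping of indeterminacy: one must verify that ``survives to $E_r$'', a property of the spectral sequence, matches the existence of a \emph{single} coherent chain $(g^{k},F^{k})$, and, relatedly, that the nullhomotopy $F^{n+r-1}$ witnessing $\fd_{r-1}\lra{f}=0$ can always be taken compatible with the tower (supported on the $0$-facet). Both points are handled by the long exact sequences of the fibration sequences of \S \ref{con7.1} together with Reedy fibrancy, exactly as in the simplicial discussion of \S \ref{con4.2}.
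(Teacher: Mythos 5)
Your proposal follows essentially the same route as the paper's (very terse) proof: reduce to the Reedy fibrant case, induct on $r$, use the dual of Lemma \ref{lem4.1} (equivalently, the long exact sequences of the fibration sequences of \S \ref{con7.1}) to choose the nullhomotopy $F^{n+r-1}$ compatibly so that it is supported on the $0$-facet, and produce the next representative from Construction \ref{con7.3}. The one small detail you gloss over, which the paper handles explicitly via the collapse map $\phi_{n+2}\colon\Delta^{n+2}\to\tr(\Delta^{n+2})$, is converting the representative of $\fd_{r}\lra{f}$ naturally defined on the triangulated simplex $\tr(\Delta^{n+r})$ into the map $g^{n+r}$ out of $\Delta^{n+r}$ itself that the statement requires.
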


\begin{proof}
  We will only prove the statement for $r = 2$ in the Reedy fibrant case: by hypothesis, we have a nullhomotopy $F^{n+1}\colon C(\Delta^{n+1}) \rightarrow x^{n+1}$ of $g^{n+1}$. By the dual of Lemma \ref{lem4.1}, we can assume that $F^{n+1}$ factors through $\Omega^{n}N^{n+1}(\xu)$ and $\fd_{2}\lra{f}$ is represented by
  $(g')^{n+2}\colon\Sigma^{p-1}y \otimes \tr(\Delta^{n+2}) \rightarrow x^{n+2}$, so that the interior cone point is zero and the interior $n+1$-simplices are nullhomotopies of $d^{i} \circ g^{n+1}$. The $i$-th `outer face' gets identified with $d^{i} \circ g^{n+1}$. There is a map $\phi_{n+2}\colon\Delta^{n+2} \rightarrow\tr(\Delta^{n+2})$ that `forgets' the inner n+2-simplices; precomposition with $\phi_{n+2}$ gives the required map $g^{n+2}$. The general case is proved similarly, by induction.
\end{proof}

\end{document}